\numberwithin{equation}{section}
\theoremstyle{plain}
\newtheorem{theorem}[equation]{Theorem}
\newtheorem{corollary}[equation]{Corollary}
\newtheorem{prop}[equation]{Proposition}
\newtheorem{lemma}[equation]{Lemma}
\newtheorem{conj}[equation]{Conjecture}
\theoremstyle{definition}
\newtheorem{dfn}[equation]{Definition}
\theoremstyle{remark}
\newtheorem{remark}[equation]{Remark}
\theoremstyle{remark}
\newenvironment{enumalph}
{\begin{enumerate}}
{\end{enumerate}}
\newenvironment{enumroman}
{\begin{enumerate}
}
{\end{enumerate}}
\newcommand{\defi}[1]{\textsf{#1}} 	
\newcommand{\F}{\mathbb F}
\newcommand{\R}{\mathbb R}
\newcommand{\Or}{\mathcal{O}}
\newcommand{\calO}{\Or}
\newcommand{\Z}{\mathbb Z}
\newcommand{\frakp}{\mathfrak{p}}
\newcommand{\p}{\mathfrak{p}}
\newcommand{\q}{\mathfrak{q}}
\newcommand{\n}{\mathfrak{n}}
\DeclareMathOperator{\Cl}{Cl}
\DeclareMathOperator{\Ma}{M}
\DeclareMathOperator{\disc}{disc}
\DeclareMathOperator{\codiff}{codiff}
\DeclareMathOperator{\Gal}{Gal}
\DeclareMathOperator{\nrd}{nrd}
\DeclareMathOperator{\trd}{trd}
\DeclareMathOperator{\opchar}{char}
\DeclareMathOperator{\ofrac}{Frac}
\DeclareMathOperator{\rad}{rad}
\DeclareMathOperator{\Id}{Id}
\DeclareMathOperator{\Clf}{Clf}
\DeclareMathOperator{\Frac}{Frac}
\DeclareMathOperator{\discrd}{discrd}
\newcommand{\sfN}{\mathsf{N}}
\begin{document}

\title{On basic and Bass quaternion orders}

\author{Sara Chari}
\address{Department of Mathematics, Dartmouth College, 6188 Kemeny Hall, Hanover, NH 03755}
\email{sara.chari.GR@dartmouth.edu}
\urladdr{\url{https://www.math.dartmouth.edu/~schari/}}

\author{Daniel Smertnig}
\address{Department of Pure Mathematics, University of Waterloo, Waterloo, ON, Canada N2L 3G1}
\email{dsmertni@uwaterloo.ca}
\urladdr{\url{https://www.math.uwaterloo.ca/~dsmertni/}}

\author{John Voight}
\address{Department of Mathematics, Dartmouth College, 6188 Kemeny Hall, Hanover, NH 03755}
\email{jvoight@gmail.com}
\urladdr{\url{http://www.math.dartmouth.edu/~jvoight/}}

\maketitle

\begin{abstract}
A quaternion order $\calO$ over a Dedekind domain $R$ is Bass if every $R$-superorder is Gorenstein, and $\calO$ is basic if it contains an integrally closed quadratic $R$-order.  In this article, we show that these conditions are equivalent in local and global settings: a quaternion order is Bass if and only if it is basic.  In particular, we show that the property of being basic is a local property of a quaternion order.
\end{abstract}

\section{Introduction}

Orders in quaternion algebras over number fields arise naturally in many contexts in algebra, number theory, and geometry---for example, in the study of modular forms and automorphic representations and as endomorphism rings of abelian varieties.  In the veritable zoo of quaternion orders, authors have distinguished those orders having favorable properties, and as a consequence there has been a certain proliferation of terminology.  In this article, we show that two important classes of orders coincide, tying up a few threads in the literature.

\subsection*{Setup}

Let $R$ be Dedekind domain and let $F$ be its field of fractions.  Let $B$ be a quaternion algebra over $F$, and let $\calO \subseteq B$ be an $R$-order.  We say that $\calO$ is \defi{Gorenstein} if its codifferent (with respect to the reduced trace pairing) is an invertible (two-sided) $\calO$-module.  Gorenstein orders were studied by Brzezinski \cite{Brzezinski:Gororder}, and they play a distinguished role in the taxonomy of quaternion orders---as Bass notes, Gorenstein rings are ubiquitous \cite{Bass:ubiquity}.  Subsequent to his work, and given the importance of the Gorenstein condition, we say $\calO$ is \defi{Bass} if every $R$-superorder $\calO' \supseteq \calO$ in $B$ is Gorenstein.  As Bass himself showed, Bass orders enjoy good structural properties, and for many applications they provide a pleasant combination of generality and graspability.  A Bass order is Gorenstein, but not always conversely.  Being Gorenstein or Bass is a local property over $R$, because invertibility is so.  

On the other hand, we say that $\calO$ is \defi{basic} if there is a (commutative) quadratic $R$-algebra $S \subseteq \calO$ such that $S$ is integrally closed in its total quotient ring $FS$.  Basic orders were first introduced by Eichler \cite{Eichler:untersuch} over $R=\Z$ (who called them \emph{primitive}), and studied more generally by Hijikata--Pizer--Shemanske \cite{HPS:orders} (among their \emph{special} orders), Brzezinski \cite{Brzezinski:onaut}, and more recently by Jun \cite{Jun}.  The embedded maximal quadratic $R$-algebra $S$ allows one to work explicitly with them, since a basic order $\calO$ is locally free over $S$ of rank $2$: for example, this facilitates the computation of the relevant quantities that arise in the trace formula \cite{HPS}.  Locally, basic orders also appear frequently: local Eichler orders are those that contain $R \times R$, and local Pizer (residually inert) orders \cite[\S 2]{Pizer} are those orders in a division quaternion algebra that contain the valuation ring of an unramified quadratic extension.  However, it is not immediate from the definition that being basic is a local property.  

\subsection*{Results}

The main result of this article is to show these two notions of Bass and basic coincide, in both local and global settings.  We first consider the local case.

\begin{theorem} \label{thm:thm1}
Let $R$ be a discrete valuation ring \textup{(}DVR\textup{)} and let $\calO$ be a quaternion $R$-order.  Then $\calO$ is Bass if and only if $\calO$ is basic.  
\end{theorem}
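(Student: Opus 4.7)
The proof splits naturally into two implications, of which basic $\Rightarrow$ Bass is the easier.

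For basic $\Rightarrow$ Bass, I first note that being basic is preserved under passage to superorders: an integrally closed $S \subseteq \calO$ remains integrally closed inside any $\calO' \supseteq \calO$. It therefore suffices to show that every basic order is Gorenstein. Since $R$ is a DVR (hence a PID) and $S$ is an $R$-order of rank $2$, the $S$-module $\calO$ is free of rank $2$, so one may write $\calO = S \oplus S\mu$ for some $\mu \in \calO$. Choosing an $R$-generator $s$ of $S$ and using the $R$-basis $\{1, s, \mu, s\mu\}$, one computes the dual basis under the reduced trace pairing and identifies the codifferent as a principal two-sided $\calO$-ideal, which is in particular invertible.

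For Bass $\Rightarrow$ basic, the main tool is the radical idealizer chain. Set $\calO_0 = \calO$, $J_i = \rad(\calO_i)$, and $\calO_{i+1} = \{x \in B : x J_i \subseteq J_i\}$. Each $\calO_i$ is Bass, the chain strictly ascends, and it terminates at a hereditary order $\calO_n$. Local hereditary quaternion orders over $R$ are classified: either $\calO_n$ is an Eichler order containing $R \times R$ (when $B$ is split) or $\calO_n$ is the unique maximal order of a division quaternion algebra, which contains the valuation ring of the unramified quadratic extension of $F$. In either case $\calO_n$ is basic, giving the base case for a downward induction along the chain.

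The main obstacle is the descent step: if $\calO_{i+1}$ is basic, produce an integrally closed quadratic $R$-subalgebra of $\calO_i$ (which need not contain the same $S$ as $\calO_{i+1}$ does). The plan is to exploit the semisimple quotient $\calO_i/J_i$: as a $k$-algebra of $k$-dimension at most $4$ (where $k$ is the residue field), and under the Gorenstein constraint, it must be one of $k$, $k \times k$, a quadratic field extension $k'/k$, or $M_2(k)$ (the last corresponding to $\calO_i$ hereditary, handled in the base case). In each remaining case one identifies an element of $\calO_i/J_i$ generating a separable quadratic $k$-subalgebra and lifts it, via Hensel-type arguments applied to its characteristic polynomial, to an element $u \in \calO_i$ such that $R[u]$ is integrally closed. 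Iterating the descent along all steps of the chain then yields the desired integrally closed quadratic suborder of $\calO = \calO_0$.
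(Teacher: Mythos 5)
Your forward direction (basic $\Rightarrow$ Bass) is a different route from the paper's. The paper argues by the contrapositive using the ternary-form dictionary: if $\calO$ is not Gorenstein, all coefficients of the associated form lie in $\frakp$, and then Lemma~\ref{lem2} shows $\calO$ cannot be basic. You instead propose to show directly that $\calO$ free of rank $2$ over an integrally closed quadratic $S$ has principal (hence invertible) codifferent. This is plausible but is currently only a sketch: the codifferent is defined via the \emph{reduced} trace on $B/F$, not the trace of $\calO/S$, and relating the $R$-dual basis of $\{1,s,\mu,s\mu\}$ to a single two-sided generator is exactly the content that needs proof. If you can carry it out, it is a legitimate alternative to the paper's argument.

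The backward direction (Bass $\Rightarrow$ basic) has a genuine gap at the heart of the matter. Your descent step proposes, for each $\calO_i$ in the radical idealizer chain, to find a separable quadratic $k$-subalgebra of $\calO_i/\!\rad\calO_i$ and Hensel-lift it. But in the residually ramified case $\calO_i/\!\rad\calO_i \cong k$, there is no such subalgebra: the quotient is just $k$, and every element of $\calO_i$ reduces to a scalar. Hensel lifting from the residue ring produces nothing here. This is not a corner case---it is precisely the only hard case. For maximal, Eichler (residually split), and residually inert orders the paper reads off a suitable $S$ directly from the quotient (Lemma~\ref{lem:onlyresram}), with no chain needed at all; the entire difficulty is the residually ramified case. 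The paper handles it either by an explicit normal form for the ternary quadratic form and construction of a non-Gorenstein superorder (Lemma~\ref{lem1}, Theorem~\ref{thm1}), or via a length computation combined with $2$-generation of the radical (Theorem~\ref{thm:2gen-basic}, Proposition~\ref{prop:overorders-2gen}). Your proposal as written does not engage with the residually ramified case at all, and without a new idea there the induction along the chain cannot close. (Also note: your descent step, as described, never actually uses the inductive hypothesis that $\calO_{i+1}$ is basic, which is a symptom that the chain is doing no work in your argument.)
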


Theorem \ref{thm:thm1} was proven by Brzezinski \cite[Proposition 1.11]{Brzezinski:onaut} when $R$ is a complete DVR with $\opchar R \neq 2$ and having perfect residue field; the proof relies on a lengthy (but exhaustive) classification of Bass orders.  We present two (essentially self-contained) proofs that is uniform in the characteristic, one involving the manipulation of ternary quadratic forms and the second exploiting the structure of the radical.

Next, we turn to the global case.

\begin{theorem} \label{thm:thm2}
Let $R$ be a Dedekind domain whose field of fractions is a number field, and let $\calO$ be a quaternion $R$-order.  The following statements hold.
\begin{enumalph}
\item $\calO$ is basic if and only if the localization $\calO_{(\frakp)}$ is basic for all primes $\frakp$ of $R$.  
\item $\calO$ is Bass if and only if $\calO$ is basic.  
\end{enumalph}
\end{theorem}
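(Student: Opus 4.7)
Part (b) reduces immediately to part (a) via Theorem \ref{thm:thm1}, since being Bass is a local property (invertibility of the codifferent is local, and the property of being Bass adds the still-local condition of Gorensteinness for every superorder). Hence $\calO$ is Bass iff each $\calO_{(\frakp)}$ is Bass, iff each $\calO_{(\frakp)}$ is basic by Theorem \ref{thm:thm1}, iff $\calO$ is basic by part (a). It thus suffices to prove (a).

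For (a), the forward direction is immediate: given an integrally closed quadratic $R$-subalgebra $S \subseteq \calO$, its localization $S_{(\frakp)} \subseteq \calO_{(\frakp)}$ remains integrally closed in $F_{(\frakp)}S$, since localization commutes with integral closure for finitely generated integral extensions over a Dedekind domain. The substantive direction is the reverse: assume $\calO_{(\frakp)}$ is basic at every prime, and produce a global integrally closed quadratic $R$-subalgebra of $\calO$.

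The plan is to produce a single global commutative quadratic \'etale $F$-algebra $E$ together with an $R$-algebra embedding $\Or_E \hookrightarrow \calO$; the image is then the desired subalgebra. First select $E$: at the finite set $T$ of primes $\frakp$ where $\calO_{(\frakp)}$ is non-maximal, the basic hypothesis prescribes a nonempty set of allowed isomorphism classes for $E \otimes_F F_{(\frakp)}$, and at the (finite) ramified primes of $B$, $E$ must remain a field locally. By weak approximation applied to square classes $F^\times / F^{\times 2}$ (with standard modifications in residue characteristic two), a global $E$ meeting these finite local constraints exists. By construction, $E$ admits local embeddings $\Or_E \otimes_R R_{(\frakp)} \hookrightarrow \calO_{(\frakp)}$ at primes in $T$; at primes outside $T$, local embeddings exist automatically because $\calO_{(\frakp)}$ is maximal and therefore embeds the maximal order of any locally embeddable quadratic algebra (by Noether--Skolem, after conjugating the local embedding).

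The main obstacle is gluing the family of local embeddings into a global optimal embedding $\Or_E \hookrightarrow \calO$: this is a local-to-global principle for optimal embeddings of quadratic orders into a quaternion order. It is classical for maximal orders (via Eichler's theorem on the reduced norm) and extends to Eichler orders; the technical heart of the proof here is to establish the analogous principle for (locally) Bass orders. If a selectivity-type obstruction presents itself for a given $E$, the plan is to use the flexibility in the choice of $E$ above to modify it at primes outside $T$, preserving the local constraints while evading the obstruction.
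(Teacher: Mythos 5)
Your reduction of part (b) to part (a) via Theorem~\ref{thm:thm1} and the locality of the Bass property is correct and matches the paper, as does the easy direction of (a). The overall outline for the hard direction of (a) --- build a global quadratic $F$-algebra $E$ satisfying the finitely many local constraints, show its maximal order embeds locally, then glue to a global embedding while evading selectivity --- is the right shape, and in spirit agrees with what the paper does. However, the proposal has two genuine gaps.

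The serious gap is the gluing step. You assert a local-to-global principle for optimal embeddings of the maximal order $\Or_E$ into $\calO$ and frame the ``technical heart'' as extending the Eichler/Eichler-order case to Bass orders. That framing hides the real obstacle: the dichotomy is not maximal vs.\ Bass but \emph{indefinite vs.\ definite} $B$. When $B$ is indefinite, strong approximation applies and the desired local-to-global statement (with the finite selectivity exceptions) follows from standard results (the paper cites Voight, Main Theorem 31.1.7; your selectivity-evasion idea then works as in Lemma~\ref{lem5}). But when $B$ is definite, strong approximation fails and there is no such local-to-global principle for embeddings. The paper instead recasts the embedding question as a representation question for the ternary discriminant form on a sublattice of $\calO$ and invokes the Blomer--Harcos theorem on representations by definite ternary quadratic forms over number fields (Theorem~\ref{thm3} and Lemma~\ref{lem8}), which is a deep analytic input with no substitute in your outline. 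Without this, the definite case is unproven, and it is not a corner case: over $\Q$ it is exactly Eichler's original setting.

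A second, smaller gap: the theorem is stated for an arbitrary Dedekind domain $R$ with number-field fraction field, not just the ring of integers or a ring of $T$-integers for finite $T$. Your proposal implicitly works over a finitely generated ring of $T$-integers (your weak approximation and Chebotarev-style arguments need a finite set of constrained places and a class group). The paper handles the general case by a separate reduction (Section~\ref{sec:localizationsthm2}, Lemma~\ref{l:redfin}): one produces a Bass $R'$-order $\calO'$ over a smaller $T$-integer ring $R'$ with $\calO' R = \calO$, agreeing with $\calO$ at the primes of $R$ and maximal at the extra primes of $R'$, then pushes the embedded quadratic order back up. You need some such reduction to cover the full statement. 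Finally, as a minor point, selecting $E$ by weak approximation on $F^\times/F^{\times 2}$ gives control of $E \otimes F_\frakp$ but not directly of the discriminant of $\Or_E$ at the bad primes including those above $2$; the paper's more careful construction (Lemma~\ref{lem4}) uses CRT and the Chebotarev density theorem in a ray class field of conductor divisible by high powers of $2$ to ensure the global discriminant has the right local square-class and valuation at each bad prime simultaneously.
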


Theorem \ref{thm:thm2}(b) over $R=\Z$ was proven by Eichler \cite[Satz 8]{Eichler:untersuch}.  For the hard direction, showing that Bass orders are basic, Eichler considered the ternary quadratic form $Q(x,y,z)=\trd^2(x,y,z) - 4\nrd(x,y,z)$ given by the quadratic discriminant on the trace zero space.
He showed that, after dividing out a suitable constant, this form represents a primitive binary quadratic form, which in turn represents infinitely many primes.
From this he was able to conclude that $Q$ represents the discriminant of a quadratic number field.
Our proof differs in that we reduce to the local situation by first proving Theorem \ref{thm:thm2}(a).

We also prove the conclusions of Theorem 1.2 in a large number of cases in which $R$ is a Dedekind domain whose field of fractions is a global function field. In this case we can always write $R = R_{(T)}$, with $T$ a non-empty (possibly infinite) set of places and $R_{(T)}$ the ring of $T$-integers (functions allowing poles at $T$).
We show that Theorem \ref{thm:thm2} holds if either
\begin{itemize}
    \item $\#T=\infty$, or
    \item $\#T<\infty$ and $B$ is $T$-indefinite (i.e., there exists $v \in T$ that is unramified in $B$).
\end{itemize}
When $\#T<\infty$ and $B$ is $T$-definite, we lack a sufficiently general local--global result on representations by ternary quadratic forms: see Section~\ref{sec:weak}. 

In fact, we show that if $\calO$ is Bass (equivalently, basic), then $\calO$ contains \emph{infinitely many} nonisomorphic quadratic $R$-algebras $S$ and moreover they can be taken to be free as $R$-modules (Corollary \ref{cor:infinitelymany}).

If $R$ is a discrete valuation ring (DVR), several equivalent characterizations of Bass orders are known \cite[Proposition 24.5.3]{Voight:quat} and this list is further extended by our results.
For the reader's convenience we give a comprehensive list.

\begin{corollary} \label{cor:local-bass}
  Let $R$ be a DVR with maximal ideal $\p$, and let $\Or$ be a quaternion $R$-order.
  Then the following are equivalent.

  \begin{enumroman}
  \item \label{lb:bass} $\Or$ is a Bass order;
  \item \label{lb:idealizer} $\Or$ and the radical idealizer $\Or^{\natural} = O_{\mathsf L}(\rad \Or) = O_{\mathsf R}(\rad \Or)$ are Gorenstein orders;
  \item \label{lb:radical} The Jacobson radical $\rad \Or$ is generated by two elements \textup{(}as left, respectively, right ideal\textup{)};
  \item \label{lb:basic} $\Or$ is a basic order;
  \item \label{lb:twogen} Every $\Or$-ideal is generated by two elements;
  \item \label{lb:directsum} Every $\Or$-lattice is isomorphic to a direct sum of $\Or$-ideals; and
  \item \label{lb:satz8} $\Or$ is not of the form $\Or = R + \p I$ with an integral $R$-lattice $I$.
  \end{enumroman}
\end{corollary}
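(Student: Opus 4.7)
My plan is to assemble the corollary from two ingredients, one classical and one new. Six of the seven conditions, namely \ref{lb:bass}, \ref{lb:idealizer}, \ref{lb:radical}, \ref{lb:twogen}, \ref{lb:directsum}, and \ref{lb:satz8}, are already known to be equivalent by \cite[Proposition 24.5.3]{Voight:quat}, as flagged in the paragraph preceding the statement. I would simply cite this to obtain the chain
\ref{lb:bass} $\Leftrightarrow$ \ref{lb:idealizer} $\Leftrightarrow$ \ref{lb:radical} $\Leftrightarrow$ \ref{lb:twogen} $\Leftrightarrow$ \ref{lb:directsum} $\Leftrightarrow$ \ref{lb:satz8}, without attempting to re-derive those implications here.

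To close the corollary it then suffices to splice in condition \ref{lb:basic}. This is precisely the content of Theorem \ref{thm:thm1}: a quaternion order over a DVR is Bass if and only if it is basic. Appending the single equivalence \ref{lb:bass} $\Leftrightarrow$ \ref{lb:basic} to the classical chain above produces a single equivalence class containing all seven conditions.

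The corollary itself is thus a brief assembly; the substantive work resides entirely in Theorem \ref{thm:thm1}, which the paper establishes in two independent ways (via ternary quadratic forms, and via the radical), uniformly in the residue characteristic. The only subtlety in writing out the corollary is to verify that \cite[Proposition 24.5.3]{Voight:quat} is being invoked in a setting that matches ours — an arbitrary DVR $R$, with no hypothesis on completeness, perfectness of the residue field, or residue characteristic. This appears to be exactly the generality in which that proposition is stated, so no adaptation is needed and the proof reduces to a one-line combination of the two references.
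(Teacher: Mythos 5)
Your assembly hinges on an unverified reading of \cite[Proposition 24.5.3]{Voight:quat}. The introduction says only that ``several equivalent characterizations of Bass orders are known'' from that reference, not that six of the seven conditions are already listed there. In fact the paragraph immediately following the corollary in the paper states: ``With the exception of statement \ref{lb:directsum}, we therefore give a full proof of the equivalences in Corollary~\ref{cor:local-bass}.'' In the paper's actual proof, the cited proposition is invoked only for \ref{lb:bass}~$\Leftrightarrow$~\ref{lb:directsum}; everything else is constructed from the tools built in sections \ref{sec:locbass1}--\ref{sec:variant}. Condition \ref{lb:satz8} in particular is established via Lemma~\ref{lem:local-satz8} as ``a local version of a result of Eichler,'' which would be pointless if it were already available in the cited source, and conditions \ref{lb:idealizer} and \ref{lb:radical} enter via the new Proposition~\ref{prop:overorders-2gen} and Theorem~\ref{thm:2gen-basic}. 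So the gap is concrete: you have not shown that \ref{lb:idealizer}, \ref{lb:radical}, \ref{lb:twogen}, \ref{lb:satz8} are equivalent to \ref{lb:bass} before appending \ref{lb:basic}, and the paper's text does not license deriving all of these from the single citation.

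The paper's route is: \ref{lb:bass}~$\Rightarrow$~\ref{lb:idealizer} is immediate from the definition of Bass; \ref{lb:idealizer}~$\Rightarrow$~\ref{lb:radical} is Proposition~\ref{prop:overorders-2gen}; \ref{lb:radical}~$\Rightarrow$~\ref{lb:basic} is Theorem~\ref{thm:2gen-basic} in the residually ramified case together with Lemma~\ref{lem:onlyresram} for the other cases; \ref{lb:basic}~$\Rightarrow$~\ref{lb:bass} is Proposition~\ref{prop:onedir}; \ref{lb:basic}~$\Rightarrow$~\ref{lb:twogen} uses that every $\Or$-ideal is a free rank-two module over the local maximal quadratic suborder $S$; \ref{lb:twogen}~$\Rightarrow$~\ref{lb:radical} is trivial; \ref{lb:basic}~$\Leftrightarrow$~\ref{lb:satz8} is Lemma~\ref{lem:local-satz8}; and only \ref{lb:bass}~$\Leftrightarrow$~\ref{lb:directsum} is outsourced to \cite[Proposition 24.5.3]{Voight:quat}. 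Theorem~\ref{thm:thm1} alone gives the single edge \ref{lb:bass}~$\Leftrightarrow$~\ref{lb:basic}; it does not by itself close the corollary unless you independently know the other six coincide, and establishing that is precisely the work of sections \ref{sec:locbass1} and \ref{sec:variant}.
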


The implications \ref*{lb:twogen}$\,\Rightarrow\,$\ref*{lb:bass}$\,\Rightarrow\,$\ref*{lb:directsum} hold more generally \cite[Section 14.5]{Voight:quat}.
The implication \ref*{lb:directsum}$\,\Rightarrow\,$\ref*{lb:twogen} holds only in specific settings; for quaternion orders it follows from work of Drozd-- Kiri\v{c}enko--Ro\u{i}ter \cite[Proposition 12.1, 12.5]{DKR}.
While we do not give another proof of this implication, we provide a direct proof for \ref*{lb:bass}$\,\Rightarrow\,$\ref*{lb:twogen}.
With the exception of statement \ref*{lb:directsum}, we therefore give a full proof of the equivalences in Corollary~\ref{cor:local-bass}.

\subsection*{Outline}

The paper is organized as follows.  After introducing the relevant background in section \ref{sec:background}, we prove Theorem \ref{thm:thm1} and Corollary~\ref{cor:local-bass} in sections \ref{sec:locbass1}--\ref{sec:variant}.  In section \ref{sec:strongapprox}, we prove Theorem \ref{thm:thm2} in the case when strong approximation applies; in section \ref{sec:weak}, we prove Theorem \ref{thm:thm2} for definite orders over $R$ the ring of integers in a number field; we then conclude the proof of Theorem \ref{thm:thm2} in general in section \ref{sec:localizationsthm2}.

\subsection*{Acknowledgements}
The authors would like to thank Wai Kiu Chan and Tom Shemanske for helpful conversations.  Voight was supported by an NSF CAREER Award (DMS-1151047) and a Simons Collaboration Grant (550029).  Smertnig was supported by the Austrian Science Fund (FWF) project J4079-N32.  Part of the research was conducted while Smertnig was visiting Dartmouth College; he would like to extend his thanks for their hospitality.

\section{Background} \label{sec:background}

In this section, we briefly review the necessary background on orders and quadratic forms.  For a general reference, see Voight \cite{Voight:quat}.

\subsection*{Properties of quaternion orders}

Let $R$ be a Dedekind domain with $\ofrac(R)=F$.  Let $B$ be a quaternion algebra over $F$ and let $\calO \subseteq B$ be an $R$-order.

\begin{dfn} We say that $\Or$ is \defi{Gorenstein} if the codifferent 
\[ \codiff(\Or) \colonequals \{\alpha \in B: \trd(\alpha \Or) \subseteq R\} \subseteq B \] 
is invertible, and we say $\Or$ is \defi{Bass} if every $R$-superorder $\Or' \supseteq \Or$ is Gorenstein. 
\end{dfn}

For more detail and further references, see Voight \cite[\S\S 24.2, 24.5]{Voight:quat}.
Being Gorenstein is a local property---$\calO$ is Gorenstein if and only if the localizations $\calO_{(\frakp)} \colonequals \calO \otimes_R R_{(\frakp)}$ are Gorenstein for all primes $\frakp$ of $R$---so it follows that Bass is also a local property.

\begin{dfn}
We say that $\calO$ is \defi{basic} if there is a (commutative) quadratic $R$-algebra $S \subseteq \calO$ such that $S$ is integrally closed in its total quotient ring $FS$.
\end{dfn}

\begin{remark}
The term \emph{primitive} is also used (in place of basic), but it is potentially confusing: we will see below that a primitive ternary quadratic form corresponds to a Gorenstein order, not a ``primitive'' order.  
\end{remark}

\subsection*{Local properties}

Now suppose $R$ is a local Dedekind domain, i.e., $R$ is a discrete valuation ring (DVR) with maximal ideal $\frakp$ and residue field $\kappa \colonequals R/\frakp$.

The \defi{Jacobson radical} of $\Or$ is the intersection of all maximal left (or equivalently right) ideals of $\Or$.  By classification, the semisimple $\kappa$-algebra $\Or/\!\rad \Or$ is one of the following \cite[24.3.1]{Voight:quat}:
\begin{itemize}
\item $\Or/\!\rad \Or$ is a quaternion algebra (equivalently, $\Or$ is maximal);
\item $\Or/\!\rad \Or \simeq \kappa \times \kappa$, and we say that $\Or$ is \defi{residually split} (or \defi{Eichler});
\item $\Or/\!\rad \Or \simeq \kappa$, and we say that $\Or$ is \defi{residually ramified}; or
\item $\Or/\!\rad \Or$ is a separable quadratic field extension of $\kappa$ and we say that $\Or$ is \defi{residually inert}.
\end{itemize}

The \defi{radical idealizer} of $\Or$ is the left order $\Or^\natural \colonequals \Or_L(\rad \Or)$.  We can create a chain of orders by iterating this process, for $\Or \subseteq \Or^\natural$, necessarily terminating in a hereditary order.

\subsection*{Ternary quadratic forms}

Still with $R$ a DVR, we review the correspondence between quaternion orders and ternary quadratic forms.

\begin{prop}[Gross--Lucianovic] \label{prop:bijOQO}
There is a discriminant-preserving bijection $\calO \leftrightarrow Q(\calO)$ between 
\begin{center}
    quaternion $R$-orders, up to isomorphism
\end{center} 
and 
\begin{center}
    nondegenerate ternary quadratic forms over $R$, up to similarly. 
\end{center} 
Moreover, an $R$-order $\calO$ is Gorenstein if and only if the corresponding ternary quadratic form $Q(\calO)$ is primitive.  
\end{prop}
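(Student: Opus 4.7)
The plan is to exhibit the bijection via mutually inverse functors and then check discriminant preservation and the Gorenstein/primitive correspondence. In one direction, to each nondegenerate ternary quadratic module $(M,Q)$ I attach the even Clifford algebra $\Clf^0(M,Q)$: a rank count yields a projective rank-$4$ $R$-algebra, and nondegeneracy of $Q$ guarantees that $\Clf^0(M,Q) \otimes_R F$ is a central simple $F$-algebra of dimension $4$, i.e.\ a quaternion algebra; so $\Clf^0(M,Q)$ is a quaternion $R$-order. In the other direction, to a quaternion $R$-order $\calO$ I attach the rank-$3$ projective module $M \colonequals \calO/R$ together with a canonical ternary quadratic form $Q$ coming from the reduced norm. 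The subtlety is that $\nrd$ itself is not well-defined modulo $R$, so $Q$ must be produced intrinsically; the cleanest route is to read $Q$ off from the structure constants in any pseudobasis $1,i,j,k$ via the multiplication table for $ij$, $jk$, $ki$, giving a form on $M$ that is well-defined up to similarity.

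That these constructions are mutually inverse I would verify locally at each prime $\p$ of $R$: picking compatible pseudobases on both sides, one checks on structure constants that $\Clf^0$ of the form produced from $\calO$ recovers $\calO$ and that, conversely, the form produced from $\Clf^0(M,Q)$ is similar to $Q$. This localizes and reglues because both functors commute with localization at primes. Discriminant preservation is then a direct Gram-matrix calculation in such a pseudobasis, where $\discrd(\calO)$ equals (the ideal generated by) $\disc(Q)$.

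The final claim---that $\calO$ is Gorenstein if and only if $Q(\calO)$ is primitive---I would also prove locally at $\p$. If $Q$ is imprimitive, say $Q = \varpi Q'$ with $\varpi \in \p$, then identifying $\calO$ with $\Clf^0(M,Q)$ exhibits $\calO$ in the form $R + \p \calO'$ for a proper superorder $\calO' \supsetneq \calO$; such orders are non-Gorenstein, because the codifferent $\codiff(\calO)$ then acquires a nontrivial factor of $\p$ and fails to be invertible as a two-sided $\calO$-module. Conversely, if $Q$ is primitive, the Gram matrix of the associated bilinear form has entries generating the unit ideal at $\p$, and a direct computation expresses $\codiff(\calO)$ as an invertible (in fact locally principal) two-sided $\calO$-ideal.

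The main obstacle is producing the ternary form attached to $\calO$ uniformly in all characteristics, particularly in residue characteristic $2$: the trace-zero sublattice is not a rank-$3$ direct summand there, so one cannot simply diagonalize, and the form on $\calO/R$ must be built so as to be $R$-valued rather than merely $\tfrac{1}{2}R$-valued. This is the technical heart of the Gross--Lucianovic dictionary; once it is in place, the remaining equivalences and the Gorenstein/primitive correspondence reduce to pseudobasis-level verifications that can be carried out locally and glued.
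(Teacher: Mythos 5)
The paper does not prove this proposition; it simply cites Gross--Lucianovic \cite{GL:quatrings} and Voight \cite[Chapters 5, 22]{Voight:quat}. What you have written is therefore a sketch of that external argument, not a comparison to anything in the paper itself. The overall skeleton you describe---even Clifford algebra in one direction, a form read off from the structure constants of a good basis in the other, mutual inversion and discriminant preservation checked on structure constants, with the residue-characteristic-$2$ subtlety flagged---matches what those sources do. (Also note the statement is made with $R$ a DVR, so the pseudobasis/projective language is unnecessary: everything is free.)

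There are, however, two places where the proposal asserts rather than proves. First, your description of $Q$ as ``coming from the reduced norm'' on $\calO/R$ is misleading: in a good basis, the reduced norm on $Ri+Rj+Rk$ is the \emph{adjoint} form of $Q$ (cf.\ \eqref{eqn:trdnrdingoodbasis}), not $Q$ itself; you recover by saying to read $Q$ off the structure constants, which is the correct recipe, but the two are not the same form and the earlier phrasing should be dropped. Second, the Gorenstein $\Leftrightarrow$ primitive equivalence is not actually argued. Your reduction $Q = \varpi Q'$ $\Rightarrow$ $\calO = R + \p\calO'$ for a proper superorder $\calO' = \Clf^0(Q')$ is correct, but the follow-up---``such orders are non-Gorenstein, because the codifferent acquires a nontrivial factor of $\p$''---is not a proof. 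Beware the trap here: $\calO = R + \p J$ with $J$ merely an integral \emph{lattice} only yields that $\calO$ is not \emph{basic} (Lemma~\ref{lem:local-satz8}), which is strictly weaker than non-Gorenstein; the argument must genuinely use that $J=\calO'$ is an \emph{order}, e.g.\ via the Gorenstein saturation $\calO = R + \mathfrak{b}(\calO)\,\mathrm{Gor}(\calO)$ or by an explicit computation of $O_{\mathsf L}(\codiff\calO)$ from the trace Gram matrix. Likewise the converse direction (``a direct computation expresses $\codiff(\calO)$ as an invertible two-sided ideal'') is currently a restatement of the goal; the content of GL/Voight is precisely this Gram-matrix computation, and it needs to be carried out, not announced.
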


Recall that a quadratic form $Q\colon M \to R$ is \defi{primitive} if $Q(M)=R$.

\begin{proof}
See Gross--Lucianovic \cite{GL:quatrings} or Voight \cite[Chapters 5, 22]{Voight:quat}.
\end{proof}

We now briefly review the construction of the bijection in Proposition \ref{prop:bijOQO}.  Since $R$ is a PID, $\calO$ is free of rank $4$ as an $\calO$-module.  A \defi{good basis} $1,i,j,k$ for an $R$-order $\Or$ is an $R$-basis with a multplication table of the form
\begin{equation} \label{eqn:ijkabc}
\begin{aligned}
i^2&=ui-bc & \qquad jk&=a\overline{i}=a(u-i)\\
j^2&=vj-ac & ki&=b\overline{j}=b(v-j)\\
k^2&=wk-ab & ij&=c\overline{k}=c(w-k)
\end{aligned}
\end{equation}
with $a,b,c,u,v,w \in R$.  Every $R$-basis of $\calO$ can be converted to a good basis in a direct manner.  With respect to this basis, for all $x,y,z \in R$ and $\alpha=xi+yj+zk \in \Or$, we have
\begin{equation} \label{eqn:trdnrdingoodbasis}
\begin{aligned}
\trd(\alpha) &=ux+vy+wz \\
\nrd(\alpha) &=
bcx^2+acy^2+abz^2+(uv-cw)xy+(uw-bv)xz+(vw-au)yz
\end{aligned}
\end{equation}

\begin{remark}
The ternary quadratic form arising from the reduced norm in \eqref{eqn:trdnrdingoodbasis} can be recognized as the adjoint form of $Q$.
\end{remark}

Associated to $\calO$ and the good basis, we attach the ternary quadratic form $Q\colon R^3 \to R$ defined by
\begin{equation} \label{eqn:Qabcuvw}
    Q(x,y,z)=ax^2+by^2+cz^2+uyz+vxz+wxy \in R[x,y,z].
\end{equation}
The similarity class of $Q$ is well-defined on the isomorphism class of $\calO$.
Conversely, given a nondegenerate ternary quadratic form $Q\colon R^3 \to R$, we associate to $Q$ its even Clifford algebra $\calO=\Clf^0(Q)$, which is a quaternion $R$-order.  Explicitly, in the standard basis $e_1, e_2, e_3$ in which $Q(xe_1+ye_2+ze_3)=Q(x,y,z)$ is as in \eqref{eqn:Qabcuvw}, then $1,i,j,k$ is a good basis for $\Clf^0(Q)$, where
$$i \colonequals e_2e_3;\quad j \colonequals e_1e_3;\quad k \colonequals e_1e_2$$ 
(in the full Clifford algebra $\Clf(Q)$) and the multiplication table \eqref{eqn:ijkabc} holds.

A change of good basis of $\calO$ induces a corresponding change of basis of $Q$, and conversely every such change of basis of $Q$ arises from a change of good basis of $\calO$.

\section{Locally Bass orders are basic} \label{sec:locbass1}

We now embark on a proof of Theorem \ref{thm:thm1}: locally, an order is Bass if and only if it is basic.  To this end, in this section and the next let $R$ be a DVR with fraction field $F \colonequals \Frac(R)$ and maximal ideal $\frakp=\pi R$.
For $x,y \in R$, we write $\pi \mid x,y$ for $\pi \mid x$ and $\pi \mid y$.

Let $B$ be a quaternion algebra over $F$ and $\calO \subseteq B$ an $R$-order.  
According to the following lemma, we could work equivalently in the completion of $R$.  

\begin{remark}
The order $\calO$ is basic (or Bass) if and only if its completion is basic (or Bass).  Indeed, invertibility and maximality can be checked in the completion. 
First, $\Or$ is basic if and only if it contains an integral quadratic subring that is maximal in its total quotient ring, so the basic property can be checked in the completion.  Similarly, $\Or$ is Bass if and only if the codifferent of every superorder is invertible, so the Bass property can be checked in the completions.  
\end{remark}

We choose a good $R$-basis $1,i,j,k$ for $\calO$ and let $Q$ be the ternary quadratic form over $R$ associated to $\calO$ with respect to this basis, as in \eqref{eqn:Qabcuvw}.

\begin{lemma}\label{lem2} 
The order $\Or$ is \emph{not} basic if and only if for every $\alpha \in \Or$ there exists $r \in R$ such that $\pi \mid \trd(\alpha-r)$ and $\pi^2 \mid \nrd(\alpha-r)$.
\end{lemma}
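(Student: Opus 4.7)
The plan is to reformulate \emph{basic} as a property of principal quadratic subalgebras $R[\alpha]$ and then read the divisibility conditions directly off the integrality equation for $\alpha$.

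First, I would reformulate: $\Or$ is basic if and only if there exists $\alpha \in \Or \setminus R$ such that $R[\alpha]$ is integrally closed in $FR[\alpha]$. Indeed, any basic $R$-subalgebra $S \subseteq \Or$ is free of rank two, hence of the form $S = R + R\alpha$ for some $\alpha \in S \setminus R$, and then $R[\alpha] = R + R\alpha = S$. Negating, the lemma reduces to: for every $\alpha \in \Or \setminus R$, $R[\alpha]$ fails to be integrally closed if and only if there exists $r \in R$ with $\pi \mid \trd(\alpha - r)$ and $\pi^2 \mid \nrd(\alpha - r)$. (When $\alpha \in R$ the condition is trivial: take $r = \alpha$.)

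Next, I translate integrality into the divisibility conditions. For any $\gamma \in FR[\alpha]$, integrality over $R$ is equivalent to $\trd(\gamma), \nrd(\gamma) \in R$, because $\gamma$ satisfies $\gamma^2 - \trd(\gamma)\gamma + \nrd(\gamma) = 0$ and $R$ is integrally closed in $F$. Applied to $\gamma = (\alpha - r)/\pi$, integrality becomes exactly $\pi \mid \trd(\alpha - r)$ and $\pi^2 \mid \nrd(\alpha - r)$. The easy direction ($\Leftarrow$) follows: such a $\gamma$ is integral but clearly does not lie in $R[\alpha] = R + R\alpha$, so $R[\alpha]$ is not integrally closed.

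The substantive direction ($\Rightarrow$) is where the real work is. Let $S$ be the integral closure of $R[\alpha]$ in $FR[\alpha]$; it is a finitely generated $R$-module strictly containing $R[\alpha]$, so $S/R[\alpha]$ is a nonzero finitely generated torsion $R$-module and I can choose $\beta \in S \setminus R[\alpha]$ with $\pi\beta \in R[\alpha]$. Write $\pi\beta = r_0 + r_1 \alpha$ with $r_0, r_1 \in R$ not both in $\pi R$. The main obstacle is to show $r_1 \notin \pi R$: if instead $r_1 = \pi r_1'$, then $\beta - r_1'\alpha = r_0/\pi$ is integral over $R$ (as a difference of two integral elements of the commutative ring $FR[\alpha]$) and lies in $F$, forcing $r_0/\pi \in R$ by integral closedness of $R$ in $F$, hence $\pi \mid r_0$, contradicting our choice. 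Thus $r_1$ is a unit; setting $r \colonequals -r_0 r_1^{-1} \in R$, we get $\pi(r_1^{-1}\beta) = \alpha - r$, so $(\alpha - r)/\pi = r_1^{-1}\beta$ is integral, yielding both divisibility conditions. This closes the equivalence.
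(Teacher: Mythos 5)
Your proof is correct and follows the same route as the paper's: reformulate basic as ``some $R[\alpha]$ with $\alpha \in \Or$ is integrally closed,'' and translate integrality of $\pi^{-1}(\alpha - r)$ into the stated trace and norm divisibilities. You in fact supply more detail than the printed proof, which asserts without argument that a witness to non-integral-closedness of $R[\alpha]$ can be taken of the form $\pi^{-1}(\alpha - r)$ and does not isolate the trivial case $\alpha \in R$; your handling of both points is clean, and the one cosmetic blemish is the appeal to finite generation of the integral closure $S$, which is unnecessary (and in inseparable residue-characteristic-two situations not automatic) --- it suffices that $S/R[\alpha]$ is a nonzero torsion $R$-module, so scaling any element of $S \setminus R[\alpha]$ by the appropriate power of $\pi$ already produces your $\beta$.
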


\begin{proof} 
Let $\alpha \in \Or$ and consider the $R$-algebra $R[\alpha]=R+R\alpha$.  Then $R[\alpha]$ fails to be integrally closed if and only if there exists $\beta \in F[\alpha]$, integral over $R$, such that $\beta \not\in R[\alpha]$; this holds if and only if there exists $r \in R$ such that $\beta=\pi^{-1}(\alpha-r)$ is integral over $R$, which is equivalent to $\trd(\beta)=\pi^{-1}\trd(\alpha-r) \in R$ and $\nrd(\beta)=\pi^{-2}\nrd(\alpha-r) \in R$, as claimed.  
\end{proof}

A slight reformulation gives a local version of the result of Eichler \cite[Satz 8]{Eichler:untersuch}.
Recall that a \defi{semi-order} $I \subseteq B$ is an integral $R$-lattice with $1 \in I$ \cite[Section 16.6]{Voight:quat}.
\defi{Basic} semi-orders are defined analogously to basic orders.

\begin{lemma} \label{lem:local-satz8}
  A semi-order $I$ is \emph{not} basic if and only if it is of the form $I=R + \p J$ with $J \subseteq B$ an integral $R$-lattice.
\end{lemma}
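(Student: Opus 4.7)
The ``if'' direction is a direct verification: if $I = R + \p J$ with $J$ an integral $R$-lattice, then any $\alpha \in I$ can be written $\alpha = r + \pi\beta$ with $r \in R$, $\beta \in J$, whence $\trd(\alpha - r) = \pi\trd(\beta) \in \pi R$ and $\nrd(\alpha - r) = \pi^2 \nrd(\beta) \in \pi^2 R$.  Since the proof of Lemma~\ref{lem2} applies verbatim to semi-orders (for each $\alpha \in I$ one still has $R[\alpha] = R + R\alpha \subseteq I$), this shows $I$ is not basic.

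For the converse, assume $I$ is not basic.  The semi-order analogue of Lemma~\ref{lem2} then yields the criterion $(\ast)$: for every $\alpha \in I$ there exists $r \in R$ with $\pi \mid \trd(\alpha - r)$ and $\pi^2 \mid \nrd(\alpha - r)$.  Since $I/R$ is torsion-free (an element of $F \cap I$ is integral over $R$, hence lies in $R$) and finitely generated, it is free of rank $3$ over the PID $R$; lift an $R$-basis to get $e_1,e_2,e_3 \in I$ so that $1,e_1,e_2,e_3$ is an $R$-basis of $I$.  For each $m$, choose $r_m \in R$ as in $(\ast)$ for $\alpha = e_m$ and set $f_m \colonequals (e_m - r_m)/\pi$, which is integral over $R$.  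Then $J \colonequals R + Rf_1 + Rf_2 + Rf_3$ satisfies
\[
    R + \pi J = R + R(e_1 - r_1) + R(e_2 - r_2) + R(e_3 - r_3) = R + Re_1 + Re_2 + Re_3 = I.
\]

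It remains to show that $J$ is integral.  Since $\nrd(f_m), \trd(f_m) \in R$ by construction, integrality reduces to checking the mixed pairings $\langle f_m, f_n\rangle \in R$ for $m \neq n$, equivalently $\pi^2 \mid \langle e_m - r_m, e_n - r_n\rangle$.  Writing this pairing as $\nrd((e_m - r_m) + (e_n - r_n)) - \nrd(e_m - r_m) - \nrd(e_n - r_n)$, the last two terms lie in $\pi^2 R$ by choice of $r_m, r_n$, so the task is to show $\pi^2 \mid \nrd(e_m + e_n - (r_m + r_n))$.  Applying $(\ast)$ to $e_m + e_n$ gives some $r \in R$ with $\pi^2 \mid \nrd(e_m + e_n - r)$ and $\pi \mid \trd(e_m + e_n - r)$; setting $s = r_m + r_n - r$ and expanding yields
\[
    \nrd(e_m + e_n - (r_m + r_n)) = \nrd(e_m + e_n - r) - s\,\trd(e_m + e_n - r) + s^2,
\]
so one must show $\pi^2 \mid s^2 - s\,\trd(e_m + e_n - r)$.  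Comparing traces forces $\pi \mid 2s$, which in residue characteristic $\neq 2$ gives $s \in \pi R$ and hence the desired divisibility.  The main obstacle is residue characteristic $2$, where $s$ need not lie in $\pi R$; one handles this by further applications of $(\ast)$ to additional combinations of the $e_m$ (such as $e_m - e_n$ or $2(e_m + e_n)$) to pin down $s$ modulo $\pi^2$ and absorb the remaining error.
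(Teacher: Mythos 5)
Your proof follows the same route as the paper's: construct the candidate lattice $J$ from the integral elements $(\alpha - r)/\pi$ and verify $I = R + \pi J$.  You are somewhat more explicit---fixing a finite $R$-basis $1,e_1,e_2,e_3$ of $I$ and checking the Gram matrix of $J$ directly---and you correctly isolate the one step that is not automatic, namely that the mixed pairings $\langle f_m, f_n\rangle$ lie in $R$. (The paper itself takes $J$ to be the lattice generated by \emph{all} the $\beta$'s and does not spell out why that lattice is integral, so your proof is faithful to the paper's approach.)

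That said, the argument as written is incomplete in residue characteristic $2$, and the gap you acknowledge is a real one, not a formality. Comparing traces gives only $\pi \mid 2s$, which is vacuous once $\pi \mid 2$; without $\pi \mid s$, the quantity $s^2 - s\,\trd(e_m+e_n-r)$ has no reason to lie in $\pi^2 R$. Your suggested patch---applying $(\ast)$ to $e_m - e_n$ or $2(e_m+e_n)$---does not obviously help: when $\pi \mid 2$, the element $e_m - e_n$ differs from $e_m + e_n$ only by $2e_n \in \pi I$, and $2(e_m+e_n)$ lies in $\pi I$, so $(\ast)$ applied to either yields essentially the same congruence already in hand. What is actually needed is to feed $(\ast)$ the full one-parameter family $e_m + c\,e_n$ with $c$ ranging over $R$ (not just over $\{\pm 1, \pm 2\}$): the requirement that $V(e_m + c e_n) \neq \emptyset$ for every $c$, examined to second order in the $\p$-adic expansion of $c$, eventually forces $\pi^2 \mid \langle e_m, e_n\rangle$. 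This is a calculation, not a one-liner, and the ``absorb the remaining error'' sentence in your write-up is standing in for it. So: same strategy as the paper, correct identification of the crux, but the residue-characteristic-$2$ case needs the additional argument carried out rather than asserted.
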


\begin{proof}
  If $I = R + \p J$, then $I$ is not basic by the previous lemma.
  Conversely, if $I$ is not basic, by the proof of the previous lemma, each $\alpha \in I$ is of the form $\alpha  - r = \pi \beta$ with an integral $\beta$.
  Take $J$ to be the $R$-lattice generated by all these $\beta$.
\end{proof}

As an immediate application of Lemma \ref{lem2}, we prove one implication in Theorem \ref{thm:thm1}.

\begin{prop} \label{prop:onedir}
If $\calO$ is basic, then $\calO$ is Bass.
\end{prop}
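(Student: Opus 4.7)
My plan is to reduce the proposition to a single local statement via a trivial inclusion, and then dispatch that statement using the ternary quadratic form machinery together with Lemma \ref{lem2}.

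The first step is the following observation: if $S \subseteq \calO$ is an integrally closed quadratic $R$-subalgebra witnessing that $\calO$ is basic, then $S \subseteq \calO'$ for every $R$-superorder $\calO' \supseteq \calO$, so $\calO'$ is itself basic. Consequently, it suffices to prove the more focused claim that \emph{every basic quaternion $R$-order is Gorenstein}, since then this claim applied to each $\calO' \supseteq \calO$ gives $\calO$ Bass.

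I would prove this focused claim by contrapositive. Assume $\calO$ is not Gorenstein, fix a good basis $1,i,j,k$ as in \eqref{eqn:ijkabc}, and let $Q$ be the associated ternary form \eqref{eqn:Qabcuvw}. By Proposition \ref{prop:bijOQO}, $Q$ is not primitive, so the six coefficients $a,b,c,u,v,w$ all lie in $\p$. Given an arbitrary element $\alpha = r_0 + r_1 i + r_2 j + r_3 k \in \calO$, I choose $r = r_0 \in R$. Then $\alpha - r = r_1 i + r_2 j + r_3 k$, and formula \eqref{eqn:trdnrdingoodbasis} yields
\[
\trd(\alpha - r) = u r_1 + v r_2 + w r_3 \in \p,
\]
since $u,v,w \in \p$, while
\[
\nrd(\alpha - r) = bc\, r_1^2 + ac\, r_2^2 + ab\, r_3^2 + (uv - cw) r_1 r_2 + (uw - bv) r_1 r_3 + (vw - au) r_2 r_3 \in \p^2,
\]
because each of the scalar coefficients $bc, ac, ab, uv, cw, uw, bv, vw, au$ is a product of two elements of $\p$. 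By Lemma \ref{lem2}, $\calO$ is not basic, completing the contrapositive.

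There is no serious obstacle here: the entire argument rests on reading off the reduced norm and trace in the good basis and observing that nonprimitivity of $Q$ forces every summand into $\p^2$ or $\p$. The only thing to watch is that the choice $r = r_0$ works uniformly for every $\alpha$, which is exactly what Lemma \ref{lem2} requires for the non-basic direction.
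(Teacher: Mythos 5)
Your proof is correct and takes essentially the same approach as the paper: reduce to showing basic implies Gorenstein (using that any superorder of a basic order is basic), then argue by contrapositive via the good-basis formulas for $\trd$ and $\nrd$ together with Lemma~\ref{lem2}. You simply make explicit the choice $r=r_0$, which the paper leaves implicit.
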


\begin{proof}
Let $\calO$ be basic.  Then every $R$-superorder $\calO'\supseteq \calO$ is also basic.  So to show that $\calO$ is Bass, we may show that $\calO$ is Gorenstein.  To do so, we prove the contrapositive.  Suppose that $\Or$ is not Gorenstein.  Then the quadratic form $Q$ associated to $\calO$ has all coefficients $a,b,c,u,v,w \in \frakp$.  From \eqref{eqn:trdnrdingoodbasis}, we see that for all $\alpha=xi+yj+zk \in \Or$ we have $\pi \mid \trd(\alpha)$ and $\pi^2 \mid \nrd(\alpha)$.  Therefore $\calO$ is not basic by Lemma \ref{lem2}.  
\end{proof}

\begin{lemma} \label{lem:onlyresram}
If $\Or$ is maximal, residually inert, or residually split, then $\Or$ is basic and Bass.
\end{lemma}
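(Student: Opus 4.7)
The plan is to reduce everything to exhibiting an integrally closed quadratic $R$-subalgebra of $\Or$ in each of the three cases, since Proposition~\ref{prop:onedir} already gives that basic implies Bass, so ``Bass'' is automatic once we know ``basic.'' By the remark preceding Lemma~\ref{lem2}, the basic property is unchanged on passing to the completion, so I first replace $R$ by its completion and assume $R$ is a complete DVR. This reduction is essential, because the subsequent case analysis relies on being able to lift idempotents and polynomial roots from $\Or/\rad\Or$ to $\Or$.

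If $\Or$ is maximal, I split on whether $B$ is split. When $B \simeq M_2(F)$, any maximal $R$-order is conjugate to $M_2(R)$, and pulling back the diagonal copy of $R \times R$ gives an integrally closed quadratic $R$-subring of $\Or$. When $B$ is a division algebra over $F$, the unique maximal order is the valuation ring of $B$, and its residue algebra is the separable unramified quadratic extension of $\kappa$; in particular, $\Or$ is residually inert and is handled below.

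If $\Or$ is residually split, then $\Or/\rad\Or \cong \kappa \times \kappa$ has two orthogonal idempotents summing to $1$. Using $\frakp$-adic completeness---equivalently, the standard Hensel-style lifting of idempotents across a topologically nil two-sided ideal---I lift these to orthogonal idempotents $e_1, e_2 \in \Or$ with $e_1 + e_2 = 1$; then $S \colonequals Re_1 + Re_2 \cong R \times R$ sits inside $\Or$ and is integrally closed in its total quotient ring $FS \cong F \times F$. If instead $\Or$ is residually inert, let $\ell \colonequals \Or/\rad\Or$ be the separable quadratic extension of $\kappa$. I choose a primitive element of $\ell$, lift its monic minimal polynomial to $f \in R[x]$ of degree $2$, and pick any lift $\alpha_0 \in \Or$ of the primitive element. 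Then $f(\alpha_0) \in \rad\Or$ while $f'(\alpha_0) \in \Or^\times$ by separability, so Hensel's lemma applied inside the commutative $\frakp$-adically complete local subring $R[\alpha_0] \subseteq \Or$ refines $\alpha_0$ to a genuine root $\alpha \in \Or$ of $f$. The resulting $S \colonequals R[\alpha]$ is the ring of integers of the unramified quadratic extension $F[x]/(f)$, hence integrally closed in its total quotient ring.

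In every case, $\Or$ contains an integrally closed quadratic $R$-subalgebra and is therefore basic, and Bass follows from Proposition~\ref{prop:onedir}. The only genuinely nontrivial points are the two lifting steps (orthogonal idempotents in the residually split case, a root via Hensel in the residually inert case); both are standard consequences of the $\frakp$-adic completeness secured by the opening reduction, so no serious obstacle is anticipated.
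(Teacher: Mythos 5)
Your proof is correct, but it takes a detectably different route from the paper's, and in one place does more work than necessary. The paper does not pass to the completion at all: for the maximal case it simply takes any quadratic field $K \subseteq B$ and observes that $\Or \cap K$ is integrally closed (a statement which, as stated, is really only clean when $B$ is division or after a suitable choice of embedding, so your more careful split/division dichotomy is a reasonable improvement); for the residually split case it cites an explicit structural description of Eichler orders from which $R \times R \subseteq \Or$ is visible without lifting idempotents; and for the residually inert case it uses the following much shorter observation, which avoids Hensel's lemma entirely: if $x \in \Or$ reduces to a generator of $\Or/\!\rad\Or$ over $\kappa$, then $x$ already satisfies its reduced characteristic polynomial $f(t) = t^2 - \trd(x)t + \nrd(x)$ over $R$, and $\bar f$ is the (separable, irreducible, degree $2$) minimal polynomial of $\bar x$ over $\kappa$; hence $R[x] \cong R[t]/(f)$ is automatically the valuation ring of the unramified quadratic extension of $F$, no root-refinement needed. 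The point is that you get to choose the monic polynomial to be the one the element already satisfies, rather than lifting a fixed $f$ and then adjusting the element. Your route buys a certain robustness (idempotent lifting and Hensel are very standard and need no appeal to structure theorems), at the cost of an extra reduction step to the complete case, which in turn rests on the unproved assertion in the preceding remark that basicness descends along completion. That remark is true (basicness is visibly a condition modulo $\p^2$ by Lemma~\ref{lem2}), but since the paper's direct argument never needs it, the paper's proof is the more self-contained of the two.
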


\begin{proof}
By the previous proposition it suffices to show that $\Or$ is basic.

If $\Or$ is maximal, let $K$ be a quadratic field extension of $F$ that is contained in $B$.
Then $\Or \cap K$ is an integrally closed quadratic order.
If $\Or$ is residually split, then $\Or$ contains a subring isomorphic to the integrally closed quadratic order $R \times R$ by \cite[Proposition 23.4.3]{Voight:quat}.

Finally, suppose $\Or$ is residually inert. 
Then $\Or/\!\rad \Or$ is a separable quadratic extension of $R/\p$.
If $x + \rad \Or$ with $x \in \Or$ generates $\Or/\!\rad \Or$ over $R/\p$, then $R[x]$ is the valuation ring of $F[x]$.

See also Voight \cite[24.5.2, Proposition 24.5.5]{Voight:quat}.  
\end{proof}

\begin{remark}
    One may wonder if it is also possible to embed an integrally closed quadratic order that is a \emph{domain} into a residually split (Eichler) order of level $\frakp^e$ with $e \geq 1$.  
    We restrict to the case where $2 \in R^\times$.
    For $e=1$, the element 
    $\begin{pmatrix} 0 & 1 \\ \pi & 0 \end{pmatrix}$  
    generates the valuation ring in a ramified extension.
    Suppose now $e \ge 2$ and $R[\alpha] \subseteq \Or$ is a domain with $\alpha \not \in R$.
    Without restriction $\trd(\alpha) = 0$.
    Let
    $ \alpha = \begin{pmatrix} a & b \\ c\pi^2 & -a \end{pmatrix} \in \calO$.
    Then $-\disc(\alpha)/4 = \nrd(\alpha) = -a^2 - bc\pi^e$.
    If $R[\alpha]$ is integrally closed, then either
    \begin{itemize}
        \item $v_\p(\nrd(\alpha)) = 0$  and $-\nrd(\alpha)$ represents a non-square in $R/\p$, or
        \item $v_\p(\nrd(\alpha))=1$.
    \end{itemize} 
    Since $-\nrd(\alpha)$ is clearly a square modulo $\p$, the first case is impossible.
    On the other hand, if $v_\p(a) \ge 1$, then $v_p(\nrd(\alpha)) \ge 2$.
    So $R[\alpha]$ is not integrally closed.
    This observation justifies the (more general) definition of basic orders allowing nondomains such as $R \times R$.
\end{remark}

\begin{lemma} \label{lem3} 
Suppose $\calO$ is Gorenstein with associated quadratic form $Q$ in a good basis as in \textup{\eqref{eqn:Qabcuvw}}. Suppose also that $\Or$ is not basic. Then, the following statements hold.
\begin{enumalph}
\item If $\pi \mid 2$ in $R$, then $\pi \mid u,v,w$.
\item Suppose that $\pi \mid u,v,w$.  Let $s \in \{a,b,c\}$ and suppose $\pi \mid s$.  Then $\pi^2 \mid s$.  
\end{enumalph}
\end{lemma}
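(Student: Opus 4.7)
The plan is to apply Lemma \ref{lem2} to each of the good-basis vectors $i, j, k$ in turn, reading off the consequences from the trace and norm formulas \eqref{eqn:trdnrdingoodbasis}.

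For part (a), applying Lemma \ref{lem2} to $\alpha = i$ produces $r \in R$ with $\pi \mid \trd(i - r) = u - 2r$; since $\pi \mid 2$ and hence $\pi \mid 2r$, we conclude $\pi \mid u$. The same argument with $\alpha = j$ and $\alpha = k$ yields $\pi \mid v$ and $\pi \mid w$.

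For part (b), by the symmetry of the good basis we may assume $s = a$. The Gorenstein hypothesis, via Proposition \ref{prop:bijOQO}, means $Q$ is primitive: some coefficient among $a, b, c, u, v, w$ is a unit in $R$. Combined with $\pi \mid u, v, w$ and $\pi \mid a$, this forces at least one of $b, c$ to be a unit in $R$; without loss of generality $\pi \nmid c$ (otherwise interchange the roles of $b$ and $c$, using $\alpha = k$ in place of $\alpha = j$ below). Now apply Lemma \ref{lem2} to $\alpha = j$: there exists $r \in R$ with $\pi^2 \mid \nrd(j - r) = ac - rv + r^2$. Reducing this congruence modulo $\pi$, both $ac$ and $rv$ vanish (using $\pi \mid a$ and $\pi \mid v$), so $r^2 \equiv 0 \pmod{\pi}$; since $R/\pi$ is a field, this forces $\pi \mid r$. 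Writing $r = \pi r_1$, both $rv$ and $r^2$ now lie in $\pi^2 R$, so the congruence $\pi^2 \mid ac - rv + r^2$ collapses to $\pi^2 \mid ac$. Since $c \in R^\times$, we deduce $\pi^2 \mid a$.

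There is no real obstacle: the argument is uniform in the residue characteristic of $R$, as the residue-characteristic-$2$ case (where the first condition $\pi \mid v - 2r$ of Lemma \ref{lem2} becomes vacuous) is absorbed transparently into the mod-$\pi$ step that extracts $\pi \mid r$ from the quadratic norm condition. The only nontrivial ingredient is invoking primitivity of $Q$ in part (b) to guarantee a unit among $\{b, c\}$, which is exactly what allows the passage from $\pi^2 \mid ac$ to $\pi^2 \mid a$.
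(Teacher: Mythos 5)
Your proof is correct and follows essentially the same route as the paper: part (a) is identical, and in part (b) you use the representative $\alpha=j$ with $\nrd(j-r)=ac-rv+r^2$ and a unit among $\{b,c\}$, while the paper uses $\alpha=k$ with $\nrd(k-r)=ab-rw+r^2$ and $b\in R^\times$; these are the same argument after a basis permutation. Your explicit appeal to primitivity of $Q$ (via Proposition~\ref{prop:bijOQO}) to justify that one of $b,c$ is a unit makes precise what the paper leaves implicit in its ``without loss of generality.''
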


\begin{proof}
For (a), to show that $\pi \mid u$, by Lemma \ref{lem2} there exists $r \in R$ such that $\pi \mid \trd(i-r)=u-2r$; since $\pi \mid 2$, we have $\pi \mid u$.  Similarly, arguing with $j,k$ we have $\pi \mid v,w$.

For (b), without loss of generality we suppose $s=a$ and $b \in R^\times$.  By Lemma \ref{lem2}, 
\[ \pi^2 \mid \nrd(k-r)=\nrd(k)-r\trd(k)+r^2=ab-rw+r^2. \] 
But $\pi \mid a,w$, so $\pi \mid r^2$.  Thus $\pi \mid r$, so $\pi^2 \mid rw,r^2$, so $\pi^2 \mid ab$; since $b \in R^\times$, we get $\pi^2 \mid a$. 
\end{proof}

\begin{lemma}\label{lem1} 
Suppose $\Or$ is Gorenstein, not basic, and residually ramified.  Then there exists a good basis of $\Or$ such that the associated quadratic form is given by $$Q(x,y,z)=ax^2+by^2+cz^2+uyz+wxy,$$ with $\pi \mid u,w$ and $\pi^2 \mid c$ and one of the following conditions holds:
\begin{enumroman}
\item $a \in R^\times$ and $\pi^2 \mid b$; or
\item $\pi^2 \mid a$ and $b \in R^\times$ and $w=0$.  
\end{enumroman}
\end{lemma}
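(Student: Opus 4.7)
My strategy is to massage the associated ternary quadratic form $Q = ax^2 + by^2 + cz^2 + uyz + vxz + wxy$ into the claimed normal form through $\mathrm{GL}_3(R)$-changes of good basis, iteratively applying Lemmas~\ref{lem2} and~\ref{lem3} to control coefficient divisibilities.

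First, I would arrange $\pi \mid u, v, w$ in some good basis. When $\pi \mid 2$ this is Lemma~\ref{lem3}(a). When $\pi \nmid 2$, the residually ramified hypothesis becomes essential: for a Gorenstein quaternion order the residue type is encoded by the rank of $\overline{Q}$ over $\kappa$, and residually ramified corresponds to rank $1$. In odd characteristic such a form is similar to $\bar{a} L^2$ for a linear form $L$, and a change of basis sending $L$ to the coordinate $x$ yields $\overline{Q} = \bar{a} x^2$, forcing $a \in R^\times$ and $\pi \mid b, c, u, v, w$. Once $\pi \mid u, v, w$ holds, Lemma~\ref{lem3}(b) forces each of $a, b, c$ to be either a unit or divisible by $\pi^2$, and primitivity (Proposition~\ref{prop:bijOQO}) ensures at least one is a unit.

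Next I would arrange for exactly one of $a, b, c$ to be a unit, placed at position $a$ or position $b$. In odd residue characteristic this is already achieved. In residue characteristic $2$, where multiple diagonal units may coexist with residually ramified, a shear of the form $y \mapsto y + \lambda x$ or $z \mapsto z + \mu x$---with $\lambda, \mu \in R$ chosen so that the new $b'$ (respectively $c'$) becomes a nonunit modulo $\pi$, solvable via $\bar{b}\lambda^2 \equiv -\bar{a} \pmod \pi$ using perfectness of $\kappa$---annihilates a superfluous unit; Lemma~\ref{lem3}(b) then upgrades this to $\pi^2$-divisibility while the condition $\pi \mid u', v', w'$ persists. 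Finally, I would eliminate the $xz$-coefficient $v$ (and in Case (ii) also the $xy$-coefficient $w$). With the unit at position $a$, the substitution $x \mapsto x - (vz + wy)/(2a)$ completes the square in odd residue characteristic and clears $v$ and $w$ simultaneously, delivering Case (i). In residue characteristic $2$, the partial substitution $x \mapsto x - vz/(2a)$ kills $v$ alone: this preserves $\pi^2 \mid c$ when $v_\pi(v) \geq 2$, landing us in Case (i); otherwise, the pivot spoils $\pi^2 \mid c$, and I would instead relocate the unit to position $b$ by an analogous shear and pivot on $b$ via $y \mapsto y + sx + tz$ with $s, t \in R$ chosen so that both $v$ and $w$ vanish, yielding Case (ii).

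The principal technical obstacle lies in the residue characteristic $2$ case: one must verify that the dichotomy between Cases (i) and (ii) exhausts every configuration, and track the effect of each $\mathrm{GL}_3(R)$-substitution on all six coefficients of $Q$ so that the $\pi^2$-divisibilities controlled by Lemma~\ref{lem3}(b) can be re-invoked at every stage.
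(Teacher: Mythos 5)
Your proposal has two genuine gaps, the first of which is a deal-breaker in the context of this paper.

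First, in residue characteristic $2$ you solve $\bar{b}\lambda^2 \equiv -\bar{a} \pmod{\p}$ by appealing to \emph{perfectness} of $\kappa$. But the paper deliberately avoids this hypothesis: the introduction explicitly presents this result as an improvement over Brzezinski's, whose proof required $R$ complete with \emph{perfect residue field} and $\opchar R \neq 2$, and the new proofs are advertised as uniform in the characteristic with no such hypotheses. The correct source of the square root is not perfectness of $\kappa$ but the residually ramified structure itself. In the paper's argument, after reducing to a form $Q_2 = a_2 x^2 + b_2 y^2 + c_2 z^2 + u_2 yz$ (with the $xz$ and $xy$ terms already absent), the good-basis relation $k_2^2 = -a_2 b_2$ passes to $\calO/\!\rad\calO \simeq \kappa$ to show $-\overline{a_2 b_2}$ is a square in $\kappa$, since the image of $k_2$ lies in $\kappa$; this furnishes the $s_2$ needed for the final shear without any perfectness assumption. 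Your argument would need to be rephrased along these lines to survive general residue fields.

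Second, your handling of the $xz$-coefficient $v$ is delicate and only sketched. You clear it last by the substitution $x \mapsto x - vz/(2a)$, which fails to be defined over $R$ when $\pi \mid 2$ unless $v_\pi(v) \geq v_\pi(2)$, a divisibility you have not established (Lemma~\ref{lem3}(b) only controls $a,b,c$, not $u,v,w$); your alternative of pivoting at $b$ via $y \mapsto y + sx + tz$ has the same $2b$-invertibility issue. The paper circumvents this entirely by invoking a standard normal-form reduction (Voight, \emph{Identifying the matrix ring}, Proposition 3.10) at the \emph{outset}, which produces $Q_1 = a_1 x^2 + b_1 y^2 + c_1 z^2 + u_1 yz$ with the $xz$ and $xy$ terms already zero (and $u_1 = 0$ when $2 \in R^\times$). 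Starting from that normal form rather than the raw $Q$ makes the rest of the argument a clean sequence of swaps and shears and avoids the residue-characteristic-$2$ denominator issue altogether. I'd encourage you to restructure your proof around that reduction and to replace the perfectness appeal with the algebra-theoretic square root.
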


\begin{proof} 
As explained in section 2, a change of good basis of $\Or$ corresponds to a change of basis for $Q$, so we work with the latter.  By a standard ``normal form'' argument (see e.g.\ Voight \cite[Proposition 3.10]{Voight:idenmat}), there exists a basis $e_{11},e_{12},e_{13}$ such that $Q$ becomes
\begin{equation} 
Q_1(x,y,z)=a_1x^2+b_1y^2+c_1z^2+u_1yz
\end{equation}
with $a_1,b_1,c_1,u_1 \in R$ and not all in $\frakp$, and $u_1=0$ if $2 \in R^\times$.  Let $1,i_1,j_1,k_1$ be the corresponding good basis for $\calO$.  

We modify this basis further to obtain the desired divisibility, as follows.  First, suppose that $2 \in R^\times$.  Completing the square then swapping basis vectors, we obtain the diagonal quadratic form $Q_2(x,y,z) = a_2x^2+b_2y^2+c_2z^2$ with $a \in R^\times$.  If $b_2 \in R^\times$, then $\calO$ is not residually ramified, a contradiction, so we must have $\pi \mid b_2$ and by symmetry $\pi \mid c_2$.  By Lemma \ref{lem3}, we get $\pi^2 \mid b_2,c_2$, and we are in case (i) (which becomes case (ii) after a basis swap).  

Second, suppose that $2 \not\in R^\times$, so $\pi \mid 2$.  By Lemma \ref{lem3}(a), we have $\pi \mid u_1$.  If $\pi \mid c_1$, we keep the basis unchanged and pass all subscripts $1$ to $2$.  If $\pi \mid b_1$, we take $e_{21},e_{22},e_{23} \colonequals e_{11},e_{13},e_{12}$ (swapping second and third basis elements); in this basis, we obtain the quadratic form
\begin{equation} \label{eqn:Q2}
Q_2(x,y,z)=a_2x^2+b_2y^2+c_2z^2+u_2yz 
\end{equation}
with $a_2=a_1$, $b_2=c_1$, $c_2=b_1$, and $u_2=u_1$, with $\pi \mid c_2$.  Otherwise, suppose $b_1,c_1 \in R^\times$.  Since $\calO$ is residually ramified, we have $\calO/\!\rad \calO \simeq R/\frakp$; moreover, since $i_1^2=u_1i_1-b_1c_1$.  Reducing modulo $\frakp$, we conclude that $-b_1c_1 \in (R/\frakp)^{\times 2}$, so there exists $s \in R$ such that $s_1^2 \equiv -c_1b_1^{-1} \pmod{\frakp}$.  We take the new basis $e_{21},e_{22},e_{23} \colonequals e_{11},e_{12},e_{13}+s_1e_{12}$.  In this basis, we obtain the quadratic form \eqref{eqn:Q2} where now 
\begin{center}
$a_2=a_1$, $b_2=b_1$, $c_2=c_1+su_1+s^2b_1$, and $u_2=u_1+2sb_1$.
\end{center}  
Since $\pi \mid u_1$ and $\pi \mid (c_1+s_1^2b_1)$, we have $\pi \mid c_2$.  In all cases, we have $\pi \mid c_2$.  By Lemma \ref{lem3}, we immediately upgrade to $\pi^2 \mid c_2$.  Finally, since $\Or$ is Gorenstein, either $\pi \mid a_2$ and then $b_2 \in R^\times$ and $\pi^2 \mid a_2$ as in case (ii), or we have $a_2 \in R^\times$.  

To finish, we suppose that $a_2 \in R^\times$ and we make one final change of basis to get us into case (i).  As in the previous paragraph, we have $k_2^2=-a_2b_2$, so there exists $s_2 \in R$ such that $s_2^2 \equiv -b_2a_2^{-1} \pmod{\frakp}$.  We take the new basis $e_{31},e_{32},e_{33} \colonequals e_{21},e_{22}+s_2e_{21},e_{23}$, giving the quadratic form 
$Q_3(x,y,z)=a_3x^2+b_3y^2+c_3z^2+u_3yz+w_3xy$ and
\begin{center}
$a_3=a_2$, $b_3=b_2+a_2s_2^2$, $c_3=c_2$, $u_3=u_2$, and $w_3=2a_2s_2$.
\end{center}
Now $\pi \mid b_3$ by construction, and $\pi \mid u_3,w_3$ so we upgrade to $\pi^2 \mid b_3$ and get to case (i).  
\end{proof}

We now prove Theorem \ref{thm:thm1}.  

\begin{theorem}\label{thm1} 
The order $\Or$ is Bass if and only $\calO$ is basic. 
\end{theorem}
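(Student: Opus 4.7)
The reverse direction (basic $\Rightarrow$ Bass) is Proposition~\ref{prop:onedir}, so the only thing to argue is Bass $\Rightarrow$ basic. Since every Bass order is Gorenstein, and since Lemma~\ref{lem:onlyresram} already settles the maximal, residually split, and residually inert cases, we may reduce to the case where $\Or$ is Gorenstein, residually ramified, and Bass. We argue by contradiction: assume $\Or$ is not basic. By Lemma~\ref{lem1} we may fix a good basis in which the associated form is
\[
  Q(x,y,z) = ax^2 + by^2 + cz^2 + uyz + wxy
\]
falling in case (i) or (ii). Our plan is, in each case, to exhibit a superorder $\Or'\supsetneq \Or$ whose associated ternary form $Q'$ has every coefficient in $\p$; such a $Q'$ fails to be primitive, so by Proposition~\ref{prop:bijOQO} the order $\Or'$ is not Gorenstein, contradicting the Bass hypothesis.

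In case (ii), where $b \in R^\times$, $\pi^2 \mid a, c$, $\pi \mid u$, and $v = w = 0$, we take
\[
  \Or' \colonequals R + Ri + R(j/\pi) + Rk.
\]
Since $\trd(j) = 0$ and $\pi^2 \mid \nrd(j) = ac$, the element $j/\pi$ is integral, and the relations \eqref{eqn:ijkabc} together with the divisibility assumptions make closure of $\Or'$ under multiplication a direct computation. Reading off a good basis for $\Or'$ from the resulting multiplication table, the associated form has coefficients $(a/\pi,\, \pi b,\, c/\pi,\, u,\, 0,\, 0)$, each of which lies in $\p$.

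Case (i), where $a \in R^\times$, $\pi^2 \mid b, c$, and $v = 0$, is handled analogously with $\Or' \colonequals R + R(i/\pi) + Rj + Rk$, yielding new coefficients $(\pi a,\, b/\pi,\, c/\pi,\, u/\pi,\, 0,\, w)$. All of these lie in $\p$ except possibly $u/\pi$, which is a unit precisely when $\pi$ exactly divides $u$. By the proof of Lemma~\ref{lem1}, this subcase is possible only in residue characteristic $2$, and we expect it to be the main obstacle. To handle it, we abandon the superorder strategy and locate an integrally closed quadratic subring of $\Or$ directly: set $\alpha \colonequals j + k \in \Or$. A computation gives $\trd(\alpha) = w$ and $\nrd(\alpha) = a(b + c - u) \equiv -au \pmod{\p^2}$ with $v_\p(au) = 1$. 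Expanding $\nrd(\alpha - r) = r^2 - wr + \nrd(\alpha)$ and reducing modulo $\p^2$, a short case split on whether $\pi \mid r$ will show $\p^2 \nmid \nrd(\alpha - r)$ for every $r \in R$. Lemma~\ref{lem2} then forces $\Or$ to be basic, contradicting our standing assumption and completing the proof.
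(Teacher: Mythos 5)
Your proof is correct and follows essentially the same strategy as the paper: reduce to the residually ramified Gorenstein case, invoke Lemma~\ref{lem1} to obtain the normal form, and exhibit a non-Gorenstein superorder by dividing the appropriate basis vector by $\pi$. The only deviation is cosmetic: in case~(i) the paper first applies Lemma~\ref{lem2} to $\alpha = j+k$ to deduce $\pi^2 \mid u$ from the non-basic hypothesis \emph{before} constructing $\Or'$, whereas you construct $\Or'$ first, observe that $u/\pi$ might be a unit, and in that subcase apply the same computation with $j+k$ (via Lemma~\ref{lem2}, in the contrapositive direction) to conclude $\Or$ is basic and obtain a contradiction; the underlying computation of $\nrd(j+k-r) = r^2 - wr + a(b+c-u)$ and the case split on $\pi \mid r$ is identical.
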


\begin{proof}
We proved $(\Leftarrow)$ in Proposition \ref{prop:onedir}.  We prove $(\Rightarrow)$ by the contrapositive: we suppose that $\Or$ is not basic and show $\calO$ is not Bass by exhibiting a $R$-superorder $\calO'\supseteq \calO$ that is not Gorenstein.  If already $\calO$ is not Gorenstein, then we are done; so suppose that $\Or$ is Gorenstein.  

By Lemma \ref{lem:onlyresram}, we must have $\calO$ residually ramified.  Then by Lemma \ref{lem1}, there exists a good basis for $\Or$ such that the corresponding quadratic form satisfies either (i) or (ii) from that lemma.  We treat these two cases in turn. 

We begin with case (i).  We first claim that $\pi^2 \mid u$.  By Lemma \ref{lem2}, there exists $r$ such that $\pi^2 \mid \nrd(j+k-r)=ac+ab-au-rw+r^2$; since $\pi \mid b,c,u,w$ we conclude $\pi \mid r$; then $\pi^2 \mid b,c,r^2,rw$ implies $\pi^2 \mid au$, and since $a \in R^\times$ we get $\pi^2 \mid u$.  This gives us a (minimal) non-Gorenstein superorder, as follows.  Let $i' \colonequals \pi^{-1} i$ and let $\Or' \colonequals R + Ri'+Rj+Rk$.  Then $\calO' \supseteq \calO$ and $\calO'$ has the following multiplication table, with coefficients
\begin{center}
$a' \colonequals \pi a$, $b' \colonequals \pi^{-1} b$, $c' \colonequals  \pi^{-1} c$, $u' \colonequals \pi^{-1}u$, $w' \colonequals  w$
\end{center}
in $R$:
\begin{equation} 
\begin{aligned}
(i')^2 &=\pi^{-2}(ui-bc)=u'i'-b'c' & \qquad jk &=a\overline{i} = a'\overline{i'} \\
j^2 &=-ac=-a'c' & ki' &=\pi^{-1}b\overline{j}=b'\overline{j} \\
k^2 &=wk-ab=w'k-a'b' & i'j &=\pi^{-1}c\overline{k}=c'\overline{k}.
\end{aligned} 
\end{equation}
Thus $\calO'$ is an $R$-order with $Q'(x,y,z)=a'x^2+b'y^2+c'z^2+u'yz+w'xy$, all of whose coefficients are divisible by $\pi$.  We conclude $\Or'$ is not Gorenstein and so $\calO$ is not Bass.

Case (ii) follows similarly, taking instead $j' \colonequals \pi^{-1} j$ and $\Or' \colonequals R+Ri+Rj'+Rk$, with associated quadratic form $Q'(x,y,z)=a'x^2+b'y^2+c'z^2+u'yz$ satisfying $a'=\pi^{-1} a$, $b'=\pi b$, $c'=\pi^{-1} c$, $u'=u$, all of which are divisible by $\pi$. 
\end{proof}

\begin{remark}
If $\Or$ is a Gorenstein order that is neither residually split nor maximal, the \defi{radical idealizer} $\Or^{\natural}=\Or_L(\rad \Or) = \Or_R(\rad \Or)$ is the unique minimal superorder by \cite[Proposition 24.4.12]{Voight:quat}.
In the previous proof $[\Or':\Or]_\p = \p$, and hence necessarily $\Or^{\natural}=\Or'$.
We have therefore proved that if $\Or$ and $\Or^\natural$ are both Gorenstein, then $\Or$ is basic.
We come back to this in the next section, when proving Corollary~\ref{cor:local-bass}.
\end{remark}

\section{A second proof for local Bass orders being basic} \label{sec:variant}

In this section, we given a second proof of (the hard direction of) Theorem \ref{thm:thm1} using a different argument.  We retain our notation from the previous section; in particular $R$ is a discrete valuation ring with maximal ideal $\p = \pi R$.  

By classification, we see that a quaternion $R$-order $\calO$ is a local ring (has a unique maximal left [right] ideal, necessarily equal to its Jacobson radical $\rad \calO$) if and only if $\calO$ is neither maximal nor residually split.  

\begin{lemma} \label{lem-radical}
Suppose that $\Or$ is a local ring.  Let $\alpha \in \rad \Or$.  Then the following statements hold.
  \begin{enumalph}
    \item \label{lem-radical:general} We have $\pi \mid \trd(\alpha),\nrd(\alpha)$ and $\alpha^2 \in \frakp \Or$.
    \item \label{lem-radical:nonbasic} If $\Or$ is not basic, then $\pi^2 \mid \nrd(\alpha)$ and $\alpha^2 \in \frakp \rad \Or$.
    \end{enumalph}
\end{lemma}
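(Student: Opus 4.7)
My plan is to exploit two general features of a local quaternion order: the standard involution $\alpha \mapsto \bar\alpha$ is an antiautomorphism of $\Or$ and hence preserves $\rad\Or$ setwise, and an element $\xi \in \Or$ is a unit if and only if $\nrd(\xi) \in R^\times$ (the inverse being $\bar\xi/\nrd(\xi) \in \Or$); so in the local case the non-units of $\Or$ coincide with $\rad\Or$. I will also use the observation that $\p \subseteq \rad\Or$ since $\Or$ is module-finite over $R$, which together with $1 \notin \rad\Or$ gives $\rad\Or \cap R = \p$.

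For (a), the involution permutes the maximal left ideals of $\Or$ with its maximal right ideals, and since the Jacobson radical equals the intersection of either family, $\overline{\rad\Or} = \rad\Or$. Hence for any $\alpha \in \rad\Or$ also $\bar\alpha \in \rad\Or$, so $\trd(\alpha) = \alpha + \bar\alpha$ and $\nrd(\alpha) = \alpha\bar\alpha$ both lie in $\rad\Or \cap R = \p$. The characteristic relation $\alpha^2 = \trd(\alpha)\alpha - \nrd(\alpha)$ then immediately places $\alpha^2$ in $\p\Or$.

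For (b), I would apply Lemma~\ref{lem2} to obtain $r \in R$ with $\pi \mid \trd(\alpha - r)$ and $\pi^2 \mid \nrd(\alpha - r)$. The crux is to upgrade this to $r \in \p$: once this is done, expanding $\nrd(\alpha) = \nrd(\alpha - r) + r\,\trd(\alpha) - r^2$ puts every summand in $\p^2$ (using part (a) on the middle term), so $\pi^2 \mid \nrd(\alpha)$; then $\alpha^2 = \trd(\alpha)\alpha - \nrd(\alpha)$ lies in $\p \cdot \rad\Or + \p^2 \subseteq \p\,\rad\Or$ (using $\p \subseteq \rad\Or$ for the second term). To show $r \in \p$, I observe that $\nrd(\alpha - r) \in \p$ forces $\alpha - r$ to be a non-unit in $\Or$ by the unit characterization, hence $\alpha - r \in \rad\Or$ by locality, and then $r = \alpha - (\alpha - r) \in \rad\Or \cap R = \p$.

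The main obstacle is precisely the step $r \in \p$. A direct attack via the trace condition $\pi \mid \trd(\alpha) - 2r$ breaks down when $\opchar \kappa = 2$, where it is automatic and imposes no constraint on $r$. Routing through the norm condition together with the unit characterization bypasses this uniformly in the residue characteristic, and is the only point in the proof at which the hypothesis ``$\Or$ is local'' is genuinely used---part (a) itself would go through for any $R$-order.
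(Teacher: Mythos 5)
Your proof is correct, but it takes a genuinely different route from the paper's in both parts. For (a), the paper argues via nilpotence: since $\Or/\frakp\Or$ is Artinian, $\rad\Or/\frakp\Or$ is nilpotent, so $\alpha^r \in \frakp\Or$ for some $r$, forcing the reduced characteristic polynomial of $\alpha$ modulo $\frakp$ to be $x^2$, which yields all three conclusions at once. You instead use the stability of $\rad\Or$ under the standard involution to place $\trd(\alpha) = \alpha+\bar\alpha$ and $\nrd(\alpha) = \alpha\bar\alpha$ in $\rad\Or \cap R = \frakp$, then read off $\alpha^2 \in \frakp\Or$ from the characteristic relation; this is a nice self-contained alternative that, as you note, really does hold for any $R$-order. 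For (b), the paper reasons directly: given (a), if $\pi^2 \nmid \nrd(\alpha)$, then the reduced characteristic polynomial $f(x)=x^2-\trd(\alpha)x+\nrd(\alpha)$ is Eisenstein, so $R[\alpha]$ is a DVR and hence integrally closed, contradicting that $\Or$ is not basic. You instead route through Lemma~\ref{lem2} to produce $r$, then use the unit characterization (non-units $=\rad\Or$ in the local ring) to conclude $r\in\frakp$, and finish by expanding $\nrd(\alpha-r)$. Both arguments are valid; the paper's Eisenstein argument is a little shorter and avoids invoking Lemma~\ref{lem2}, while yours makes explicit the one place locality is genuinely needed and correctly identifies why a naive attack via the trace would fail in residue characteristic $2$.
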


\begin{proof}
Since $\Or/\frakp\Or$ is artinian, $(\rad \Or)/\frakp \Or$ is nilpotent, so there exists $r \in \Z_{\geq 1}$ such that $\alpha^r \equiv 0 \pmod{\frakp \calO}$.  Thus the image of $\alpha$ in the $R/\frakp$-algebra $\Or/\frakp\Or$ satisfies $x^r=0$, so its reduced characteristic polynomial must be $x^2=0$; thus $\alpha^2 \in \frakp\calO$ and $\trd(\alpha),\nrd(\alpha) \equiv 0 \pmod{\frakp}$, proving (a).  

Since $\alpha$ satisfies its reduced characteristic polynomial $f(x)=x^2-\trd(\alpha)x+\nrd(\alpha) \in R[x]$, if $\pi^2 \mid \nrd(\alpha)$, then $f(x)$ is an Eisenstein polynomial so $R[\alpha]$ is a DVR and in particular integrally closed, contradicting that $\Or$ is not basic and proving the first part of (b).  To conclude, $\nrd(\alpha) \in \frakp^2 \subseteq \frakp \rad \Or$ so 
$\alpha^2=\trd(\alpha)\alpha-\nrd(\alpha) \in \frakp \rad \Or$. 
\end{proof}

In the next two proofs we exploit the following basic fact:
Suppose $R/\frakp \cong \Or/\!\rad \Or$ (via the natural map) and let $M$ be an $\Or$-module with $M \rad\Or =0$.
Then for every $a \in A$ and $m \in M$ we can find $r \in R$ with $a - r \in \rad\Or$ and hence $am = rm$.
Specifically, we use this to see that a set $X$ that generates $M$ as $A$-module already generates it as $R$-module.

More abstractly, the $\Or$-module structure on $M$ is in fact an $\Or/\!\rad \Or$-module structure and hence the same as the $R/\frakp$-module structure coming from the inclusion $R \subseteq \Or$.
(It is always made explicit when this is used.)

We will apply the following lemma to $\Or/\pi \rad\Or$.

\begin{lemma} \label{lem-gens}
  Let $A$ be a local artinian $R$-algebra with $R/\frakp \cong A /\!\rad A$ \textup{(}via the natural map\textup{)}.
  If $y_1$, $\ldots\,$,~$y_n$ generate $\rad A$ as ideal of $A$, then they generate $A$ as $R$-algebra.
\end{lemma}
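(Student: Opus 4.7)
The plan is to filter $A$ by powers of $\mathfrak{J} \colonequals \rad A$ and work down from the top, using that $A$ is artinian (hence $\mathfrak{J}$ is nilpotent). Set $B \colonequals R[y_1, \ldots, y_n] \subseteq A$; the goal is $A = B$. Fix $N$ with $\mathfrak{J}^N = 0$. The core claim to establish is that $\mathfrak{J}^k \subseteq B$ for every $k \geq 1$, by descending induction on $k$. Once this is proved, taking $k = 1$ yields $\mathfrak{J} \subseteq B$, and combining with $A = R + \mathfrak{J}$ (which follows from the hypothesis $R/\frakp \cong A/\mathfrak{J}$) gives $A \subseteq R + B = B$.

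For the descending induction, the base case $k = N$ is trivial. For the inductive step, assume $\mathfrak{J}^{k+1} \subseteq B$. Since $y_1, \ldots, y_n$ generate $\mathfrak{J}$ as an ideal of $A$, each element of $\mathfrak{J}^k$ is an additive combination of products
\[
a_0 \, y_{i_1} \, a_1 \, y_{i_2} \, a_2 \cdots y_{i_k} \, a_k
\]
with $a_j \in A$ and $i_j \in \{1, \ldots, n\}$. Using $R/\frakp \cong A/\mathfrak{J}$, for each $a_j$ pick $r_j \in R$ with $a_j - r_j \in \mathfrak{J}$. Substituting $a_j = r_j + (a_j - r_j)$ and expanding---pulling the $r_j$'s to the front using that $R$ is central in $A$---yields the monomial $(r_0 r_1 \cdots r_k)\, y_{i_1} \cdots y_{i_k} \in B$, together with remaining terms in which some $a_j$ has been replaced by an element of $\mathfrak{J}$. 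Each such error term contains at least $k+1$ factors from $\mathfrak{J}$, so lies in $\mathfrak{J}^{k+1} \subseteq B$ by the inductive hypothesis. Hence $\mathfrak{J}^k \subseteq B$.

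Conceptually this is the Nakayama-style observation that each successive quotient $\mathfrak{J}^k/\mathfrak{J}^{k+1}$, being killed by $\mathfrak{J}$, is an $R/\frakp$-module (exactly the fact highlighted in the paragraph preceding the lemma), and as such is spanned by the images of the length-$k$ monomials in the $y_i$'s; the claim then lifts filtration-step by filtration-step to $A$ itself. I do not anticipate any serious obstacle---the only delicate point is the noncommutative bookkeeping in the substitution step, which relies on $R$ being central in the $R$-algebra $A$ (standard in the paper's setting).
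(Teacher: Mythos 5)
Your proof is correct and takes essentially the same approach as the paper's. The only cosmetic difference is organizational: the paper works with the graded pieces $J^l/J^{l+1}$ and assembles via the filtration $A \supseteq J \supseteq J^2 \supseteq \cdots$, whereas you run a descending induction on the claim $\mathfrak{J}^k \subseteq R[y_1,\ldots,y_n]$ and then absorb $A = R + \mathfrak{J}$ at the end; the substitution $a_j = r_j + (a_j - r_j)$ with $a_j - r_j \in \mathfrak{J}$, the use of centrality of $R$ to collect the $r_j$'s, and the observation that the error terms land in $\mathfrak{J}^{k+1}$ are all identical to the paper's argument.
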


\begin{proof}
  Let $J=\rad A$.
  Since $A$ is artinian, there exists $m \ge 0$ with $J^m = 0$.
  Let $l \in [1,m]$.
  We claim that the images of $Z_l = \{y_{\nu_1} \cdots y_{\nu_l} : \nu_1, \ldots, \nu_l \in 1,\dots,n\}$ generate $J^l/J^{l+1}$ as $R$-module.
  Indeed, the elements of the form $a_0 y_{\nu_1} a_1 y_{\nu_2} a_2 \cdots a_{l-1} y_{\nu_l} a_l$ with $a_0$,~$\ldots\,$,~$a_l \in A$ generate $J^l$ as $R$-module.
  Since $R/\frakp \cong A/J$, we can write $a_\mu = r_\mu + x_\mu$ with $r_\mu \in R$ and $x_\mu \in J$.
  Thus elements of the form $r_0 y_{\nu_1} r_1 y_{\nu_2} r_2 \cdots r_{l-1} y_{\nu_l} r_l$ generate $J^l/J^{l+1}$ as $R$-module, and since $R$ acts centrally on $A$, the set $Z_l$ suffices to generate $J^l/J^{l+1}$.

  Since $A/J \cong R/\frakp$, the $R$-module $A/J$ is generated by $1+J$.
  Using the filtration $A \supseteq J \supseteq J^2 \supseteq \cdots \supseteq J^m = 0$, we see that $\{1\} \cup Z_1 \cup \cdots \cup Z_l$ generates $A$ as $R$-module.
\end{proof}

\begin{theorem} \label{thm:2gen-basic}
  Let $\Or$ be a residually ramified Bass $R$-order and suppose that $\rad \Or$ is generated by two elements as left \textup{[}right\textup{]} ideal.
  Then $\Or$ is basic.
\end{theorem}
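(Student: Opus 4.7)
The plan is to argue by contradiction. Suppose $\Or$ is \emph{not} basic; we will show that $\rad\Or$ then requires at least three generators as a left ideal, contradicting the hypothesis. The right-ideal case is symmetric. The core of the argument is a Nakayama-style computation of $\dim_\kappa \rad\Or/(\rad\Or)^2$, carried out inside the artinian quotient $A = \Or/\pi\rad\Or$.

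Since $\Or$ is residually ramified, it is a local ring with $\Or/\rad\Or \cong \kappa$; the filtration
\[ \Or \supset \rad\Or \supset \pi\Or \supset \pi\rad\Or \]
has successive $R$-length quotients $1,3,1$, so $\mathrm{length}_R A = 5$. Lemma~\ref{lem-radical}(b), applied under the non-basic hypothesis, gives $\alpha^2 \in \pi\rad\Or$ for every $\alpha \in \rad\Or$. Taking $\alpha = y_1, y_2, y_1+y_2$ and polarizing, the images in $A$ satisfy $\bar y_i^2 = 0$ and the anticommutation relation $\bar y_1 \bar y_2 + \bar y_2 \bar y_1 = 0$. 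By Lemma~\ref{lem-gens}, $\bar y_1, \bar y_2$ generate $A$ as an $R$-algebra; the above relations collapse every monomial in $\bar y_1, \bar y_2$ of length $\ge 3$ to zero. Combined with $\pi^2 = 0$ and $\pi \bar y_i = 0$ in $A$, this yields an $R$-module presentation $A = R\cdot 1 + R\bar y_1 + R\bar y_2 + R\bar y_1\bar y_2$ of total length at most $2+1+1+1 = 5$; comparing with $\mathrm{length}_R A = 5$, the presentation is in fact an $R$-module direct sum.

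Next I compute $(\rad A)^2$ directly. As a $\kappa$-vector space, $\rad A$ has basis $\bar\pi, \bar y_1, \bar y_2, \bar y_1 \bar y_2$. All products involving $\bar\pi$ vanish (since $\pi\bar y_i = 0$ and $\pi^2 = 0$ in $A$), the squares $\bar y_i^2$ vanish, and repeatedly applying the anticommutation relation forces $\bar y_i \cdot (\bar y_1 \bar y_2) = (\bar y_1 \bar y_2)\cdot \bar y_i = 0$. Only the single product $\bar y_1 \bar y_2$ survives, so $(\rad A)^2 = \kappa \cdot \bar y_1 \bar y_2$ is one-dimensional and $\dim_\kappa \rad A/(\rad A)^2 = 3$.

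Finally, because $\pi \in \rad\Or$ we have $\pi\rad\Or \subseteq (\rad\Or)^2$, which lets us identify $\rad\Or/(\rad\Or)^2 = \rad A/(\rad A)^2$. By Nakayama's lemma, the minimal number of left generators of $\rad\Or$ equals $\dim_\kappa \rad\Or/(\rad\Or)^2 = 3$, contradicting the two-generated hypothesis. The most delicate point will be verifying that the four-element $R$-module spanning set $\{1,\bar y_1, \bar y_2,\bar y_1\bar y_2\}$ for $A$ is in fact a basis; this is where residually ramified enters essentially, pinning down $\dim_\kappa \rad\Or/\pi\Or = 3$ and making the length bookkeeping sharp enough to force equality rather than a strict inclusion.
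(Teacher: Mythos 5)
Your proof is correct and follows essentially the same route as the paper's: reduce modulo $\pi\rad\Or$, invoke Lemma~\ref{lem-radical} and Lemma~\ref{lem-gens} to obtain nilpotent, anticommuting $R$-algebra generators of the quotient, and derive a contradiction by counting $R$-lengths. The only difference is presentational: the paper argues via one-sided inequalities on lengths, whereas you pin down the exact $\kappa$-dimension of $\rad\Or/(\rad\Or)^2$ by exhibiting an explicit direct-sum decomposition of $\Or/\pi\rad\Or$, but the underlying computation is the same.
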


\begin{proof}
  Suppose $\rad \Or$ is generated by two elements $\alpha_1,\alpha_2$ (as, say, left ideal, the other case being symmetric).
  Then the same is true for $\rad \Or/(\rad \Or)^2$ as $\Or$-module, and hence also as $\Or/\!\rad \Or$-module.
  Since $\Or /\!\rad \Or \cong R/\frakp$, this just means $\dim_{R/\frakp} \rad \Or/(\rad \Or)^2 \le 2$.

  Suppose now to the contrary that $\Or$ is not basic, and let $I = \pi\rad\Or$.
  Observe $I \subseteq (\rad \Or)^2$ since $\pi\Or \subseteq \rad \Or$.
  From Lemma~\ref{lem-radical} we obtain $\alpha_1^2$, $\alpha_2^2$, $(\alpha_1+\alpha_2)^2 \in I$.
  Thus $0 \equiv (\alpha_1 + \alpha_2)^2 \equiv \alpha_1 \alpha_2 + \alpha_2 \alpha_1 \pmod I$.
  By Lemma~\ref{lem-gens} the elements $\alpha_1+I$,~$\alpha_2+I$ generate $\Or/I$ as $R$-algebra.
  Since they also anticommute, we see that $\alpha_1+I$ and $\alpha_2+I$ are normal elements in $\Or/I$.
  From this it follows that $(\rad \Or)^2/I$ is generated by $\alpha_1\alpha_2 + I$ as $\Or/I$-module.

  Again using that $\alpha_1+I$ and $\alpha_2+I$ anticommute, we have $\alpha_1(\alpha_1\alpha_2) \equiv 0 \equiv \alpha_2(\alpha_1\alpha_2)  \mod I$.
  This implies that $(\rad \Or)^2/I$ is in fact generated by $\alpha_1\alpha_2+I$ as $\Or/\!\rad \Or$-module, and hence as $R/\frakp$-vector space.

  We write $\lambda(M)$ for the length of an $\Or$-module.
  Since $\dim_{R/\frakp} \rad \Or/(\rad \Or)^2 \le 2$ we have $\lambda(\Or/(\rad \Or)^2) \le 3$.

  Because $O$ is residually ramified and $\rad \Or / \pi \Or \cong I/\pi^2\Or$ we find $\lambda(I/\pi^2\Or)=3$.
  Now
  \[
    8 = \lambda(\Or / \pi^2 \Or) = \lambda(\Or / I) + \lambda(I/\pi^2\Or) = \lambda(\Or / I) + 3
  \]
 implies $\lambda(\Or / I) = 5$.
  But $\lambda((\rad \Or)^2 / I) = 1$ implies $\lambda( \Or / (\rad \Or)^2) = 4$, a contradiction.
\end{proof}

The previous theorem together with the characterization of Bass orders  \cite[Proposition 24.5.3]{Voight:quat} implies that every (residually ramified) Bass order is basic.
Alternatively, it is easy to see directly that the assumption of Theorem~\ref{thm:2gen-basic} holds for Bass orders, as the next proposition shows.

\begin{prop} \label{prop:overorders-2gen}
  If $\Or$ and $\Or^{\natural}$ are Gorenstein $R$-orders, then $\rad \Or$ is generated by two elements \textup{(}as left, respectively, right $\Or$-ideal\textup{)}.
\end{prop}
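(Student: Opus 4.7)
The plan is to exhibit two explicit generators for $\rad\Or$ as a one-sided $\Or$-ideal, after reducing to a case in which the structure of $\Or$ is well understood. The first step is to invoke the remark concluding Section~\ref{sec:locbass1}: because $\Or$ and $\Or^\natural$ are both Gorenstein, $\Or$ is basic. Fix an integrally closed commutative quadratic $R$-subalgebra $S\subseteq\Or$; the decomposition of $\Or$ as an $S$-module will supply the desired generators.

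I then split on the classification of $\Or/\!\rad\Or$. If $\Or$ is maximal, $\rad\Or$ is already principal. If $\Or$ is residually inert, Lemma~\ref{lem:onlyresram} lets us take $S$ to be the valuation ring of the unramified quadratic extension of $F$; writing $\Or=S\oplus Se$ with $e\alpha=\bar{\alpha}e$ and $e^2\in\pi R^\times$, one sees $\Or e = Se + Se^2 = \pi S + Se = \rad\Or$, so $\rad\Or$ is in fact principal. If $\Or$ is residually split, then $\Or$ is an Eichler order, $S\cong R\times R$ sits as the diagonal subalgebra, and the natural ``swap'' element $e$ in the decomposition $\Or = S \oplus Se$ together with $\pi$ generates $\rad\Or$ on either side by a direct computation in the matrix presentation.

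The central case is $\Or$ residually ramified. I first argue $S$ must then be the valuation ring of a ramified quadratic field extension $K/F$: the unramified possibility would embed the field $S/\pi S$, of dimension~$2$ over $R/\p$, into $\Or/\!\rad\Or\cong R/\p$, which is impossible; and $S\cong R\times R$ would contribute a nontrivial idempotent in $\Or$ reducing to a nonzero idempotent of $R/\p$ inside the radical, also impossible. Let $\pi_S$ be a uniformizer of $S$. Since basic quaternion orders are locally free of rank~$2$ over $S$, write $\Or=S\oplus Se$ with $e\alpha=\bar{\alpha}e$ for $\alpha\in S$; using the surjection $S\twoheadrightarrow\Or/\!\rad\Or$, I may translate $e$ by an element of $S$ to arrange $e\in\rad\Or$ (note $e^2 = -\nrd(e) \in \pi R$ since $e$ is not a unit of $\Or$).

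I then claim $\rad\Or=\Or\pi_S+\Or e$. Containment $\supseteq$ is immediate from $\pi_S,e\in\rad\Or$. For $\subseteq$, the right-hand side contains $S\pi_S + Se = \pi_S S \oplus Se$, and a length count inside $\Or = S\oplus Se$ yields $\lambda(\Or/(\pi_S S \oplus Se)) = \lambda(S/\pi_S S) = 1 = \lambda(\Or/\!\rad\Or)$, forcing equality. The right-ideal statement follows by the symmetric computation with $\pi_S\Or + e\Or$. The main obstacle is the case analysis identifying which $S$ can sit inside which residue type of $\Or$ and then establishing the normalized decomposition $\Or=S\oplus Se$ with $e\alpha=\bar\alpha e$—both are standard structural facts about basic quaternion orders but need some attention when $2\notin R^\times$.
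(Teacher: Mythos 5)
Your proof is correct in outline but takes a genuinely different, and in one respect weaker, route from the paper's. The paper's proof never mentions basicity: it works directly from the two Gorenstein hypotheses. Since $\Or$ is Gorenstein, $\Or^{\#}=\Or\alpha$ is a principal $\Or$-module; a duality argument then shows $\Or^\natural/\Or$ is cyclic, so $\Or^\natural = \Or + \beta\Or$. Since $\Or^\natural$ is Gorenstein and $O_{\mathsf L}(J)=\Or^\natural$, the ideal $J=\rad\Or$ is invertible and hence principal over $\Or^\natural$, say $J=\gamma\Or^\natural$. Multiplying these together yields $J = \gamma\Or + \gamma\beta\Or$. This is short, abstract, and self-contained within Section~\ref{sec:variant}. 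You instead first invoke the remark at the end of Section~\ref{sec:locbass1} to conclude $\Or$ is basic, then exhibit generators of $\rad\Or$ case by case from an explicit $S$-module decomposition. The logic is sound, but it defeats the stated purpose of Section~\ref{sec:variant}: that section is meant to provide an argument for ``Bass $\Rightarrow$ basic'' \emph{independent} of the proof in Section~\ref{sec:locbass1}, yet the remark you invoke is itself a corollary of that earlier proof. Moreover, once you grant that $\Or$ is basic with $S$ a local quadratic order, there is a far shorter finish than your classification: any $\Or$-ideal (in particular $\rad\Or$) is a torsion-free $S$-lattice of rank $2$, hence free of rank $2$ over the local PID $S$, hence $2$-generated over $S$ and a fortiori over $\Or$. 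This is exactly the \ref{lb:basic}$\Rightarrow$\ref{lb:twogen}$\Rightarrow$\ref{lb:radical} chain already in the proof of Corollary~\ref{cor:local-bass}.

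Two concrete flaws in the execution. First, in the residually ramified case you claim to choose $e$ with $\Or = S\oplus Se$, $e\alpha=\bar\alpha e$, \emph{and} $e\in\rad\Or$ ``by translating by an element of $S$''; but translating $e\mapsto e-s$ gives $(e-s)\alpha - \bar\alpha(e-s) = s(\bar\alpha-\alpha)$, which is nonzero for $s\ne 0$, so the twisted commutation relation cannot be preserved. Fortunately your final length count only uses $\Or=S\oplus Se$ and $\pi_S,e\in\rad\Or$; the relation $e\alpha=\bar\alpha e$ and the parenthetical $e^2=-\nrd(e)$ should simply be dropped. Second, passing from ``$\Or$ is free of rank $2$ over $S$'' to ``$\Or = S\oplus Se$ with $S$ a direct summand'' deserves a line (for instance: $S=\Or\cap K$, so $\Or/S$ is a torsion-free, hence free, rank-$1$ module over the PID $S$, and the short exact sequence splits). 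These are repairable, but they illustrate why the paper's duality argument is the cleaner route here.
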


\begin{proof}
  If $\Or$ is hereditary, then $\rad \Or$ is principal \cite[Main Theorems 21.1.4 and 16.6.1]{Voight:quat}.
  If $\Or$ is Eichler, it is easily seen from an explicit description of $\Or$ that $\rad \Or$ is generated by two elements \cite[23.4.15]{Voight:quat}.
  We thus assume without restriction that $\Or$ is a local ring.

  Let $J=\rad \Or$. Then $\Or^{\natural} = (J\Or^\#)^\#$ with $\Or^\# = \Or\alpha$ for some $\alpha \in B^\times$ \cite[Proposition 24.4.12]{Voight:quat}.
  (using that $\Or$ is Gorenstein).
  Since $J\Or^\#$ is the unique maximal left [right] $\Or$-submodule of $\Or^\#$ by the proof of the same proposition, dualizing implies that there is no right [left] $\Or$-module properly between $\Or$ and $\Or^{\natural}$  \cite[Section 15.5]{Voight:quat}.
  Hence $\Or^{\natural}/\Or$ is a cylic right [left] $\Or$-module.
  So $\Or^{\natural} = \Or + \beta \Or = \Or + \Or\beta'$ with $\beta$,~$\beta' \in \Or^{\natural}$.

  Since $\Or^{\natural}$ is also Gorenstein and $O_{\mathsf L}(J) =
  \Or^{\natural}$, the ideal $J$ is invertible and hence principal \cite[Proposition 24.2.3 and Main Theorem 16.6.1]{Voight:quat}.
  So $J = \gamma \Or^{\natural} = \Or^{\natural} \gamma'$.
  Altogether $J = \gamma \Or + \gamma\beta\Or = \Or \gamma' + \Or \gamma' \beta'$.
\end{proof}

We are now in a position to give the promised comprehensive characterization of local Bass orders.

\begin{proof}[Proof of Corollary~\textup{\ref{cor:local-bass}}]
  \ref{lb:bass}$\,\Rightarrow\,$\ref{lb:idealizer} holds by definition; \ref{lb:idealizer}$\,\Rightarrow\,$\ref{lb:radical} is Proposition~\ref{prop:overorders-2gen}; \ref{lb:radical}$\,\Rightarrow\,$\ref{lb:basic} holds by Theorem~\ref{thm:2gen-basic} for residually ramified orders, in any other case $\Or$ is basic without any assumption on $\rad \Or$ by Lemma~\ref{lem:onlyresram}.
  Propositon~\ref{prop:onedir} shows \ref{lb:basic}$\,\Rightarrow\,$\ref{lb:bass}.

  \ref{lb:basic}$\,\Rightarrow\,$\ref{lb:twogen}
  Let $S$ be a maximal order of a $F$-quadratic algebra contained in $\Or$.
  Any $\Or$-ideal $I$ is an $S$-lattice of rank $2$.
  Since $S$ is local, $I$ is a free $S$-lattice of rank two.
  Thus $I$ is generated by two elements over $S$ and also over $\Or$ (as left or right ideal).
  \ref{lb:twogen}$\,\Rightarrow\,$\ref{lb:radical} is trivial.

  \ref{lb:bass}$\,\Leftrightarrow\,$\ref{lb:directsum} by \cite[Proposition 24.5.3]{Voight:quat}. The implications \ref{lb:twogen}$\,\Rightarrow\,$\ref{lb:bass}$\,\Rightarrow\,$\ref{lb:directsum} hold in large generality, whereas \ref{lb:directsum}$\,\Rightarrow\,$\ref{lb:twogen} for quaternion orders is a result of Drozd, Kiri\v{c}enko , and Ro\u{i}ter.

  \ref{lb:basic}$\,\Leftrightarrow\,$\ref{lb:satz8} by Lemma~\ref{lem:local-satz8} (a local version of a result of Eichler \cite[Satz 8]{Eichler:untersuch}).
\end{proof}

\section{Basic orders under strong approximation} \label{sec:strongapprox}

In this section, we prove Theorem \ref{thm:thm2} when strong approximation applies. We start by showing that basic is a local property, i.e., an $R$-order $\Or$ is basic if and only if its localization at every nonzero prime $\frakp$ of $R$ is basic. 

\subsection*{Setup}

Moving now from the local to the global setting, we use the following notation.  Let $F$ be a global field and let $R=R_{(T)} \subseteq F$ be the ring of $T$-integers for a nonempty finite set $T$ of places of $F$ containing the archimedean places. Let $B$ be a quaternion algebra over $F$, and let $\Or \subseteq B$ be an $R$-order.  For a prime $\p \subseteq R$, define the normalized valuation $v_\p$ with valuation ring $R_{(\p)} \subseteq F$, and similarly define $\Or_{(\p)} \colonequals \Or \otimes_R R_{(\p)} \subseteq B$.

Let $\n \subseteq R$ be a nonzero ideal.  Let $\Id_{\n}(R)$ be the group of all fractional $R$-ideals $\mathfrak{a}$ of $F$ that are coprime to $\n$ (i.e., $v_\p(\mathfrak{a})=0$ for all $\p \mid \n$). Let $P_{\n,1}(R) \leq \Id_\n(R)$ be the subgroup of principal fractional $R$-ideals $aR$ where $a \in F^\times$ satisfies
\begin{itemize}
    \item $v_\p(a-1)\geq v_\p(\mathfrak{n})$ for all $\p \mid \n$, and 
    \item $a_v>0$ for every real place $v$, where $a_v$ is the image of $a$ under $v$. 
\end{itemize}
The \defi{ray class group} with modulus $\n$ is the quotient 
\[ \Cl_{\n} R \colonequals \Id_{\n}(R)/P_{\n,1}(R). \]
By class field theory (see Tate \cite[\S 5]{Tate2010}), the maximal abelian extension $L_{\n} \supseteq F$ of conductor~$\n$, called the \defi{(narrow) ray class field} of conductor $\n$, has $\Gal(L_{\n}\,|\,F) \simeq \Cl_{\n}(R)$ under the Artin reciprocity map. We call $L_{(1)}$ the \defi{narrow Hilbert class field} of $F$.  We have a natural surjective map $\Cl_\n(R) \to \Cl R$ and $\#\Cl_\n(R)<\infty$, so $L_\n \supseteq F$ is a finite extension.

\subsection*{Building global quadratic orders}

Using discriminants, we show how we can combine local (embedded) quadratic orders to construct a candidate global quadratic order which we may try to embed in $\calO$. 
Recall that free quadratic $R$-orders are, via the discriminant, in bijection with elements $d \in R/R^{\times 2}$ that are squares in $R/4R$.

\begin{lemma}\label{lem4} 
Suppose that $\calO_{(\p)}$ is basic for all $\p$.  Then there exist infinitely many $d \in R/R^{\times 2}$, corresponding to integrally closed quadratic $R$-orders $S$ \textup{(}up to isomorphism\textup{)}, such that $S_{(\p)}$ embeds in $\Or_{(\p)}$.
\end{lemma}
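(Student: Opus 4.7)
The plan is to assemble a global discriminant $d \in R$ by approximating locally prescribed square classes, verify that the resulting free quadratic $R$-algebra $S$ is integrally closed and admits local embeddings $S_{(\p)} \hookrightarrow \calO_{(\p)}$ at every prime, and then vary the construction to obtain infinitely many nonisomorphic $S$.

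Let $\Sigma$ be the finite set of primes of $R$ at which $\calO_{(\p)}$ is not maximal, together with those primes ramified in $B$ and those above $2$. For each $\p \in \Sigma$ we prescribe a target square class $\delta_\p \in R_{(\p)}/R_{(\p)}^{\times 2}$: if $\calO_{(\p)}$ is not maximal, take $\delta_\p$ to be the discriminant of an integrally closed quadratic $R_{(\p)}$-order $T_\p \subseteq \calO_{(\p)}$ furnished by the basic hypothesis; if $\calO_{(\p)}$ is maximal and $B_\p$ is ramified, choose $\delta_\p$ so that $F_\p(\sqrt{\delta_\p})$ is the unramified quadratic extension (which embeds in the division algebra $B_\p$); and if $\p \mid 2$ with $\calO_{(\p)}$ maximal and $B_\p$ split, set $\delta_\p = 1$. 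By weak approximation in $F^\times$ we then produce $d \in R$ whose image in $R_{(\p)}/R_{(\p)}^{\times 2}$ equals $\delta_\p$ for all $\p \in \Sigma$, with a representative compatible modulo $4R_{(\p)}$ at primes above $2$ (so that $d$ is a square in $R/4R$ and thus is the discriminant of a free quadratic $R$-algebra), and with $v_\p(d) \in \{0,1\}$ for each $\p \notin \Sigma$.

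The associated free quadratic $R$-algebra $S$ of discriminant $d$ is then integrally closed, since locally at every prime $\p$, $S_{(\p)}$ is either $R_{(\p)} \times R_{(\p)}$ or the valuation ring of a quadratic field extension of $F_\p$. The required local embeddings $S_{(\p)} \hookrightarrow \calO_{(\p)}$ can be checked prime-by-prime: at $\p \in \Sigma$ with $\calO_{(\p)}$ non-maximal we have $S_{(\p)} \simeq T_\p$, since integrally closed quadratic orders over a DVR are classified up to isomorphism by their discriminant class; at $\p \in \Sigma$ with $\calO_{(\p)}$ maximal and $B_\p$ ramified, the valuation ring of the unramified quadratic extension embeds in the unique maximal order of the division algebra $B_\p$; and at $\p \notin \Sigma$, $\calO_{(\p)} \simeq M_2(R_{(\p)})$ accommodates every integrally closed quadratic $R_{(\p)}$-order.

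Finally, to produce infinitely many such $d$, we repeat the construction with the additional constraint $v_\q(d) = 1$ for each prime $\q \notin \Sigma$; since $\q \notin \Sigma$, this is compatible with all the earlier constraints, and the resulting $d$'s lie in pairwise distinct classes in $R/R^{\times 2}$ as $\q$ varies over the infinitely many primes outside $\Sigma$. The main delicate point is the weak approximation step of the second paragraph, in which the $R_{(\p)}^{\times 2}$-classes at primes of $\Sigma$, the squarefreeness condition at primes outside $\Sigma$, and the square-modulo-$4R$ congruence must all be arranged simultaneously; this is standard, though it requires some care in residue characteristic~$2$.
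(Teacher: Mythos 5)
The central difficulty in this lemma is controlling the valuation $v_\p(d)$ at the \emph{infinitely many} primes $\p$ outside a finite exceptional set, so that the resulting quadratic order is integrally closed everywhere. Your proposed proof invokes ``weak approximation in $F^\times$'' to produce $d \in R$ with prescribed square classes at the primes of $\Sigma$, together with the constraint $v_\p(d) \in \{0,1\}$ for $\p \notin \Sigma$. But weak approximation gives you control only at finitely many places; it provides no mechanism to bound $v_\p(d)$ at all the remaining infinitely many primes simultaneously. An arbitrary $d$ chosen to match congruence and sign conditions at $\Sigma$ can perfectly well have $v_\p(d) \geq 2$ at some prime $\p \notin \Sigma$ away from $2$, and then $S_{(\p)}$ is a non-maximal order in a quadratic \'etale algebra (it has nontrivial conductor), so $S$ fails to be integrally closed. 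You flag the step as ``standard,'' but it is precisely where the real content lies, and weak approximation alone does not suffice.

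The paper avoids this by controlling the \emph{ideal} $dR$ globally rather than just the local square classes at finitely many primes. It sets $\mathfrak{d} = \prod_{\p \mid \discrd(\calO)} \p^{e_\p}$ encoding the local target discriminant valuations, applies Chebotarev in the Hilbert class field to find an auxiliary prime $\q \nmid 2\discrd(\calO)$ with $\mathfrak{d}\q = d'R$ principal, and then applies Chebotarev in a ray class field of conductor $\n$ (an ideal divisible by a high power of each $\p \mid 2\discrd(\calO)$, in particular by $4$) to produce a prime element $\pi \equiv a \pmod{\n}$ for a carefully chosen $a$. The resulting $d = d'\pi$ generates the ideal $\mathfrak{d}\q\pi$, which by construction is squarefree away from $2\discrd(\calO)$, has the desired valuations at $\p \mid \discrd(\calO)$, and matches the local square classes modulo high powers at $\p \mid 2\discrd(\calO)$. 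This global factorization is what makes $v_\p(d) \le 1$ hold at \emph{all} the infinitely many primes $\p \nmid 2\discrd(\calO)$. To repair your argument you would need to replace the weak approximation step by an argument of this class-field-theoretic flavor (or some other device that controls $dR$ as an ideal), not merely ``more care in residue characteristic $2$.''
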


\begin{proof}
For each $\p$, let $S(\p)$ be an integrally closed quadratic $R_{(\p)}$-order in $\Or_{(\p)}$ and let $d(\p) \colonequals \disc(S(\p))$.
For each $\p \mid \disc \Or$, let $e_\p \colonequals v_\p(d(\p))$. If $\p \nmid 2R$, then $e_\p \le 1$ by maximality of $S(\p)$.  Define 
\[ \mathfrak{d} \colonequals \prod_{\p \mid \discrd(\Or)} \p^{e_\p}. \] 
By the Chebotarev density theorem applied to the Hilbert class field of $F$, there exist infinitely many prime ideals $\mathfrak{q} \subseteq R$ such that $\mathfrak{q} \nmid 2\discrd(\Or)$ and $\mathfrak{d}\mathfrak{q}=d'R$ is principal.

Let 
\begin{equation} \label{eqn:tpdef}
t_\p \colonequals 
\begin{cases} 1, & \text{if $\p \nmid 2R$;}  \\
\max\{2v_{\p}(2)+1, e_\p\}, & \text{if $\p \mid 2R$}
\end{cases}
\end{equation}
and let
\begin{equation}  \label{eqn:ndef}
\n \colonequals \prod_{\p \mid 2\discrd(\Or)} \p^{t_\p}.
\end{equation}
By the Chinese Remainder Theorem, there is an element $a \in R$ such that $a \equiv d(\p) (d')^{-1} \pmod{\p^{t_\p}}$ for each $\p \mid 2\discrd(\Or)$. By the Chebotarev density theorem applied to the ray class field of $F$ of conductor $\n$, there exist infinitely many prime elements $\pi \in R$ such that $\pi \equiv a \mod \n$. 

Define $d \colonequals d'\pi$, so $dR=\mathfrak{d'}\mathfrak{q}\pi$. Then for $\p \mid 2\discrd(\Or)$, we have $d=u_\p d(\p)$, where $u_\p=d'\pi d(\p)^{-1} \equiv 1 \pmod \n$. Because $4 \mid \n$, the element $d$ is a square in $R/4R$.  Let $S$ be the (free) quadratic $R$-order of discriminant $d$.  Then $S_{(\p)}\simeq S(\p)$ for $\p \mid 2\discrd(\Or)$, which is integrally closed. For $\p \nmid 2\discrd(\Or)$, we have $S_{(\p)} \hookrightarrow \Or_{(\p)} \simeq \Ma_2(R_{(\p)})$, and $S_{(\p)}$ is integrally closed because $v_\p(d) \leq 1$. Therefore, $S_{(\p)}$ is integrally closed for each prime $\p$, so $S$ is integrally closed. Since there were infinitely many choices for primes $\mathfrak{q}$ and $\pi$, there are infinitely many choices for $S$.
\end{proof}

\subsection*{Selectivity conditions}

We must now show that we can choose $S$ in Lemma \ref{lem4} such that $S \hookrightarrow \Or$.  

\begin{lemma} \label{lem5}  
Given $\calO$, for all but finitely many integrally closed quadratic $R$-orders $S$ we have $S \hookrightarrow \calO$ if and only if $S_{(\p)} \hookrightarrow \calO_{(\p)}$ for all primes $\p$ of $R$.
\end{lemma}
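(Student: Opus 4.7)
The forward implication is immediate, as localizing an embedding $S \hookrightarrow \calO$ at each prime $\p$ of $R$ yields $S_{(\p)} \hookrightarrow \calO_{(\p)}$. The content of the lemma is in the converse, a local-to-global principle for embeddings of commutative quadratic orders into $\calO$.

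The plan is to invoke the \emph{selectivity} theorem for embeddings of quadratic orders into quaternion orders, which applies here because we are in the regime where strong approximation holds for $B^\times$ (Eichler's condition). That theorem provides the following dichotomy: if an integrally closed quadratic $R$-order $S$ embeds into $\calO_{(\p)}$ for every prime $\p$ of $R$, then either $S$ embeds globally into $\calO$, or $S$ is \emph{selective} with respect to $\calO$, in which case $K = FS$ must be a separable quadratic field extension of $F$ contained in a certain finite abelian extension $L/F$ canonically attached to $(B,\calO)$---a ray class field whose conductor is cut out by the reduced discriminant of $\calO$ together with the archimedean ramification data of $B$. The split étale case $S \simeq R \times R$, not itself captured by this dichotomy, is handled directly: local embedding everywhere forces $B$ to be split at every finite place, and a standard Eichler/strong-approximation argument then produces the global embedding.

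The finiteness assertion in the lemma now reduces to a counting step. Since $[L:F] < \infty$, the extension $L/F$ contains only finitely many quadratic subfields $K$. For each such $K$, the integrally closed quadratic $R$-order sitting in $K$ is uniquely determined as the integral closure of $R$ in $K$, namely the ring of $T$-integers of $K$. Consequently only finitely many isomorphism classes of integrally closed quadratic $R$-orders $S$ can fail the local-to-global principle for embedding into $\calO$, which is exactly the statement of the lemma.

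The main obstacle I anticipate is invoking the selectivity theorem in the precise generality required here---uniformly across the number-field setting and the function-field settings under consideration---and verifying that the selective $K$'s are indeed indexed by quadratic subfields of a single fixed finite abelian extension of $F$, rather than by some unbounded parameter depending on $S$. Once this is secured, the finiteness of the exceptional set follows immediately from $[L:F] < \infty$.
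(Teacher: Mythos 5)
Your proposal is correct and takes essentially the same route as the paper: invoke the selectivity theorem (the paper cites Voight, Main Theorem 31.1.7, giving a fixed finite extension $L = H_{GN(\calO)} \supseteq F$ outside of whose quadratic subfields local-to-global embedding holds), then observe that $L$ has only finitely many quadratic subfields and each such $K$ determines a unique integrally closed quadratic $R$-order, so the exceptional $S$ are finite in number. Your explicit treatment of the split case $S \simeq R \times R$ is a harmless refinement; in the paper's phrasing it is handled vacuously, since $FS = F \times F$ is not a field and hence never a subfield of $L$.
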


\begin{proof}
Let $\mathscr{L}$ be the set of integrally closed quadratic orders $S$ (up to isomorphism) such that $S_{(\p)} \hookrightarrow\Or_{(\p)}$. We refer to Voight \cite[Main Theorem 31.1.7]{Voight:quat}:
there exists a finite extension $L \colonequals H_{GN(\Or)} \supseteq F$ with the property that $S \in \mathscr{L}$ embeds in $\Or$ whenever $K\colonequals \ofrac(S)$ is \emph{not} a subfield of $L$. As there are only finitely many subfields $K \subseteq L$, only finitely many $S \in \mathscr{S}$ will not embed in $\calO$.
\end{proof}

\begin{lemma} \label{lem7} Suppose that $B$ is $T$-indefinite, and suppose $\Or_{(\p)}$ is basic for every prime $\p$ of $R$. Then $\Or$ contains infinitely many nonisomorphic integrally closed quadratic $R$-orders.
\end{lemma}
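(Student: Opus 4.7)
The plan is to combine the two preceding lemmas directly. First, I would apply Lemma~\ref{lem4} using the hypothesis that $\Or_{(\p)}$ is basic for every prime $\p$ of $R$; this produces an infinite family $\mathscr{F}$ of pairwise nonisomorphic free integrally closed quadratic $R$-orders $S$, indexed by distinct classes $d \in R/R^{\times 2}$, each satisfying $S_{(\p)} \hookrightarrow \Or_{(\p)}$ for every prime $\p$. Distinct discriminant classes yield nonisomorphic $S$ since the discriminant is an invariant of the isomorphism class of a free quadratic $R$-order.

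Next, I would pass from these local embeddings to global ones via Lemma~\ref{lem5}. That lemma, whose underlying selectivity input is available because $B$ is $T$-indefinite, produces a finite extension $L/F$ such that every $S \in \mathscr{F}$ with fraction algebra $K = \ofrac(S)$ not contained in $L$ satisfies $S \hookrightarrow \Or$. Since $L/F$ is a finite extension, only finitely many quadratic étale $F$-subalgebras of $L$ exist, and an integrally closed quadratic $R$-order is determined up to isomorphism by its quadratic étale fraction algebra; hence only finitely many members of $\mathscr{F}$ can fail to embed globally into $\Or$.

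Removing this finite exceptional subset from $\mathscr{F}$ leaves infinitely many nonisomorphic integrally closed quadratic $R$-orders that embed into $\Or$, which is the desired conclusion. I do not anticipate a real obstacle in executing this plan: the substantive content---the global existence of locally embeddable $S$ via ray class fields and Chebotarev density, and the selectivity bound---has already been absorbed into Lemmas~\ref{lem4} and~\ref{lem5}. The only point requiring care is the bookkeeping that verifies that the infinite family from Lemma~\ref{lem4} survives the finite subtraction coming from Lemma~\ref{lem5}, together with the observation (used above) that distinct $d$ give nonisomorphic orders.
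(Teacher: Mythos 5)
Your proposal matches the paper's proof: apply Lemma~\ref{lem4} to produce infinitely many pairwise nonisomorphic integrally closed quadratic $R$-orders with local embeddings everywhere, then invoke Lemma~\ref{lem5} to conclude that all but finitely many of them embed globally. The extra bookkeeping you add (distinct $d$ give nonisomorphic $S$, and an integrally closed $S$ is determined by $K = \ofrac(S)$) is correct and merely makes explicit what the paper leaves implicit.
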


\begin{proof}
    Suppose that $\Or_{(\p)}$ is basic for every prime $\p$ of $R$.
    Then, $\Or_{(\p)}$ contains a maximal commutative $R_{(\p)}$-order for every prime $\p$.
    By Lemma \ref{lem4}, there exist infinitely many $d \in R/R^{\times 2}$ such that the corresponding quadratic order $S_d$ is integrally closed and $(S_d)_{(\p)} \hookrightarrow \Or_{(\p)}$ for all $\p$.
    For all but finitely many such choices of $d$, we have an embedding $S_{d} \hookrightarrow \Or$.
\end{proof}

\subsection*{Proof of theorem}

With these lemmas in hand, we now prove Theorem \ref{thm:thm2} under the hypothesis that $B$ is $T$-indefinite (and $\# T < \infty$).

\begin{proof}[Proof of Theorem \textup{\ref{thm:thm2}}, $B$ is $T$-indefinite and $\#T < \infty$]
First, part (a).  If $\Or_{(\p)}$ is basic for every prime $\p$ of $R$, then $\Or$ contains an integrally closed quadratic $R$-order by Lemma \ref{lem7}.
Conversely, if $\Or$ is basic, then it contains a maximal commutative $R$-order $S$.
Then, the localization $S_{(\p)} \colonequals S \otimes R_{(\p)}$ at every prime $\p$ is a maximal $R_{(\p)}$-order in $\Or_{(\p)}$ by the local-global dictionary for lattices, so $\Or_{(\p)}$ is basic for every prime $\p$ of $R$.

  Being Bass is a local property, and local orders are basic if and only if they are Bass by Theorem~\ref{thm1}, so (b) follows from (a).
\end{proof}

This proof gives in fact a bit more.

\begin{corollary} \label{cor:inf-indefinite}
 Suppose that $B$ is $T$-indefinite and let $\Or \subseteq B$ be an $R$-order. If $\Or$ is basic, then $\calO$ contains infinitely many nonisomorphic integrally closed quadratic $R$-orders.
\end{corollary}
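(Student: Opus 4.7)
The plan is to reduce the statement to Lemma~\ref{lem7}, which already provides the ``infinitely many'' conclusion under the weaker hypothesis that $\Or_{(\p)}$ is basic for every prime $\p$. The only work is to propagate the global hypothesis ``$\Or$ is basic'' down to its localizations, and this is the easy direction of Theorem~\ref{thm:thm2}(a).

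Concretely, suppose $\Or$ is basic and let $S \subseteq \Or$ be a quadratic $R$-algebra that is integrally closed in its total quotient ring $FS$. Localizing at an arbitrary prime $\p$ of $R$ gives $S_{(\p)} \subseteq \Or_{(\p)}$, and by the local-global dictionary for lattices, $S_{(\p)}$ is integrally closed in $FS$ (maximality of a quadratic order is a local property, just as in the proof of Theorem~\ref{thm:thm2}(a) given in the excerpt). Hence $\Or_{(\p)}$ is basic for every prime $\p$ of $R$.

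Now the hypothesis of Lemma~\ref{lem7} is satisfied: $B$ is $T$-indefinite and $\Or_{(\p)}$ is basic for every $\p$. Applying that lemma directly yields infinitely many nonisomorphic integrally closed quadratic $R$-orders embedding in $\Or$, which is the desired conclusion. Since the two inputs are already in place, there is no real obstacle to overcome, and the argument is essentially a one-line citation.
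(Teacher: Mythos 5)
Your proof is correct and matches the paper's argument exactly: the paper also proves the corollary by invoking the easy (localization) direction of Theorem~\ref{thm:thm2}(a) and then applying Lemma~\ref{lem7}. Spelling out the local--global step for the maximal quadratic order $S$, as you do, is precisely the content of that direction of Theorem~\ref{thm:thm2}(a) as proved in Section~\ref{sec:strongapprox}.
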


\begin{proof}
   Combine Theorem \ref{thm:thm2}(a) with Lemma \ref{lem7}.
\end{proof}

\section{Basic orders and definite ternary theta series} \label{sec:weak}

In this section, we finish the proof of Theorem \ref{thm:thm2} in the remaining case of a $T$-definite quaternion algebra under some hypotheses.  For this purpose, we replace the application of strong approximation with a statement on representations of ternary quadratic forms.

\subsection*{Ternary representations}

As above, let $F$ be a global field, let $T$ be a nonempty finite set of places of $F$ containing the archimedean places, and let $R=R_{(T)} \subseteq F$ be the ring of $T$-integers in $F$.  For nonzero $a \in R$, we write $\sfN(a)\colonequals \#(R/aR)$ for the \defi{absolute norm} of $a$.

\begin{conj}[Ternary representation] \label{conj:ternary}
Let $Q \colon M \to R$ be a nondegenerate ternary quadratic form over $R=R_{(T)}$ such that $Q_v$ is anisotropic for all $v \in T$.  Then there exists $c_Q \in \R_{>0}$ such that every squarefree $a \in R$ with $\sfN(a) \geq c_Q$ is represented by $Q$ if and only if $a$ is represented by the completion $Q_v$ for all places $v$ of $F$.
\end{conj}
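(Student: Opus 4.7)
The plan is to approach Conjecture~\ref{conj:ternary} via the theta series method, following the template developed by Duke and Schulze-Pillot in the case $R=\Z$. The basic strategy is to decompose the representation count into an Eisenstein contribution, which encodes precisely the local representability data, and a cusp form contribution, and to show that the former dominates once $\sfN(a)$ is sufficiently large.

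First, I would reduce the statement to a quantitative assertion about the representation number $r_Q(a) \colonequals \#\{x \in M : Q(x)=a\}$, which is finite since $Q$ is anisotropic at every $v \in T$. Form the theta series
\[
\theta_Q(\tau) = \sum_{x \in M} \mathbf{e}\bigl(Q(x)\tau\bigr),
\]
a Hilbert modular form of parallel weight $3/2$ over $F$. Averaging over the genus of $Q$ yields $\theta_{\mathrm{gen}(Q)}$ and a decomposition
\[
\theta_Q = \theta_{\mathrm{gen}(Q)} + \bigl(\theta_Q - \theta_{\mathrm{gen}(Q)}\bigr),
\]
where the first summand lies in the Eisenstein subspace (by the Siegel--Weil formula) and the second is a cusp form.

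Next, invoking Siegel's mass formula, the $a$-th Fourier coefficient of $\theta_{\mathrm{gen}(Q)}$ factors, up to an explicit archimedean factor of size $\sfN(a)^{1/2}$, as a product $\prod_v \alpha_v(Q,a)$ of local representation densities. For squarefree $a$ that is everywhere locally represented, each local factor $\alpha_v(Q,a)$ is bounded below by an explicit positive constant depending only on $Q$; hence the Eisenstein term contributes $\gg_Q \sfN(a)^{1/2-\varepsilon}$. It then remains to bound the cusp form contribution by $O\bigl(\sfN(a)^{1/2-\delta}\bigr)$ for some fixed $\delta>0$. Such a sub-convex bound, of Duke type, is known over $\Q$ by work of Iwaniec and Duke, and has been extended to totally real number fields by Blomer and Harcos. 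Comparing the two estimates yields $r_Q(a)>0$ for all squarefree $a$ with $\sfN(a) \geq c_Q$.

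The principal obstacle lies in the sub-convex bound on Fourier coefficients of half-integral weight cusp forms over a general global field. While the Eisenstein analysis, the Siegel mass formula, and the local density computations extend rather routinely, a full Duke--Iwaniec style bound is only available in the number field setting; the function-field setting, which is precisely the case where Theorem~\ref{thm:thm2} is currently missing for $T$-definite orders, would require either the corresponding automorphic input for metaplectic forms over function fields or a rather different geometric argument. A secondary technical point is the need to keep the genus theory effective so that $c_Q$ can in principle be computed, which in turn requires effective bounds on the local densities $\alpha_v(Q,a)$ at the finitely many ``bad'' places; this is standard but should be checked carefully when the residue characteristic divides $2 \disc(Q)$.
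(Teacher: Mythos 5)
This statement is labeled a \emph{conjecture} in the paper precisely because no proof of it in full generality is known: the authors cite Blomer--Harcos for the number-field case with $T$ the archimedean places and Altu\v{g}--Tsimerman for $F=\F_p(t)$, $p\equiv 1\pmod 4$, $T=\{\infty\}$, and explicitly flag that the missing function-field cases are why their global theorem for $T$-definite orders is conditional. Your sketch is not so much a proof as an accurate description of the strategy underlying those cited results: theta series, the Eisenstein/cuspidal decomposition via Siegel--Weil, positive lower bounds on local densities for squarefree, everywhere-locally-represented $a$, and a Duke-type power-saving bound on the cuspidal coefficients. You have also correctly isolated the genuine obstruction, namely the absence of a sub-convex bound for metaplectic Fourier coefficients over general global function fields, which is exactly what the paper says is lacking.

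Two details in the number-field case deserve a flag even though you treat that case as ``done.'' First, Blomer--Harcos is stated for positive definite forms; the paper reduces the definite (not necessarily positive definite) case to it by multiplying $Q$ by two prime elements with appropriate signs, a small but non-trivial trick you should make explicit rather than subsume under ``routine.'' Second, the paper's statement allows $T$ to be strictly larger than the set of archimedean places, so $R=R_{(T)}$ is only an $S$-integer ring; Blomer--Harcos works over the full ring of integers, and the reduction (the paper mentions ``clearing denominators by an appropriate square'' in a remark) is also worth spelling out, since one must preserve the squarefree condition and the local representability data at the places one has inverted. None of this changes the verdict that the general conjecture remains open, so the ``proposal'' is best read as a correct summary of the expected method and its known limits rather than a proof.
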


There are number field and function field cases of Conjecture \ref{conj:ternary} to consider;  we now present results in the cases where the conjecture holds.  

\begin{theorem}[Blomer--Harcos]\label{thm3} 
When $F$ is a number field, the ternary representation conjecture \textup{(}Conjecture \textup{\ref{conj:ternary}}\textup{)} holds for $T=\{v:v \mid \infty\}$ the set of archimedean places with an ineffective constant $c_Q$.
\end{theorem}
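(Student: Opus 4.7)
The plan is to deduce the statement from the theorem of Blomer--Harcos on representations of squarefree integers by totally definite ternary quadratic forms over number fields; their argument proceeds via the spectral decomposition of the theta series attached to $Q$. First, I would attach to $Q$ the theta series $\theta_Q(z) = \sum_{x \in M} \mathbf{e}(Q(x) z)$, which is a Hilbert modular form of parallel weight $3/2$ and of level determined by the discriminant of $Q$. Its $a$-th Fourier coefficient equals $r_Q(a)$, the number of representations of $a$ by $Q$, so the task reduces to showing $r_Q(a) > 0$ for squarefree $a$ that is locally represented everywhere, whenever $\sfN(a)$ is sufficiently large.

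Next, I would decompose $\theta_Q = E_Q + g_Q$, where $E_Q$ is the genus theta series (the mass-weighted average of $\theta_{Q'}$ over $Q'$ in the genus of $Q$) and $g_Q$ lies in the orthogonal complement, which is cuspidal. By Siegel's mass formula, the $a$-th Fourier coefficient of $E_Q$ is a product of local representation densities times $\sfN(a)^{1/2}$; for squarefree $a$ that is locally represented by $Q$, this product is bounded below by $\sfN(a)^{1/2 - \epsilon}$ for every $\epsilon > 0$. The ineffectivity of $c_Q$ enters at exactly this point, via Siegel's ineffective lower bound for residues of Dirichlet $L$-functions attached to quadratic characters of $F$.

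For the cuspidal contribution, I would invoke a Waldspurger / Kohnen--Zagier type correspondence, which identifies $|c_{g_Q}(a)|^2$ (for squarefree $a$) with central values of twisted $L$-functions of an associated integral-weight Hilbert modular form obtained from $g_Q$ by the Shimura lift. Applying the subconvex bounds of Blomer--Harcos for these central $L$-values yields $|c_{g_Q}(a)| \ll \sfN(a)^{1/2 - \delta}$ for some absolute $\delta > 0$, with implied constant depending only on $Q$. Combining the two estimates gives
\[ r_Q(a) \geq c_1 \sfN(a)^{1/2 - \epsilon} - c_2 \sfN(a)^{1/2 - \delta} > 0 \]
once $\sfN(a)$ exceeds some threshold $c_Q$, proving the theorem.

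The main obstacle is the subconvexity estimate in the third step: over $\mathbb{Q}$ it goes back to Iwaniec and Duke, but the uniform version over number fields needed here is precisely the technical heart of the Blomer--Harcos work and cannot be avoided by elementary means. A secondary — and unavoidable — obstacle is the ineffectivity of the constant $c_Q$, which is built into any appeal to Siegel's theorem; removing it would require breaking the Siegel--Brauer barrier.
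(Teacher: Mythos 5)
Your proposal re-derives the content of the Blomer--Harcos theorem from first principles (theta series, genus/Eisenstein main term via Siegel's mass formula and Siegel's ineffective lower bound, cuspidal error term via Shimura lift and subconvexity). That is a correct high-level summary of how \cite{BlomerHarcos} is proved, but it is a genuinely different route from the paper, which does not re-prove anything: the paper's entire proof of Theorem \ref{thm3} is a direct citation of \cite[Corollary 2]{BlomerHarcos}, together with a short reduction from the case that $Q$ is totally definite (anisotropic at every real place, but possibly negative definite at some of them) to the case that $Q$ is totally positive definite, which is the hypothesis in Blomer--Harcos. That sign reduction is the only original content in the paper's proof, and it is the one step your proposal omits. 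Concretely: Conjecture \ref{conj:ternary} with $T = \{v : v \mid \infty\}$ only requires $Q_v$ anisotropic at each archimedean place, so $Q$ may be negative definite at some real embeddings; the paper fixes this by multiplying $Q$ by prime elements of $R$ with prescribed signs so as to land in the positive definite setting, while keeping track of how this rescaling interacts with the squarefree condition on the represented values. Your sketch is not wrong, but it buys you nothing over a citation and leaves the actual reduction unaddressed; if you wanted to present a self-contained proof you would additionally need to fill in the sign reduction, and you should note that for $F$ with a complex place the hypothesis of the conjecture is vacuous since no nondegenerate ternary form is anisotropic over $\C$.
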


\begin{proof}
This is almost the statement given by Blomer--Harcos \cite[Corollary 2]{BlomerHarcos}, but where it is assumed that $Q$ is positive definite: we recover the result for $Q$ definite by multiplying $Q$ by two different prime elements with appropriate signs.
\end{proof}

\begin{remark}
Using Theorem \ref{thm3}, we can show (essentially by clearing denominators by an appropriate square) that Conjecture \ref{conj:ternary} holds for all (finite sets) $T$, but we do not need this result in what follows.
\end{remark}

In the case where $F$ is a (global) function field, we know of the following partial result.

\begin{theorem}[Altu\v{g}--Tsimerman]
The ternary representation conjecture holds with an effective constant $c_Q$ when $F=\F_p(t)$ and $p \equiv 1 \pmod{4}$ and $T=\{\infty\}$.
\end{theorem}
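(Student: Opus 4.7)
The plan is to follow the classical analytic strategy for representations by ternary quadratic forms, adapted to the function field setting. Attach to $Q$ its theta series $\theta_Q(z) = \sum_{v \in M} X^{Q(v)}$, which one interprets as an automorphic form of half-integral weight $3/2$ on an appropriate Drinfeld modular variety for $F = \F_p(t)$. The $a$-th ``Fourier coefficient'' of $\theta_Q$ counts the number of representations $r_Q(a) = \#\{v \in M : Q(v)=a\}$, so the goal is to show $r_Q(a)>0$ for every squarefree $a$ with $\sfN(a)$ large enough that is locally representable at every place.

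Next, one decomposes $\theta_Q = E + f$ into its Eisenstein and cuspidal components in the space of half-integral weight forms. The $a$-th Fourier coefficient of $E$ is, by a Siegel-type mass formula, proportional to the product of local representation densities of $a$ by (the genus of) $Q$; for squarefree $a$ this coefficient grows like $\sfN(a)^{1/2}$ up to bounded local factors, and it is strictly positive precisely when $a$ is represented everywhere locally. Meanwhile, the $a$-th coefficient of $f$ is controlled, via a Waldspurger-type formula, by a central value of a twisted $L$-function attached to the Shimura lift of $f$. Here the hypothesis $p \equiv 1 \pmod 4$ (so that $-1$ is a square mod $p$) is used to set up the Shimura/metaplectic correspondence cleanly and to avoid wild ramification issues at $2$ in the function field theta machinery, while the hypothesis $T=\{\infty\}$ reduces the setup to the familiar $\F_p[t]$-setting where the relevant automorphic space has a convenient Drinfeld-modular description.

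The core estimate is then an effective subconvex (or full Ramanujan--Petersson type) bound for the cuspidal Fourier coefficients, yielding $|a\text{-th coefficient of }f| \ll_\varepsilon \sfN(a)^{1/2-\delta+\varepsilon}$ for some explicit $\delta>0$. In the function field setting, such bounds are available effectively: the full Ramanujan--Petersson conjecture for cuspidal automorphic forms on $\mathrm{GL}_2$ over $F$ is a theorem of Drinfeld (later Lafforgue), and the passage through the Shimura correspondence can be made quantitative by tracking the Waldspurger constants explicitly. Combining the Eisenstein lower bound with the cuspidal upper bound gives $r_Q(a) > 0$ as soon as $\sfN(a) \geq c_Q$, with $c_Q$ depending only on $Q$ in an effectively computable way.

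The main obstacle I expect is the effectivity of the constant $c_Q$. Turning the $\ll_\varepsilon$ bound into an explicit inequality requires tracking constants through the Shimura lift, through Waldspurger's formula, and through the function-field subconvexity estimate; each step is in principle effective over $\F_p(t)$ because all local archimedean analysis is replaced by polynomial bookkeeping, but aligning normalizations so that the final comparison $\sfN(a)^{1/2} \gg \sfN(a)^{1/2-\delta}$ yields a usable threshold is the delicate part and is presumably what Altu\v{g}--Tsimerman carry out in detail.
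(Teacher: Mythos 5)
The paper's ``proof'' of this theorem is a single line: \emph{See Altu\v{g}--Tsimerman [Corollary 1.1]}. There is therefore no argument in the paper to compare against; what you have written is an attempted reconstruction of the cited result, and it is that reconstruction that needs evaluating.

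Your general framework---theta series $\theta_Q$, decomposition into an Eisenstein series and a cusp form, a lower bound on the Eisenstein coefficient via local densities, an upper bound on the cuspidal coefficient---is the right shape and is indeed the strategy of the cited paper. But the key step you propose is wrong. You want to bound the cuspidal Fourier coefficient by applying the Shimura lift and Waldspurger's formula and then invoking the Ramanujan--Petersson theorem for $\mathrm{GL}_2$ over function fields (Drinfeld, Lafforgue). This chain does not close. Waldspurger relates the \emph{square} of the $a$-th half-integral-weight Fourier coefficient to a central $L$-value $L\bigl(\tfrac{1}{2}, F\otimes\chi_a\bigr)$ of a quadratic twist of the Shimura lift $F$; bounding that $L$-value nontrivially as $a$ varies requires a Lindel\"of-type (or at least subconvex) estimate in the twist aspect, which is \emph{not} a consequence of the Ramanujan bound on the Satake parameters of $F$. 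Even over a function field, where the Riemann hypothesis for these $L$-functions is a theorem, one only gets the convexity bound $L\bigl(\tfrac{1}{2}, F\otimes\chi_a\bigr) \ll \sfN(a)^{1/2+\varepsilon}$, which when fed back through Waldspurger yields no saving over the trivial bound. The actual content of Altu\v{g}--Tsimerman is a direct proof of the \emph{metaplectic} Ramanujan conjecture over $\F_p(t)$---a square-root-cancellation bound on the half-integral-weight Fourier coefficients themselves, established geometrically via Deligne's purity---which in Waldspurger's language amounts to Lindel\"of for this family of twists and is strictly stronger than anything the $\mathrm{GL}_2$-Ramanujan plus Shimura/Waldspurger route can give. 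That is the ingredient your sketch assumes but cannot supply; the hypotheses $p\equiv 1 \pmod 4$ and $T=\{\infty\}$ are constraints of that geometric construction, not the bookkeeping conveniences you suggest.
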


\begin{proof}
See Altu\v{g}--Tsimerman \cite[Corollary 1.1]{AltugTsimerman}.
\end{proof}

\subsection*{Discriminants}

As above, let $B$ be a quaternion algebra over $F$ and $\calO \subseteq B$ an $R$-order.  Define the \defi{discriminant} quadratic form on $\calO$ by 
\begin{equation}
    \begin{aligned}
    \disc \colon \Or &\rightarrow R \\
    \alpha &\mapsto \trd(\alpha)^2-4\nrd(\alpha).
    \end{aligned}
\end{equation}
We define similarly $\disc_\p \colon \Or_{(\p)} \rightarrow R_{(\p)}$ for each prime $\p$. 

We will use the following two technical lemmas on discriminants.

\begin{lemma} \label{lem:fixdisc} Let $\p \subseteq R$ be prime with $S(\p)=R_{(\p)}[\alpha_\p]$ an integrally closed quadratic order. Let $f_\p \in \Z_{>0}$ be such that $\p^{2f} \mid \disc_\p(\alpha_\p)$. Then there exists a submodule $M \subseteq \Or$ such that
\begin{enumroman}
\item $\disc(\beta) \in \p^{2f}$ for all $\beta \in M$;
\item $M_{(\q)}=\Or_{(\q)}$ for $\q \neq \p$; and
\item $S{(\p)} \subseteq M_{(\p)}$.
\end{enumroman}
\end{lemma}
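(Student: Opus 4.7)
The natural approach is to assemble $M$ from prescribed localizations via the local--global correspondence for $R$-lattices in $B$. Concretely, I would define
\[
  M_{(\p)} \colonequals S(\p) + \p^{2f}\Or_{(\p)} \ \subseteq\ \Or_{(\p)}
  \qquad\text{and}\qquad
  M_{(\q)} \colonequals \Or_{(\q)}\ \text{for } \q\neq\p,
\]
and let $M\subseteq B$ be the unique $R$-lattice with these localizations. Since each prescribed local piece sits inside $\Or_{(\cdot)}$, we get $M\subseteq\Or$. Conditions (ii) and (iii) are then immediate from the definition.

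For (i), the verification reduces to two small computations with the reduced trace and norm. First, for a general element $s = a + t\alpha_\p \in S(\p)$ with $a,t\in R_{(\p)}$, direct expansion of $\trd(s)^2 - 4\nrd(s)$ shows that all $a$-dependent terms cancel, yielding $\disc(s) = t^2 \disc_\p(\alpha_\p) \in \p^{2f}R_{(\p)}$ by hypothesis on $\alpha_\p$. Second, for $\beta = s + \pi^{2f}\gamma \in M_{(\p)}$ with $s\in S(\p)$ and $\gamma\in\Or_{(\p)}$, writing out $\trd(\beta)^2 - 4\nrd(\beta)$ gives
\[
  \disc(\beta) = \disc(s) + \pi^{2f}\bigl(2\trd(s)\trd(\gamma) - 4\trd(s\bar\gamma)\bigr) + \pi^{4f}\disc(\gamma),
\]
and each summand lies in $\p^{2f}R_{(\p)}$. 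Since $\disc(\beta)\in R$ and $R \cap \p^{2f} R_{(\p)} = \p^{2f}$, this yields (i).

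There is no real obstacle here; the lemma is essentially a bookkeeping exercise. The only point worth highlighting is the choice of exponent $2f$ (rather than $f$) in the definition of $M_{(\p)}$: the mixed term in the expansion of $\disc(s+\pi^{2f}\gamma)$ is only linear in $\pi^{2f}$, so one needs exactly this exponent to force divisibility by $\p^{2f}$ from the cross-term alone.
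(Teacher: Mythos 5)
Your proof is correct, and it takes a genuinely different (and in fact more elementary) route than the paper's. The paper exploits the $S(\p)$-module structure: it writes $\Or_{(\p)} = S(\p) + S(\p)\gamma$ (possible since a basic local order is free of rank two over its embedded integrally closed quadratic order) and sets $M(\p) = S(\p) + S(\p)\pi^{f}\gamma$, then intersects with $\Or$. With the standard choice of $\gamma$ (traceless, satisfying $\gamma s = \bar{s}\gamma$), the cross-term in $\disc(s_1 + s_2\pi^f\gamma)$ vanishes identically and the remaining correction is quadratic in $\pi^f$, so the exponent $f$ suffices. Your version $M_{(\p)} = S(\p) + \p^{2f}\Or_{(\p)}$ dispenses with the $S(\p)$-module structure entirely: you just pay the price of doubling the exponent so that the linear cross-term $\pi^{2f}\bigl(2\trd(s)\trd(\gamma) - 4\trd(s\bar\gamma)\bigr)$ already lands in $\p^{2f}R_{(\p)}$. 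Your lattice is therefore strictly smaller than the paper's (inside the paper's $M(\p)$, $S(\p) + \pi^{2f}S(\p)\gamma \subsetneq S(\p) + \pi^{f}S(\p)\gamma$), but all three conditions of the lemma---and the downstream use in Lemma 5.7---only require a lattice that is large enough to contain $S(\p)$ locally and agree with $\Or$ away from $\p$, so the weaker construction is perfectly adequate. What you give up is the sharper exponent; what you gain is an argument that needs no choice of a nice complementary generator $\gamma$ and no appeal to freeness of $\Or_{(\p)}$ over $S(\p)$, only to the hypothesis $S(\p) \subseteq \Or_{(\p)}$.

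One small bookkeeping remark: rather than invoking the local--global existence of a lattice with prescribed localizations, you can simply take $M = \bigl(S(\p) + \p^{2f}\Or_{(\p)}\bigr) \cap \Or$ directly, as the paper does with its $M(\p)$; conditions (ii) and (iii) then follow since intersecting with $\Or$ does not change the localization at any $\q$ where your local model already agrees with $\Or_{(\q)}$ or sits inside $\Or_{(\p)}$.
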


\begin{proof}
First, we have that $\Or_{(\p)}$ contains $S(\p)$, which is necessarily integrally closed. Then, $\Or_{(\p)} \simeq S(\p)+S(\p) \gamma$ is an $S(\p)$-module. Moreover, $\p R_{(\p)}=\pi R_{(\p)}$ is principal.  Define $M(\p) \colonequals S(\p) + S(\p)\pi^f\gamma$. For any $\beta \in M(\p)$, we have $\disc(\beta)\in \p^{2f}R_{(\p)}$. Then, we define $M \colonequals M(\p) \cap \Or \subseteq \Or$. Since $(\pi^fR_{(\p)} \cap R)_{(\q)}= R_{(\q)}$ for all $\q \neq \p$, we have $M_{(\q)}=\Or_{(\q)}$, and $(M(\p))_{(\p)}=M_{(\p)}$.  Also, $S(\p) \subseteq M(\p)$, so in particular, we have $S(\p) \subseteq M_{(\p)}$. 
\end{proof}

\begin{lemma} \label{lem6}
Suppose $\Or_{(\p)}$ is basic for all primes $\p$.  Then there exists an $R$-lattice $M \subseteq \Or$,  a totally negative $a \in R$, and for every prime $\p$ elements $\alpha_\p \in M_{(\p)}$ such that $R_{(\p)}[\alpha_\p]$ is integrally closed and the following conditions hold:
\begin{enumroman}
\item $a^{-1}\disc|_{M} \colon M \rightarrow R$ is a positive definite quadratic form;
\item $(a^{-1}\disc_\p)(\alpha_\p) \in R_{(\p)}$ is squarefree for every prime $\p$; and
\item $\disc(\alpha_\p) \in R_{(\p)}^\times$ for all but finitely many $\p$.
 \end{enumroman}
\end{lemma}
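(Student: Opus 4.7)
The plan is to construct $(M, a, \{\alpha_\p\})$ in coordination, combining the local basic embeddings with a class-field-theoretic choice of $a$ and with Lemma~\ref{lem:fixdisc} (plus a small variant at a single auxiliary prime) for $M$.

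\emph{Local $\alpha_\p$ and the choice of $a$.} Since $\Or_{(\p)}$ is basic, for each prime $\p$ choose $\alpha_\p \in \Or_{(\p)}$ with $S(\p) \colonequals R_{(\p)}[\alpha_\p]$ integrally closed; at primes where $2$ is a unit, normalize so $\trd(\alpha_\p) = 0$. Let $T_0$ be the (finite) set of primes dividing $2\discrd(\Or)$; for $\p \notin T_0$, pick $\alpha_\p$ so that $S(\p) \simeq R_{(\p)} \times R_{(\p)}$ or the unramified maximal quadratic $R_{(\p)}$-order, giving $\disc(\alpha_\p) \in R_{(\p)}^\times$ and hence~(iii). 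For $\p \in T_0$ set $f_\p \colonequals \lfloor v_\p(\disc(\alpha_\p))/2 \rfloor$; we want $a \in R$ totally negative with $(a) = \prod_{\p \in T_0} \p^{2f_\p}$. Since this ideal need not be principal with a totally negative generator, imitating the proof of Lemma~\ref{lem4} we invoke the Chebotarev density theorem in the narrow ray class field of $F$ of suitable conductor to produce an auxiliary prime $\q \notin T_0$, non-dyadic and coprime to $\discrd(\Or)$, such that $\q \cdot \prod_{\p \in T_0} \p^{2f_\p} = (a)$ with $a$ totally negative. Now re-choose $\alpha_\q$ to generate a ramified maximal quadratic $R_{(\q)}$-order (possible since $\Or_{(\q)}$ is split), so that $v_\q(\disc(\alpha_\q)) = 1$; then $v_\q(a^{-1}\disc(\alpha_\q)) = 0$, while $v_\p(a^{-1}\disc(\alpha_\p)) = v_\p(\disc(\alpha_\p)) - 2f_\p \in \{0,1\}$ for $\p \in T_0$, giving~(ii).

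\emph{Construction of $M$ and positivity.} Apply Lemma~\ref{lem:fixdisc} at each $\p \in T_0$ with $f_\p \geq 1$ to obtain a sublattice of $\Or$ on which $\disc(\beta) \in \p^{2f_\p}$. At $\q$, Lemma~\ref{lem:fixdisc} does not apply (it yields only even-power divisibility, whereas $v_\q(\disc(\alpha_\q)) = 1$ is odd), but a direct polarization computation using $\disc(x+y) = \disc(x) + \disc(y) + 4\trd(xy)$ for trace-zero $x,y$ shows that the rank-$3$ sublattice $R_{(\q)}\alpha_\q + \q\, \Or_{(\q)}^0$ of the trace-zero part $\Or_{(\q)}^0 \colonequals \{\alpha \in \Or_{(\q)} : \trd(\alpha) = 0\}$ satisfies $\disc(\beta) \in \q$ throughout. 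Intersecting these local sublattices with a rank-$3$ $R$-sublattice of $\Or$ transverse to the scalars (the global trace-zero part when it contains each $\alpha_\p$, otherwise a suitable splitting adapted at dyadic primes) produces the required $R$-lattice $M \subseteq \Or$ of rank $3$, with $\alpha_\p \in M_{(\p)}$ for all $\p$ and $a \mid \disc(\beta)$ for all $\beta \in M$. Since $B$ is $T$-definite, $\disc = -4\nrd$ is negative definite on the trace-zero part at each archimedean place; as $a$ is totally negative, $a^{-1}\disc|_M$ is positive definite, giving~(i).

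\emph{Main obstacle.} The main technical hurdle is the coordinated choice of the single auxiliary prime $\q$: it must simultaneously lie in the correct narrow ray class (for principality of $(a)$ with $a$ totally negative), avoid $T_0$ (so there is flexibility to choose $\alpha_\q$ generating a ramified maximal quadratic order), and support the local discriminant computation on $R_{(\q)}\alpha_\q + \q \Or_{(\q)}^0$. A single Chebotarev density argument produces infinitely many eligible $\q$.
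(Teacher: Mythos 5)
Your proposal follows the same overall strategy as the paper: use Lemma~\ref{lem:fixdisc} to shrink $\Or$ at dyadic primes so that $\disc$ acquires the square parts $\p^{2f_\p}$, pick an auxiliary prime $\q$ by Chebotarev so that $\mathfrak{b}\q$ is principal with a totally negative generator $a$, choose $\alpha_\q$ with $v_\q(\disc(\alpha_\q))=1$ and a corresponding local sublattice with $\disc\subseteq\q$, and intersect the local pieces to obtain a global $M$ with $a\mid\disc|_M$. Your substitution of the trace-zero polarization argument for the paper's explicit $2\times 2$-matrix basis at $\q$ is a harmless variant, and using $T_0=\{\p\mid 2\discrd(\Or)\}$ rather than $\{\p\mid 2R\}$ changes nothing since $f_\p=0$ at odd primes.

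The one place where you diverge is the insistence that $M$ be rank~$3$, requiring a global rank-$3$ sublattice of $\Or$ ``transverse to the scalars.'' This is exactly where your proof has a real gap: at dyadic primes you cannot always normalize $\alpha_\p$ to have $\trd(\alpha_\p)=0$, so the global trace-zero lattice $\Or^0$ need not contain $\alpha_\p$, and the ``suitable splitting adapted at dyadic primes'' is asserted rather than constructed. The paper sidesteps this entirely by letting $M=\bigcap_\p M^{(\p)}$ be a rank-$4$ lattice with $1\in M$ (each $M^{(\p)}$ is rank $4$, including the explicit $M(\q)$), so that $\alpha_\p\in M_{(\p)}$ holds trivially everywhere. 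The cost is that $\disc|_M$ is degenerate on the line $R\cdot 1\subseteq M$, so ``positive definite'' in the statement must be read as positive semidefinite with radical $R$, i.e.\ positive definite on the induced ternary form on $M/R$; that ternary form is what actually enters Conjecture~\ref{conj:ternary} in Lemma~\ref{lem8}. Your instinct that the literal statement wants a genuine ternary form is sound, but the cleaner resolution is the paper's: keep $M$ rank $4$ and pass to $M/R$, rather than trying to build a global rank-$3$ complement at every prime.
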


\begin{proof}
For $\p \mid 2R$, let $\alpha_\p \in \calO_\p$ be such that $v_\p(\alpha_\p)$ is minimal and let $f_\p$ be the largest integer such that $\p^{2f_\p} \mid \disc_{\p}(\alpha_{\p})$.  Similarly, for $\p \mid 2R$, let $M^{(\p)} \subseteq \calO$ be as in Lemma \ref{lem:fixdisc} with $\disc(\beta) \in \p^{2f_\p}R_{(\p)}$ for all $\beta \in M^{(\p)}$. 

Define $$\mathfrak{b} \colonequals \prod_{\p \mid 2R} \p^{2f_\p}.$$  By the Chebotarev density theorem applied to the narrow class group, there exists a prime $\mathfrak{q} \nmid 2\discrd(\Or)$ such that $\mathfrak{b} \q=aR$ is principal and $a$ is totally negative.  Since $\q \nmid \discrd(\calO)$, we have $\Or_{(\q)} \simeq \Ma_2(R_{(\q)})$, so there exists $\alpha_\q \in \Or_{(\q)}$ with $v_\q(\disc(\alpha_\q))=1$. Let $\varrho$ be a uniformizer for $R_{(\q)}$, define $M(\q) \subseteq \Or_{(\q)}$ to be the $R_{(\q)}$-suborder with basis 
\[ \begin{pmatrix}
1 & 0\\
0 & 1
\end{pmatrix}, \begin{pmatrix}
\varrho & 0\\
0 & -\varrho
\end{pmatrix},  \begin{pmatrix}
0 &1\\
\varrho & 0
\end{pmatrix},  \begin{pmatrix}
0 & -1\\
\varrho & 0
\end{pmatrix} \] 
all of whose discriminants are divisible by $\varrho$. Define $M^{(\q)} \colonequals M(\q) \cap \Or$.  Then $\disc(M^{(\q)}) \subseteq \q$. We also have that $(M^{(\q)})_\p \simeq \Or_{(\p)}$ for all $\p \neq \q$ since $\varrho \Or \subseteq M^{(\q)}$.

For the remaining primes $\p \nmid 2aR$, let $\alpha_\p \in \calO_\p$ be such that $v_\p(\alpha_\p)$ is minimal and let $M^{(\p)} \colonequals \calO$.

Define 
\[ M \colonequals \bigcap_{\p} M^{(\p)}. \]  
By construction we have $\alpha_\p \in M_{(\p)}$ for all $\p$.  Checking locally we have $a \mid \disc(\beta)$ for all $\beta \in M$. We also have that $M_{(\p)} =M^{(\p)}$ for all $\p \mid (a)$ and $M_{(\p)}=\Or_{(\p)}$ for all $\p \nmid a$.  Now, $a^{-1}\disc|_{M} \colon M \rightarrow R$ is positive definite (because $\disc$ was negative definite and $a$ was totally negative), so (i) holds.  

To conclude, we check (ii) and (iii).  Let $e_\p=v_\p(a^{-1}\disc_\p(\alpha_\p))$ for a prime $\p$.  If $\p \mid 2R$, then $\p \mid \mathfrak{b}$ so $e_\p \leq 1$ by construction (we removed the square part).  If $\p=\q$, by construction $e_\q=0$.  Otherwise, since $\calO_\p$ is basic and $\p \nmid 2aR$, we have $e_\p \leq 1$.  In particular, $e_p=0$ for all but finitely many $\p$, so (iii) holds.  
\end{proof}

\subsection*{Proof of theorem}

We give final lemma before proving the theorem.  

\begin{lemma} \label{lem8} 
Let $B$ be a $T$-definite quaternion algebra.  Suppose that Conjecture \textup{\ref{conj:ternary}} holds over $R$.  Let $\Or \subseteq B$ an $R$-order such that $\Or_{(\p)}$ is basic for every prime $\p$ of $R$.  Then $\Or$ contains infinitely many nonisomorphic integrally closed free quadratic $R$-orders.
\end{lemma}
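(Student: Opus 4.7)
The plan is to apply the ternary representation conjecture to the descent of $a^{-1}\disc|_M$ to a rank-$3$ quotient of the lattice $M$ produced by Lemma~\ref{lem6}, so as to manufacture infinitely many elements $\alpha \in M \subseteq \calO$ whose free quadratic $R$-orders $R[\alpha]$ are integrally closed.

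First I would set up the ternary form. Apply Lemma~\ref{lem6} to obtain an $R$-lattice $M \subseteq \calO$ (which contains $1$ by inspection of its construction), a totally negative $a \in R$, and for each prime $\p$ an element $\alpha_\p \in M_{(\p)}$ with $R_{(\p)}[\alpha_\p]$ integrally closed. Since $\disc(\alpha + r) = \disc(\alpha)$ for every $r \in R$, the form $a^{-1}\disc|_M$ vanishes on $R \cdot 1 \subseteq M$ and descends to a nondegenerate ternary quadratic form $Q \colon M/R \to R$. By Lemma~\ref{lem6}(i), $Q$ is totally positive definite, so $Q_v$ is anisotropic at every archimedean $v \in T$; at any finite $v \in T$, $B_v$ is a division algebra (as $B$ is $T$-definite), and so $\disc_v$ is anisotropic off $F_v$, giving anisotropy of $Q_v$ as well. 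Hence $Q$ satisfies the hypotheses of Conjecture~\ref{conj:ternary}.

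Next I would assemble candidate targets $c \in R$. For each prime $\p$, the image $\overline{\alpha}_\p$ of $\alpha_\p$ in $M_{(\p)}/R_{(\p)}$ gives a local representation $c_\p \colonequals Q_\p(\overline{\alpha}_\p)$, which by Lemma~\ref{lem6}(ii)--(iii) is squarefree and a unit for all but finitely many $\p$. Let $\Sigma$ be a finite set of primes containing every $\p$ at which $c_\p$ is non-unit together with every $\p$ dividing a fixed discriminant ideal of $Q$, and choose a modulus $\n$ supported on $\Sigma$ with exponents large enough to detect the squareclass of each $c_\p$ in $R_{(\p)}^\times / R_{(\p)}^{\times 2}$. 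By the Chebotarev density theorem applied to the narrow ray class field of conductor $\n$, there exist infinitely many totally positive squarefree $c \in R$ such that $c c_\p^{-1}$ lies in $R_{(\p)}^{\times 2}$ at every $\p \in \Sigma$. For such $c$, the completion $Q_v$ represents $c$ at every place $v$ of $F$: at $\p \in \Sigma$ by a unit rescaling of $\overline{\alpha}_\p$, at $\p \notin \Sigma$ by standard local theory of ternary forms over a DVR with unit discriminant, and at archimedean $v$ by positive definiteness.

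Finally, by Conjecture~\ref{conj:ternary} all but finitely many such $c$ are globally represented, giving $\overline{\alpha} \in M/R$ with $Q(\overline{\alpha}) = c$; lifting to $\alpha \in M \subseteq \calO$ yields $\disc(\alpha) = ac$, and we set $S \colonequals R[\alpha]$, a free quadratic $R$-order. The main technical obstacle is to verify that $S$ is integrally closed; this is a local condition, and at each $\p$ the discriminant $ac$ agrees with $\disc(\alpha_\p)$ up to a unit square in $R_{(\p)}$ (by squareclass matching at $\p \in \Sigma$, by Lemma~\ref{lem6} elsewhere, and thanks to $a$ having absorbed the problematic square factors above $2$). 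Therefore $S_{(\p)} \cong R_{(\p)}[\alpha_\p]$ is integrally closed. Since the squarefree $c$ range over infinitely many squareclasses in $F^\times/F^{\times 2}$, the quadratic $F$-algebras $F[\alpha] = F(\sqrt{ac})$ are pairwise non-isomorphic, so the orders $S$ fall into infinitely many isomorphism classes.
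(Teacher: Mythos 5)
Your strategy is essentially the one the paper uses: pull back to the rank-three form obtained by quotienting $a^{-1}\disc|_M$ by $R\cdot 1$, check the local solvability at each place including the ramified places in $T$, and feed the result into Conjecture~\ref{conj:ternary}. The observations that the form descends because $\disc$ is translation-invariant, and that $T$-definiteness gives anisotropy at finite $v \in T$ as well, match the paper's (more implicit) argument.

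The genuine gap is in the Chebotarev step. You write: ``By the Chebotarev density theorem applied to the narrow ray class field of conductor $\n$, there exist infinitely many totally positive squarefree $c \in R$ such that $cc_\p^{-1}$ lies in $R_{(\p)}^{\times 2}$ at every $\p \in \Sigma$.'' That is not something Chebotarev on $\Cl_\n(R)$ produces directly. Chebotarev supplies \emph{prime ideals} coprime to $\n$ with prescribed image in $\Cl_\n(R)$; any such prime $\pi$ has $v_\p(\pi)=0$ for every $\p\in\Sigma$. But at those $\p\in\Sigma$ where $c_\p$ has valuation $1$ (which is the generic situation by Lemma~\ref{lem6}(ii)), you need $v_\p(c)=1$ as well, so $c/c_\p\in R_{(\p)}^{\times 2}$ cannot hold for $c$ a prime unit at $\p$. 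Relatedly, ``detecting the squareclass of $c_\p$ in $R_{(\p)}^\times/R_{(\p)}^{\times 2}$'' is not meaningful when $c_\p$ is a nonunit. To repair this you need the paper's two-step construction: first apply Chebotarev to the narrow Hilbert class field to find a prime $\q$ with $\mathfrak{d}\q = mR$ principal and $m$ totally negative (which builds the needed valuations at $\Sigma$ into $m$), and only then apply Chebotarev to the ray class field of conductor $\n$ to find primes $\pi\equiv m^{-1}d_\p\ \text{mod}\ \p^{t_\p}$, setting $d=\pi m$. The resulting $a^{-1}d$ is then totally positive, squarefree, and in the correct local squareclass at every prime, which is exactly what your argument needs but does not yet justify.

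Two smaller points worth tightening: you should verify that the candidate $c$'s you ultimately produce are represented by $Q_\p$ at primes $\p\notin\Sigma$ with $\p\mid 2$ (either put all primes above $2$ into $\Sigma$, as the paper's $\n$ implicitly does, or handle them separately), and you should note that the resulting $S=R[\alpha]$ is a \emph{free} $R$-module, since $\alpha$ generates it.
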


\begin{proof}
By Lemma~\ref{lem6}, we obtain the following: an $R$-lattice $M \subseteq \Or$,  a totally negative $a \in R$, and for every prime $\p$ elements $\alpha_\p \in M_{(\p)}$ such that $R_{(\p)}[\alpha_\p]$ is integrally closed and the conditions (i)--(iii) hold.  

For each $\p$, let $d_\p \colonequals \disc(\alpha_\p)$ and  $e_\p \colonequals v_\p(d_\p)$. Define $\mathfrak{d} \colonequals \prod_{\p} \p^{e_\p}.$
Note that if $\p^e \mid aR$ then $\p^e \mid d_\p$, so $\p^e \mid \mathfrak{d}$. Therefore, $aR \mid \mathfrak{d}$.

By the Chebotarov density theorem applied to the narrow Hilbert class field, there exists a prime $\q \nmid 2\mathfrak{d}$ such that $\mathfrak{d} \q=mR$ is principal and $m$ is totally negative. In particular, $a \mid m$.
Define $t_\frakp$ as in \eqref{eqn:tpdef} and $\n$ as in \eqref{eqn:ndef}.  Applying the Chebotarov density theorem again, this time to the ray class field with conductor $\mathfrak{n}$, there exist totally positive prime elements $\pi \nmid m$ with arbitrary large absolute norm such that $\pi \equiv m^{-1} d_{\p} \pmod{\p^{t_\p}}$ for all $\p \mid 2 \mathfrak d$.  Let $d \colonequals \pi m$.  Then $a^{-1}d$ is totally positive and squarefree by construction, and there are infinitely many such choices.  

Let $d$ be such a discriminant.  We claim that $d$ is locally represented by $\disc|_{M}$.  Indeed, we have $\alpha_\p \in M_{(\p)}$ for all $\p$ by construction.  For $\p \neq \q, \pi R$, we have $d=u_\p^2 d_\p \in R_{(\frakp)}^\times$ for some $u_\p \in R_{(\p)}^\times$, so $\disc_\p(u_\p \alpha_\p)=d \in R_{(\frakp)}/R_{(\frakp)}^{\times 2}$.  For $\p=\q,\pi R$, we have $\p \nmid 2 \discrd(\Or)$, so $M_{(\p)}=\Or_{(\p)} \simeq \Ma_2(R_{(\p)})$, so in particular $\disc_\frakp \colon \calO_{(\frakp)} \to R_{(\frakp)}$ is surjective; in particular $\disc_\frakp$ represents $d$.  

Therefore $a^{-1}d$ is locally represented by $a^{-1}\disc|_{M}$.  Therefore, if the conclusion of  Conjecture~\ref{conj:ternary} holds, taking $d$ to be of sufficiently large norm, there is an element $\alpha \in M \subseteq \Or$ with $a^{-1}\disc(\alpha)=a^{-1} d$, so $\disc(\alpha)=d$. 

Finally, let $S_d \colonequals R[\alpha] \subseteq \Or$. For $\p \mid \discrd(\Or)$, we have that $(S_d)_{(\p)}=R_{(\p)}[\alpha]=R_{(\p)}[\alpha_\p]$ is maximal in its field of fractions by construction. For $\p \nmid \discrd(\Or)$, we have $v_\p(d) \leq 1$, so again $(S_d)_{(\p)}=R_{(\p)}[\alpha]$ is maximal in its field of fractions. Therefore, $S_d$ is maximal in its field of fractions and so $\Or$ is basic. 
\end{proof}

We now prove Theorem \ref{thm:thm2} in the definite case for $R$ the ring of integers of a number field.

\begin{proof}[Proof of Theorem \textup{\ref{thm:thm2}}, $B$ definite, $R$ the ring of integers of a number field]

For part (a), if $\Or_{(\p)}$ is basic for every prime $\p$ of $R$, then $\Or$ contains an integrally closed quadratic $R$-order by Lemma \ref{lem8} using Theorem \ref{thm3}.  The converse is exactly as in the proof of Theorem~\ref{thm:thm2} in the indefinite case, as given in Section~\ref{sec:strongapprox}.

  Being Bass is a local property, and local orders are basic if and only if they are Bass by Theorem~\ref{thm1}, so (b) follows from (a).
\end{proof}

\begin{corollary} \label{cor:inf-definite}
  Suppose that $B$ is $T$-definite and let $\Or \subseteq B$ be an $R$-order. If $\Or$ is basic, then $\calO$ contains infinitely many nonisomorphic integrally closed quadratic $R$-orders.
\end{corollary}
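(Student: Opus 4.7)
The plan is to mirror the proof of Corollary~\ref{cor:inf-indefinite}, combining the local--global equivalence from Theorem~\ref{thm:thm2}(a) with the existence statement supplied by Lemma~\ref{lem8}. Since the corollary concerns the $T$-definite case, we are working in the setting where the proof of Theorem~\ref{thm:thm2} given above applies, i.e.\ $R$ is the ring of integers of a number field (or more generally any setting in which Conjecture~\ref{conj:ternary} is known to hold).

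First, I would invoke Theorem~\ref{thm:thm2}(a): if $\Or$ is basic, then $\Or_{(\p)}$ is basic for every prime $\p$ of $R$. This places us exactly in the hypothesis of Lemma~\ref{lem8}, which requires only the local basic property together with the ternary representation conjecture over $R$. In the number field case, Conjecture~\ref{conj:ternary} is supplied by Theorem~\ref{thm3} (Blomer--Harcos) for $T$ the set of archimedean places; since $B$ is $T$-definite the discriminant form $-\disc$ is positive definite on the trace-zero subspace after scaling, which matches the anisotropy hypothesis of the conjecture.

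Applying Lemma~\ref{lem8} then produces infinitely many totally negative squarefree discriminants $d$ such that the corresponding free quadratic $R$-order $S_d = R[\alpha]$ with $\disc(\alpha) = d$ embeds in $\Or$ and is integrally closed. Distinct discriminants in $R/R^{\times 2}$ give nonisomorphic quadratic $R$-orders, so this provides the infinitely many pairwise nonisomorphic integrally closed free quadratic $R$-orders in $\Or$ (in particular nonisomorphic as $R$-algebras).

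The main obstacle has already been overcome in Lemma~\ref{lem8}, where the delicate work of controlling local discriminants and invoking the ternary representation conjecture is carried out; the present corollary is essentially a repackaging. The only conceptual point to note is that, unlike the indefinite case where strong approximation does the work via Lemma~\ref{lem7}, here the infinitude is inherited directly from the infinitely many choices of the prime $\pi$ in the proof of Lemma~\ref{lem8}, and hence no additional argument is needed.
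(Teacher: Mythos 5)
Your proposal matches the paper's own proof: both invoke Theorem~\ref{thm:thm2}(a) to pass from $\Or$ basic to $\Or_{(\p)}$ basic for all $\p$, then apply Lemma~\ref{lem8} to extract infinitely many nonisomorphic integrally closed quadratic $R$-orders, with the ternary representation input supplied by Theorem~\ref{thm3}. The extra commentary you give about the definiteness of the discriminant form and the infinitude coming from the choice of $\pi$ is accurate but not a departure from the paper's argument.
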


\begin{proof}
   Combine Theorem \ref{thm:thm2} in the definite case with Lemma \ref{lem8}.
\end{proof}

\section{Localizations} \label{sec:localizationsthm2}

We conclude the proof of Theorem \ref{thm:thm2} by deducing from one Dedekind domain to an arbitrary localization.  Throughout, let $R$ be a Dedekind domain with $F=\Frac R$ and let $\Or$ be a quaternion $R$-order.

\begin{lemma} \label{l:redfin}
    Let $R' \subseteq R$ be Dedekind domains such that $F\colonequals \ofrac(R)=\ofrac(R')$ is a global field.
    Let $\Or$ be an $R$-order.
    Then there is an $R'$-order $\Or' \subseteq \Or$ such that $\Or=\Or' R$ and
    \begin{itemize}
        \item $\Or'_{(\p)} = \Or_{(\p)}$ for every prime $\p$ of $R'$ with $\p R \ne R$,
        \item $\Or'_{(\p)}$ is a maximal order for every prime $\p$ of $R'$ with $\p R = R$.
    \end{itemize}
    In particular, if $\Or$ Bass, then $\Or'$ is Bass.
\end{lemma}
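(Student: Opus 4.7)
The plan is to construct $\Or'$ in two stages. First, I will build an $R'$-order $\mathcal{N} \subseteq \Or$ with $\mathcal{N} R = \Or$ via a left-order trick applied to an $R'$-lattice that spans $\Or$ over $R$; this already forces $\mathcal{N}_{(\p)} = \Or_{(\p)}$ at every prime $\p$ of $R'$ with $\p R \neq R$. Then I will enlarge $\mathcal{N}$ at the finitely many primes of $T$ where it fails to be maximal.

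Since $R' \subseteq R$ share the fraction field $F$, I identify $R$ with a localization $R'[S^{-1}]$ for a multiplicatively closed set $S \subseteq R'$, so that $T = \{\p : \p \cap S \neq \emptyset\} = \{\p : \p R = R\}$. For $\p \in T$ one has $R \otimes_{R'} R'_{(\p)} = F$ and hence $\Or \otimes_{R'} R'_{(\p)} = B$, while for $\p \notin T$ one has $R'_{(\p)} = R_{(\p R)}$ and the $R$- and $R'$-localizations of $\Or$ at $\p$ coincide.

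For the first stage, pick $R$-module generators $\beta_1 = 1, \beta_2, \ldots, \beta_n$ of $\Or$ and form the $R'$-lattice $L_0 \colonequals R'\beta_1 + \cdots + R'\beta_n \subseteq \Or$. Then $L_0 R = \Or$ and $L_{0,(\p)} = \Or_{(\p)}$ for every $\p \notin T$. Define $\mathcal{N} \colonequals \Or_L(L_0) = \{ b \in B : b L_0 \subseteq L_0 \}$, the left order of $L_0$; this is an $R'$-order in $B$, and $\mathcal{N} \subseteq L_0 \subseteq \Or$ since $1 \in L_0$. Because the left-order construction commutes with localization of finitely generated $R'$-modules, for each $\p \notin T$ we have $\mathcal{N}_{(\p)} = \Or_L(\Or_{(\p)}) = \Or_{(\p)}$, using that an order is its own left order. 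In particular $\mathcal{N} R = \Or$.

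For the second stage, any $R'$-order is maximal at all but finitely many primes of $R'$, so $\mathcal{N}_{(\p)}$ is a maximal $R'_{(\p)}$-order for all but finitely many $\p \in T$. At each of the finitely many remaining $\p \in T$, replace $\mathcal{N}_{(\p)}$ by a maximal $R'_{(\p)}$-order containing it; this finite collection of local modifications glues to a global $R'$-order $\Or'$ by the local-global correspondence for lattices. The stated local conditions hold by construction, and $\Or' \subseteq \Or$ together with $\Or' R = \Or$ follow from agreement at all primes $\mathfrak{q} = \p R$ of $R$ (for $\p \notin T$) combined with $\Or = \bigcap_\mathfrak{q} \Or_{(\mathfrak{q})}$. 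Finally, the Bass property is local: at $\p \notin T$ the order $\Or'_{(\p)} = \Or_{(\p)}$ is Bass by hypothesis, and at $\p \in T$ the order $\Or'_{(\p)}$ is maximal, hence Bass. The only nonroutine check is the compatibility of the left-order construction with localization and the identity $\Or_L(\Or_{(\p)}) = \Or_{(\p)}$; both are standard, and $T$ being possibly infinite causes no difficulty, since the enlargement in stage two touches only finitely many primes regardless of $\#T$.
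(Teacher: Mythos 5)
Your proof is correct, and it takes a genuinely different route at the key construction step. Where the paper picks $R$-module generators $\alpha_1,\ldots,\alpha_m$ of $\Or$ and clears denominators to find $d \in S$ with $d\alpha_i\alpha_j \in R'\alpha_1 + \cdots + R'\alpha_m$, so that $d\alpha_1, \ldots, d\alpha_m$ directly generate an $R'$-order $\Or''$ with $\Or''R = \Or$, you instead take the left order $\mathcal{N} = \Or_L(L_0)$ of the $R'$-lattice $L_0 = R'\beta_1 + \cdots + R'\beta_n$ with $\beta_1 = 1$. Both constructions yield an $R'$-order contained in $\Or$ whose localization at primes outside $T$ agrees with $\Or$; the second stage (enlarging to a maximal order at the finitely many primes of $T$ where one is not already maximal, then gluing) is the same in both. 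Your left-order trick is arguably slicker, since it dispenses with the explicit common-denominator computation and replaces it with the standard facts that left orders commute with localization and that an order equals its own left order; the paper's version is more hands-on but arrives at the same place.

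One point deserves a more careful justification: you write that because $R' \subseteq R$ share the fraction field $F$, you may identify $R$ with a localization $S^{-1}R'$. This is \emph{not} true for arbitrary pairs of Dedekind domains with a common fraction field --- an overring of a Dedekind domain is always an intersection of localizations at a set of primes, but it is a genuine localization $S^{-1}R'$ only under additional hypotheses (e.g.\ the class group of $R'$ being a torsion group). In the present setting the hypothesis that $F$ is a global field saves you, since then $R'$ is an overring of the ring of $S$-integers of $F$ and hence has finite class group, and the result follows (this is exactly the point where the paper cites Gabelli). You should make this dependence on the global-field hypothesis explicit rather than attributing the identification to the shared fraction field alone. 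With that caveat addressed, the argument is complete.
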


\begin{proof}
    Since $R$ and $R'$ are necessarily overrings of a global ring, their class groups are finite.
    It follows that there exists a multiplicative set $S \subseteq R'$ such that $R = S^{-1}R'$ \cite[Theorem 5.5]{Gabelli:dedekind}.

    Let $\alpha_1$, $\ldots\,$,~$\alpha_m$ be generators for the $R$-module $\Or$. 
    There exists (a common denominator) $d \in S$ such that 
    \begin{equation}
    d \alpha_i \alpha_j  \in R' \alpha_1 + \cdots + R' \alpha_m \quad\text{for all $i,j=1,\dots,m$}.
    \end{equation}
    This implies $(d\alpha_i)(d\alpha_j) \in R' d \alpha_1 + \cdots + R' d \alpha_m$.
    Thus $d\alpha_1$, $\ldots\,$,~$d\alpha_m$ generate an $R'$-order $\Or'' \subseteq \Or$ with $R\Or'' = \Or$.
    In particular, $\Or''_{(\p)} = \Or_{(\p)}$ for every prime $\p$ of $R'$ with $\p \cap S = \emptyset$.
    Let $\mathcal P$ be the set of prime ideals $\p$ of $R'$ with $\p \cap S \ne \emptyset$ for which $\Or''_{(\p)}$ is not maximal.  Since any $\p \in \mathcal P$ has $\p \mid \discrd(\calO'')$, the set $\mathcal P$ is finite.  
    By the local--global dictionary for lattices, there exists an $R'$-order $\Or'$ with $\Or'' \subseteq \Or'$ such that $\Or_{(\p')} = \Or_{(\p'')}$ for all $\p \not\in \mathcal P$ and $\Or'_{(\p)}$ is maximal for $\p \in \mathcal P$.
    Since $\Or_{(\p')} = \Or_{(\p'')} \subseteq \Or_{(\p)}$ for all primes $\p$ of $R$ with $\p \cap S = \emptyset$, we still have $\Or' \subseteq \Or$.
    
    Since being Bass is a local property, and at all $\frakp$ of $R$ we have either $\calO'_{(\frakp)}$ maximal or equal to $\calO_{(\frakp)}$, the order $\Or'$ is Bass.
\end{proof}

\begin{lemma} \label{l:inf-inf}
    Suppose $F = \ofrac(R)$ is a global field, and 
    let $T$ be the \textup{(}nonempty\textup{)} set of places of $F$ such that $R = R_{(T)}$.
    Suppose $\# T = \infty$.
    If $R$ is Bass, there exist infinitely many nonisomorphic quadratic $R$-orders $S$ that embed into $\Or$.
\end{lemma}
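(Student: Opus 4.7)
The plan is to reduce this infinite-$T$ statement to the finite-$T$, $T$-indefinite case already settled in Corollary~\ref{cor:inf-indefinite}. The key observation is that since $B$ is ramified at only finitely many places of $F$ while $T$ is infinite, at least one place $v_0 \in T$ is unramified in $B$.

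First, I would fix such a $v_0$ and choose a finite nonempty subset $T' \subseteq T$ containing $v_0$ (together with all archimedean places of $F$ when $F$ is a number field, so that $R' \colonequals R_{(T')}$ is a Dedekind domain of the kind considered in Sections~\ref{sec:strongapprox}--\ref{sec:weak}). Then $R' \subseteq R$ is a Dedekind domain with $\ofrac(R') = F$, and $R$ is itself a further localization of $R'$.

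Next, I would invoke Lemma~\ref{l:redfin} to produce a Bass $R'$-order $\Or' \subseteq \Or$ with $\Or' R = \Or$. Since $v_0 \in T'$ is unramified in $B$, the algebra $B$ is $T'$-indefinite, so Corollary~\ref{cor:inf-indefinite} applies to $\Or'$ and yields infinitely many pairwise nonisomorphic integrally closed quadratic $R'$-orders $\{S'_n\}_{n \ge 1}$ with $S'_n \hookrightarrow \Or'$. For each $n$, I would form the quadratic $R$-order $S_n \colonequals S'_n \otimes_{R'} R$ inside $\Or' R = \Or$.

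The only step requiring some care---and the one I expect to be the main (if mild) obstacle---is verifying that the $S_n$ remain pairwise nonisomorphic after this base change. Here I would exploit the fact that each $S'_n$, being integrally closed, is uniquely determined up to $R'$-isomorphism by the quadratic $F$-algebra $K_n \colonequals FS'_n$; hence the $K_n$ are pairwise nonisomorphic as $F$-algebras. Since $FS_n = FS'_n = K_n$, the orders $S_n$ have pairwise nonisomorphic total quotient $F$-algebras, and are therefore themselves pairwise nonisomorphic as $R$-algebras. With this in hand, the proof is complete: the real content has been pushed entirely into Lemma~\ref{l:redfin} and Corollary~\ref{cor:inf-indefinite}, while the passage from $R'$ back to $R$ is essentially bookkeeping.
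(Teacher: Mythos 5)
Your proposal is correct and follows essentially the same route as the paper: pick an unramified $v_0 \in T$, choose a finite $T' \ni v_0$ containing the archimedean places, descend to a Bass $R_{(T')}$-order $\Or' \subseteq \Or$ via Lemma~\ref{l:redfin}, and invoke the already-proven indefinite finite-$T$ case. The paper applies Lemma~\ref{lem7} directly to the locally basic order $\Or'$ (rather than Corollary~\ref{cor:inf-indefinite}, which requires first noting $\Or'$ is basic via Theorem~\ref{thm:thm2}(b) in the indefinite case), and leaves the final base change from $R'$-orders to $R$-orders implicit; your explicit verification that the $S_n = S'_n \otimes_{R'} R$ remain integrally closed and pairwise nonisomorphic---using that integral closure commutes with localization and that an integrally closed quadratic order is determined up to isomorphism by its total quotient $F$-algebra---is exactly the bookkeeping the paper elides.
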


\begin{proof}
    Since $T$ is infinite, there exists a place $v \in T$ such that $B_v$ is unramified.
    Let $T'$ be a finite set of places containing $v$ and all archimedean places of $F$.
    By Lemma~\ref{l:redfin} there exists an $R_{(T')}$-order $\Or'$ such that $\Or'R=\Or$ and $\Or'$ is Bass.
    Thus $\Or'$ is locally Bass and hence locally basic by Theorem~\ref{thm:thm1}.
    Since $\Or'$ is $T$-indefinite, Lemma~\ref{lem7} implies the claim.
\end{proof}

\begin{lemma} \label{red:Rporder}
    Let $R' \subseteq R$ be Dedekind domains with $\ofrac(R') = \ofrac(R)$ a global field.
    Suppose that every $R'$-order that is Bass is basic.
    Then every $R$-order that is Bass is basic.
\end{lemma}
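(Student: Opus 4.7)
The plan is to reduce from $R$-orders down to $R'$-orders by means of Lemma~\ref{l:redfin}, apply the hypothesis to obtain an integrally closed quadratic $R'$-suborder, and then push this suborder back up to $R$.

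First I would take a Bass $R$-order $\Or$ in a quaternion algebra $B$ over $F$. By Lemma~\ref{l:redfin}, there is an $R'$-order $\Or' \subseteq \Or$ with $\Or' R = \Or$ such that $\Or'$ is also Bass. The hypothesis then yields that $\Or'$ is basic, so it contains an integrally closed quadratic $R'$-order $S' \subseteq \Or'$. I would then set $S \colonequals S' R \subseteq \Or' R = \Or$. Since $S'$ is locally free of rank $2$ over $R'$, this $S$ is a quadratic $R$-order, and it remains only to verify that $S$ is integrally closed in its total quotient ring $FS = FS'$.

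Integral closedness is a local property, so I would check it prime by prime. For a nonzero prime $\p$ of $R$, I would first observe that the contraction $\p' \colonequals \p \cap R'$ is a nonzero prime of $R'$: any nonzero $\alpha \in \p$ may be written $\alpha = r'/s'$ with $r', s' \in R'$ and $r' \neq 0$, so that $r' = s' \alpha \in \p \cap R' = \p'$. The key observation is then that $R'_{(\p')} \subseteq R_{(\p)}$ is an inclusion of DVRs with common fraction field $F$; as the only overrings of a DVR inside its fraction field are itself and $F$, and $R_{(\p)} \neq F$, I conclude $R_{(\p)} = R'_{(\p')}$. Consequently $S_{(\p)} = S'_{(\p')}$, which is integrally closed because $S'$ is. Hence $S$ is integrally closed at every prime and therefore globally, so $\Or$ is basic.

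The main (slight) subtlety is precisely this identification of local rings at primes of $R$ with local rings at the corresponding primes of $R'$; once that is in hand, the lemma follows immediately from Lemma~\ref{l:redfin} and the hypothesis, with no further appeal to the global structure.
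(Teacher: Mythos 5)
Your proposal is correct and follows essentially the same route as the paper: the paper's proof is the one-line ``As in Lemma~\ref{l:inf-inf},'' which means exactly this---apply Lemma~\ref{l:redfin} to produce a Bass $R'$-order $\Or'$ with $\Or'R = \Or$, use the hypothesis to get an integrally closed quadratic $R'$-order $S' \subseteq \Or'$, and then pass to $S = S'R \subseteq \Or$. Your verification that $S$ is integrally closed via the identification $R'_{(\p\cap R')} = R_{(\p)}$ (using that a DVR admits no proper overring inside its fraction field other than the field itself) is a clean way to spell out a step the paper leaves implicit.
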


\begin{proof}
    As in Lemma~\ref{l:inf-inf}.
\end{proof}

\begin{theorem} \label{thm:global}
Suppose that $F = \ofrac(R)$ is a global field.
Let $T$ be the nonempty \textup{(}possibly infinite\textup{)} set of places such that $R = R_{(T)}$.
Let $\Or$ be an $R$-order.
Suppose that one of the following conditions holds.
\begin{enumroman}
    \item $F$ is a number field.
    \item $\# T < \infty$ and $B$ is $T$-indefinite.
    \item $\# T = \infty$.
\end{enumroman}
Then the following statements hold.
\begin{enumalph}
\item $\calO$ is basic if and only if the localization $\calO_{(\frakp)}$ is basic for all primes $\frakp$ of $R$.  
\item $\calO$ is Bass if and only if $\calO$ is basic.  
\end{enumalph}
\end{theorem}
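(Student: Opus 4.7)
The plan is to derive (b) from (a) using the same reduction as in the earlier proofs of Theorem~\ref{thm:thm2}: the Bass property is local, and local Bass is equivalent to local basic by Theorem~\ref{thm1}, so (a) yields (b) automatically. For the easy direction of (a), if $S \subseteq \Or$ is an integrally closed quadratic $R$-order then $S_{(\frakp)} \subseteq \Or_{(\frakp)}$ is again integrally closed, since integral closure localizes; this direction needs no hypothesis on $T$.

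The three hard-direction cases can be handled in sequence. Case (ii) is exactly Theorem~\ref{thm:thm2} as proved in Section~\ref{sec:strongapprox}. Case (i), where $F$ is a number field, splits according to whether $B$ is $T$-indefinite (Section~\ref{sec:strongapprox}) or $T$-definite (Section~\ref{sec:weak}, which uses Theorem~\ref{thm3} of Blomer--Harcos); both subcases are already established in the excerpt, so no new argument is needed for (i) and (ii).

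The essentially new case is (iii), where $\#T = \infty$. Here I would first use Theorem~\ref{thm1} and the locality of the Bass property to convert the hypothesis ``$\Or$ is locally basic'' into ``$\Or$ is Bass'', and then apply Lemma~\ref{l:inf-inf} to produce (in fact, infinitely many) integrally closed quadratic $R$-orders embedded in $\Or$, giving $\Or$ basic. Unwinding Lemma~\ref{l:inf-inf}, the argument chooses a finite $T' \subseteq T$ containing all archimedean places of $F$ and at least one $v \in T$ at which $B$ is unramified (which exists because $T$ is infinite while $B$ is ramified at only finitely many places), uses Lemma~\ref{l:redfin} to descend to an $R_{(T')}$-order $\Or' \subseteq \Or$ with $\Or' R = \Or$ that is still Bass at every prime of $R_{(T')}$, and invokes case (ii) for $\Or'$; an integrally closed quadratic $R_{(T')}$-order $S' \hookrightarrow \Or'$ then extends to $S \colonequals S' R \hookrightarrow \Or$, which is an integrally closed quadratic $R$-order by a local check at each prime of $R$.

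The main obstacle is case (iii): the techniques of Sections~\ref{sec:strongapprox} and~\ref{sec:weak} rely on finiteness of ray class groups modulo a fixed conductor and on ternary representation theorems, neither of which is formulated for a base ring with infinitely many inverted places. Lemma~\ref{l:redfin} is precisely the device that sidesteps this by trading the infinite $T$ for a finite $T'$ with $B$ still $T'$-indefinite; once such a $T'$ is available, the previously established cases take over and the argument becomes purely formal.
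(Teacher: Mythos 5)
Your overall structure matches the paper's: you reduce one of the two assertions to the other via locality of the Bass property together with Theorem~\ref{thm1}, and you then dispatch the hard direction case by case using the material from Sections~\ref{sec:strongapprox}--\ref{sec:localizationsthm2}. (You derive (b) from (a); the paper derives (a) from (b); these are interchangeable given locality of Bass and Theorem~\ref{thm1}.) Your treatments of the easy direction of (a), of case (ii), and of case (iii) via Lemma~\ref{l:inf-inf} and Lemma~\ref{l:redfin} are all correct and match the paper.

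There is a genuine gap in your handling of case (i). You assert that when $F$ is a number field and $B$ is $T$-definite, the claim is ``already established'' by Section~\ref{sec:weak}. But the proof in Section~\ref{sec:weak} is stated and carried out only for $R$ equal to the full ring of integers of the number field, i.e., for $T$ exactly the set of archimedean places. If $T$ is a larger finite set (and $B$ is ramified at every place of $T$, hence remains $T$-definite), Section~\ref{sec:weak} does not directly apply to an $R_{(T)}$-order. One must descend: take $T'$ to be the archimedean places, apply the ring-of-integers result to an $R_{(T')}$-order obtained from $\Or$, and then pass back up to $R_{(T)}$. This is exactly the content of Lemma~\ref{red:Rporder}, which the paper invokes at precisely this point; the mechanism is the same one you use for case (iii), but you only deploy it there. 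Adding the appeal to Lemma~\ref{red:Rporder} (or replicating the Lemma~\ref{l:redfin} argument) in the finite-$T$, $T$-definite, number-field subcase closes the gap.
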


\begin{proof}
    Being Bass is a local property, and local orders are basic if and only if they are Bass by Theorem~\ref{thm1}.
    Thus it suffices to show (b).
    Basic orders are Bass by Proposition~\ref{prop:onedir}, and we are left to show that an $R$-order $\Or$ that is Bass is basic.
    
    Suppose first that $\# T < \infty$.
    If $B$ is $T$-indefinite, the claim follows from Theorem~\ref{thm:thm2} in the indefinite case, as proved in Section~\ref{sec:strongapprox}.
    Suppose that $F$ is a number field and $B$ is $T$-definite.
    Let $T'$ be the set of all archimedean places of $F$.
    Then $R_{(T')}$ is the ring of integers of $F$, and the claim holds by the proof of Theorem~\ref{thm:thm2} for the definite case in Section~\ref{sec:weak} together with Theorem~\ref{thm3}.
    Lemma~\ref{red:Rporder} shows that the result also holds for $R_{(T)}$.
    
    Finally, if $\#T=\infty$, Lemma~\ref{l:inf-inf}.
\end{proof}

Theorem \ref{thm:thm2} is now immediate.

\begin{proof}[Proof of Theorem~\textup{\ref{thm:thm2}}]
    Restrict Theorem~\ref{thm:global} to the case that $F$ is a number field.
\end{proof}

Again, our arguments actually yield a little bit more.

\begin{corollary} \label{cor:infinitelymany}
If one of the conditions in Theorem~\textup{\ref{thm:global}}\textup{(i)}--\textup{(iii)} holds, and $\calO$ is basic, then $\calO$ contains infinitely many nonisomorphic integrally closed free quadratic $R$-orders.
\end{corollary}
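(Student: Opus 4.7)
The plan is to carry out a case analysis that mirrors the one used in the proof of Theorem~\ref{thm:global}, deferring to the ``infinitely many'' variants already established in each regime. Specifically, I would split into the three conditions of Theorem~\ref{thm:global}: (iii) $\#T=\infty$, handled by Lemma~\ref{l:inf-inf}; (ii) $\#T<\infty$ with $B$ being $T$-indefinite, handled by Corollary~\ref{cor:inf-indefinite}; and the remaining subcase of (i), namely $F$ a number field and $B$ being $T$-definite, which I would reduce to Corollary~\ref{cor:inf-definite} via Lemma~\ref{l:redfin}.

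For case (ii) the conclusion is immediate from Corollary~\ref{cor:inf-indefinite}: the orders produced via Lemma~\ref{lem4} are parameterized by classes $d \in R/R^{\times 2}$ and so are automatically free as $R$-modules. For case (iii), Lemma~\ref{l:inf-inf} supplies infinitely many nonisomorphic quadratic $R$-orders embedding into $\calO$; tracing through its proof, these arise as $R[\alpha]$ for elements $\alpha$ that generate integrally closed free quadratic $R'$-orders in a Bass sub-order $\Or' \subseteq \Or$ (with $R'=R_{(T')}$ for a suitable finite $T' \subseteq T$ containing an unramified place of $B$). I would then check that $R[\alpha] = R'[\alpha] \otimes_{R'} R$ remains integrally closed in its total quotient ring: freeness is preserved by base change, and integral closedness reduces to a localization check at each prime $\p$ of $R$, where $R_{(\p)}$ coincides with $R'_{(\p \cap R')}$ so that the hypothesis on $R'[\alpha]$ transfers directly.

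For the final subcase of (i), with $F$ a number field, $\#T<\infty$, and $B$ being $T$-definite, I would apply Lemma~\ref{l:redfin} with $R'=\calO_F$ the ring of integers of $F$ to obtain a Bass $R'$-suborder $\Or' \subseteq \Or$ such that $\Or' R = \Or$. Corollary~\ref{cor:inf-definite} (which is the output of Lemma~\ref{lem8} combined with Theorem~\ref{thm3}) then produces infinitely many pairwise nonisomorphic integrally closed free quadratic $R'$-orders $S' \subseteq \Or'$. Setting $S \colonequals S' \otimes_{R'} R \subseteq \Or$ and performing the same localization-based verification as above yields integrally closed free quadratic $R$-orders in $\Or$; nonisomorphy of sufficiently many of them follows by tracking discriminants (two such orders with distinct discriminants in $R/R^{\times 2}$ remain distinct after the base change $R' \hookrightarrow R$, possibly at the cost of discarding finitely many collisions).

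The main obstacle is not mathematical depth but rather careful bookkeeping across the three regimes: each of Lemma~\ref{l:inf-inf}, Corollary~\ref{cor:inf-indefinite}, and Corollary~\ref{cor:inf-definite} is stated with a slightly different emphasis on freeness versus integral closedness, and one must verify that the base-change operation $S' \mapsto S'R$ preserves both properties and maintains infinitely many isomorphism classes. Once these compatibility checks are in hand, no further global arguments are needed, and the corollary follows by assembling the three cases.
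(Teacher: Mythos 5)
Your proposal matches the paper's proof in structure and substance: the paper's argument is literally a one-sentence citation of Corollaries~\ref{cor:inf-indefinite} and \ref{cor:inf-definite} (for $\#T<\infty$) and Lemma~\ref{l:inf-inf} (for $\#T=\infty$), exactly the three results you invoke and in the same case decomposition. Your added detail on the base-change steps (freeness, integral closedness, and nonisomorphy under $R'\hookrightarrow R$, plus the use of Lemma~\ref{l:redfin} in the $T$-definite subcase) merely makes explicit what the cited results implicitly rely on, and is correct.
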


\begin{proof}
    Corollaries~\ref{cor:inf-indefinite} and \ref{cor:inf-definite} for  $\#T < \infty$ and Lemma~\ref{l:inf-inf} for $\# T = \infty$.
\end{proof}

\end{document}